\def\draft{n}
\def\printname#1{
        \if\draft y
                \smash{\makebox[0pt]{\hspace{-0.5in}
                        \raisebox{8pt}{\tt\tiny #1}}}
        \fi
}
\def\lbl#1{\label{#1}\printname{#1}}
\def\biblbl{\bibitem}
                        \theoremstyle{plain}
\newtheorem{theorem}{Theorem}[section]
\newtheorem{lemma}[theorem]{Lemma}
\newtheorem{corollary}[theorem]{Corollary}
\newtheorem{proposition}[theorem]{Proposition}
\theoremstyle{definition}
\newtheorem{remark}[theorem]{Remark}
      \def\nc{\newcommand}
   \nc\FI[2]{\begin{figure}
    \begin{center}\input{#1.pstex_t}\end{center}
    \caption{#2}
    \lbl{#1}
  \end{figure}}
\nc\FIG[3]{\begin{figure}
    \includegraphics[#3]{#1.eps}
    \caption{#2}
    \lbl{fig:#1}
    \end{figure}}
\nc\FF[3]{\begin{figure}
    \includegraphics[#3]{#1.eps}
    \caption{#2}
    \lbl{#1}
    \end{figure}}
    \nc\FIGc[3]{\begin{figure}[htpb]
    \includegraphics[height=#3]{#1.eps}
    \caption{#2}
    \lbl{fig:#1}
    \end{figure}}
    \nc\FIGh[3]{\begin{figure}[htpb]
    \includegraphics[height=#3]{#1.eps}
    \caption{#2}
    \lbl{fig:#1}
    \end{figure}}
\newcommand\incl[2]{{\includegraphics[height=#1]{#2.eps}}}
\def\BA{\mathbb A}
\def\BC{\mathbb C}
\def\BN{\mathbb N}
\def\BZ{\mathbb Z}
\def\BQ{\mathbb Q}
\def\CZ{\mathcal Z}
\def\la{\langle}
\def\ra{\rangle}
\def\al{\alpha}
\def\ve{\varepsilon}
\def\be { \begin{equation} }
\def\ee { \end{equation} }
\newcommand\no[1]{}
\def\hD{{\hat D}}
\def\hatP{{\widehat{\partial}}}
\def\bT{\mathbb T}
\def\tS{\tilde \Sigma}
\def\cS{\mathscr S}
\def\Mat{\mathrm{Mat}}
\def\bk{\mathbf k}
\def\bl{\mathbf l}
\def\bn{\mathbf n}
\def\oP{{\mathring P}}
\def\cP{\mathcal P}
\def\tD{\D \cup \Hu}
\def\Id{\mathrm{Id}}
\def\D{\Delta}
\def\embed{\hookrightarrow}
\def\sX{\mathfrak X}
\def\tX{\tilde {\sX}}
\def\inn{{\mathrm{in}}}
\def\fm{\mathfrak m}
\def\cH{\mathcal H}
\def\cT{\mathcal T}
\def\Col{\mathrm{Col}}
\def\embed{\hookrightarrow}
\def\tX{\tilde{\cX}}
\def\vp{\varphi}
\def\SM{(\Sigma,\cP)}
\def\pS{\partial \Sigma}
\def\id{\mathrm{id}}
\def\Dess{\D_{\mathrm{ess}}}
\def\pM{\partial M}
\def\cN{\mathcal N}
\def\MN{(M,\cN)}
\def\tiX{\widetilde {\sX}}
\def\tcS{\widetilde \cS}
\def\tF{\tilde F}
\def\tPhi{\widetilde{\Phi}}
\def\XD{\sX(\D)}
\def\vpD{\vp_\D}
\def\vpDp{\vp_{\D'}}
\def\Hu{{\cH}}
\def\Hm{\cH_\bullet}
\def\ZD{\CZ(\D)}
\def\LMN{\cT\MN}
\def\fr{\operatorname{fr}}
\begin{document}

\title[Skein algebra]{On Kauffman bracket skein modules of marked 3-manifolds and the Chebyshev-Frobenius homomorphism}

\author[Thang  T. Q. L\^e]{Thang  T. Q. L\^e}
\address{School of Mathematics, 686 Cherry Street,
 Georgia Tech, Atlanta, GA 30332, USA}
\email{letu@math.gatech.edu}
\author[Jonathan Paprocki]{Jonathan Paprocki}
\address{School of Mathematics, 686 Cherry Street,
 Georgia Tech, Atlanta, GA 30332, USA}
\email{jon.paprocki@gatech.edu}


\thanks{
2010 {\em Mathematics Classification:} Primary 57N10. Secondary 57M25.\\
{\em Key words and phrases: Kauffman bracket skein module, Chebyshev homomorphism.}}

\begin{abstract}
In this paper we study the skein algebras of marked surfaces and the skein modules of marked 3-manifolds. Muller showed that skein algebras of totally marked surfaces may be embedded in easy to study algebras known as quantum tori. We first extend Muller's result to permit marked surfaces with unmarked boundary components. The addition of unmarked components allows us to develop a surgery theory which enables us to extend the Chebyshev homomorphism of Bonahon and Wong between skein algebras of unmarked surfaces to a ``Chebyshev-Frobenius homomorphism'' between skein modules of marked 3-manifolds. We show that the image of the Chebyshev-Frobenius homomorphism is either transparent or skew-transparent. In addition, we make use of the Muller algebra method to calculate the center of the skein algebra of a marked surface when the quantum parameter  is not a root of unity.
\end{abstract}

\maketitle

\section{Introduction}

\subsection{Skein modules of 3-manifolds} In this paper we study the skein modules of marked 3-manifolds, which have connections to many important objects like character varieties, the Jones polynomial,  Teichm\"uller spaces, and cluster algebras. Skein modules serve as a bridge between classical and quantum topology, see e.g. \cite{Bullock,Ka,Le,Le5,Marche,Turaev}.

By a {\em marked 3-manifold} we mean a pair $(M,\cN)$, where $M$ is an oriented  3-manifold with (possibly empty) boundary $\partial M$ and a 1-dimensional oriented submanifold $\cN \subset \partial M$ such that each connected component of $\cN$ is diffeomorphic to the  open interval $(0,1)$. 
\def\Sx{\cS_\xi}
By an {\em $\cN$-tangle in $M$} we mean a compact 1-dimensional non-oriented submanifold $T$ of $M$ equipped with a normal vector field, called the {\em framing}, such that $\partial T= T \cap \cN$ and the framing at each boundary point of $T$ is a positive tangent vector of $\cN$. Two $\cN$-tangles are {\em $\cN$-isotopic} if they are isotopic in the class of $\cN$-tangles. 

For a non-zero complex number $\xi$ the skein module $\Sx\MN$ is the $\BC$-vector space freely spanned by $\cN$-isotopy classes of $\cN$-tangles modulo the local relations described in Figure \ref{fig:skein}. For a detailed explanation  see Section \ref{s.skeinmodules}.

\FIGc{skein}{Defining relations of skein module. From left to right: skein relation, trivial knot relation, and trivial arc relation. Here $q=\xi$.}{1.5cm}
When $\cN=\emptyset$ we don't need the third relation (the trivial arc relation), and in this case the skein module was introduced independently by J. Przytycki \cite{Przy} and V. Turaev \cite{Turaev,Turaev2}. The skein relation and the trivial knot relations were introduced by Kauffman \cite{Ka} in his study of the Jones polynomial.

The trivial arc relation was introduced by G. Muller in \cite{Muller} where he defined the skein module of marked surfaces, which was then generalized to marked 3-manifolds by the first author \cite{Le3}. It turns out that the extension to include the marked set $\cN$ on the boundary $\partial M$ makes the study of the skein modules much easier both in technical and conceptual perspectives.

\subsection{The Chebyshev-Frobenius homomorphism} \lbl{sec.Che}

When $\cN=\emptyset$,  Bonahon and Wong \cite{BW1} constructed a remarkable map between two skein modules, called the Chebyshev homomorphism, which plays an important role in the theory. One main result of this paper is to extend Bonahon and Wong's Chebyshev homomorphism to the case where $\cN\neq \emptyset$ and to give a conceptual explanation of its existence from basic facts about $q$-commutative algebra.

A 1-component $\cN$-tangle $\al$ is  diffeomorphic to either the circle $S^1$ or  the closed interval $[0,1]$; we call $\al$ an {\em $\cN$-knot} in the first case and {\em $\cN$-arc} in the second case.
For a 1-component $\cN$-tangle $\al$ and an integer $k\ge 0$, write $\al^{(k)} \in \cS_\xi(M)$ for the {\em $k$th framed power of $\al$} obtained by stacking $k$ copies of $\al$ in a small neighborhood of $\al$ along the direction of the framing of $\al$. Given a polynomial $P(z) = \sum c_i z^i \in \BZ[z]$, the {\em threading} of $\al$ by $P$ is given by $P^{\fr}(\al) = \sum c_i \al^{(i)} \in \cS_\xi(M)$.

The Chebyshev polynomials of type one $T_n(z) \in \BZ[z]$ are defined recursively as
\be 
T_0(z)=2, \ \ \ T_1(z)=z, \ \ \ T_n(z) = zT_{n-1}(z)-T_{n-2}(z), \ \ \forall n \geq 2. \lbl{eq.Che}
\ee

The extension of Bonahon and Wong's result to marked 3-manifolds is the following.

\begin{theorem} [see Theorem \ref{t.ChebyshevFrobenius}] \lbl{thm.1}

Suppose
 $(M,\cN)$ is a marked 3-manifold and $\xi$ is a complex root of unity. Let $N$ be the order of $\xi^4$, i.e. the smallest positive integer such that $\xi^{4N}=1$. Let $\ve =\xi^{N^2}$. Then there exists a unique $\BC$-linear map $\Phi_\xi: \cS_\ve(M,\cN) \to \cS_\xi(M,\cN)$ such that for any $\cN$-tangle $T = a_1 \cup \cdots \cup a_k \cup \al_1 \cup \cdots \cup \al_l$ where the $a_i$ are $\cN$-arcs and the $\al_i$ are $\cN$-knots,

\begin{align*}
\Phi_\xi(T) & = a_1^{(N)} \cup \cdots \cup a_k^{(N)} \cup (T_N)^{\fr}(\al_1) \cup \cdots \cup (T_N)^{\fr}(\al_l) \quad \text{in }\Sx\MN\\
&: = \sum_{0\le j_1, \dots, j_l\le N} c_{j_1} \dots c_{j_l}  a_1^{(N)}
 \cup \dots \cup  a_k^{(N)} \cup \, \al_1^{(j_1)} \cup \cdots \cup \al_l^{(j_l)} \quad \text{in }\Sx\MN,
\end{align*}
where $T_N(z) = \sum_ {j=0}^N c_j z^j$.
\end{theorem}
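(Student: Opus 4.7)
The plan is to establish the theorem in two stages: first reduce the claim to a local check, exploiting the locality of the defining skein relations, and then carry out that local check by transporting the formula through the Muller-type embedding into a quantum torus that the earlier sections of the paper make available. Uniqueness is immediate: since $\cN$-isotopy classes of $\cN$-tangles span $\cS_\ve\MN$, any $\BC$-linear map with the prescribed values on tangles is determined. For existence, what must be shown is that the formula descends to a map out of $\cS_\ve\MN$; equivalently, that it sends each defining relation of $\cS_\ve\MN$ (the skein relation, the trivial knot relation, and the trivial arc relation, all with $q = \ve$) to a valid identity in $\cS_\xi\MN$. Isotopy invariance is built into the notation, since the framed powers $a_i^{(N)}$ and the threadings $(T_N)^{\fr}(\al_j)$ are by construction invariants of the $\cN$-isotopy class of each component. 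Thus the entire content of the theorem is the verification of three local identities.

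The next step is to exploit the locality of these identities: each defining relation takes place in a small ball (for the skein and trivial knot relations) or in a small half-ball attached to $\cN$ (for the trivial arc relation). So it suffices to verify each identity after embedding the relevant local picture into a convenient model marked surface $\Sigma$, thickened to $\Sigma \times [-1, 1]$, for which both $\cS_\ve(\Sigma)$ and $\cS_\xi(\Sigma)$ are well-understood. Concretely I would take $\Sigma$ to be a disk with one or two marked arcs on the boundary, together, where necessary, with an unmarked boundary circle to accommodate $\cN$-knots; both target and source tangles in the local picture embed canonically. This reduces the theorem to an algebra statement about the prescribed map on the skein algebras of these model marked surfaces.

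For these model surfaces the strategy is to use the Muller-type embedding $\cS_q(\Sigma) \hookrightarrow \bT_q$ into an appropriate quantum torus $\bT_q = \bT_q(\Lambda)$ associated with a triangulation of $\Sigma$. In $\bT_q$ each generator $X_i$ satisfies the standard $q$-commutation relations, and the classical Frobenius $X_i \mapsto X_i^N$ is a well-defined algebra homomorphism $\bT_\ve \to \bT_\xi$: the cross-commutation scalars $q^{2\Lambda_{ij}}$ become $N$-th powers after Frobenius, which land in $\{\pm 1\}$ because $\xi^{4N}=1$, and the normalization $\ve = \xi^{N^2}$ is exactly what is needed to reconcile the quadratic twist in the quantum torus. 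On the skein side, the Frobenius of a generator corresponds to the $N$-fold framed power of an $\cN$-arc, and a direct computation in the quantum torus identifies $(T_N)^{\fr}(\al)$ with the Frobenius image of an $\cN$-knot (this is precisely the identity already known in the unmarked Bonahon--Wong setting). Hence $\Phi_\xi$ is the restriction of the quantum torus Frobenius to the image of the skein algebra, and being an algebra homomorphism there, it automatically respects all three skein relations.

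The hardest step will be the reduction to the totally marked case, i.e.\ the surgery argument indicated in the abstract. Muller's embedding is only available when every boundary component of $\Sigma$ carries a marked point, whereas $\cN$-knots by definition live away from $\cN$ and so produce unmarked boundary circles when one tries to excise a tubular neighborhood. The plan is to follow the strategy outlined in the paper's introduction: attach an auxiliary marked arc to each unmarked boundary component, verify that the corresponding map on the enlarged skein algebra descends to one on the original (using a transparency property of $N$-th framed powers crossing the auxiliary arc, which is the precursor of the transparency/skew-transparency statement advertised in the abstract), and only then invoke Muller's quantum torus embedding. Granting that surgery machinery, the three local identities reduce to a straightforward $q$-binomial calculation of the Frobenius $X \mapsto X^N$ in $\bT_q$, and the theorem follows.
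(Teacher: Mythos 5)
Your overall architecture (uniqueness from spanning, existence by checking the three relations, quantum-torus Frobenius via the Muller embedding, surgery for knots) matches the paper's, but there are two genuine gaps in the reduction steps. First, the locality reduction is not valid as stated: the map sending $T$ to $a_1^{(N)}\cup\cdots\cup (T_N)^{\fr}(\al_l)$ is \emph{not} a local operation, since threading and framed powers act on entire components of $T$. A skein relation element $T-\ve T_+-\ve^{-1}T_-$ therefore cannot be checked inside a small model disk containing only the crossing ball; one must work in a neighborhood of the \emph{whole} tangle (which is a thickened surface), and even there the verification splits according to a global property, namely whether the two strands in the ball belong to the same or to different components of $T$. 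The same-component case does not follow from the different-component case by locality; the paper handles it separately by isotoping $T_+$ to create a bigon and using the kink relation. Your plan as written would silently assume the two strands always lie on distinct components.

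Second, the assertion that ``a direct computation in the quantum torus identifies $(T_N)^{\fr}(\al)$ with the Frobenius image of an $\cN$-knot'' hides the hardest part of the proof. A knot $\al$ is not a generator of the quantum torus, and one must first show that the triangulation-dependent Frobenius restricts to a triangulation-independent map on the skein algebra (this is where $N=\ord(\xi^4)$ actually enters, via the Gauss binomial formula applied to the flip formula --- not, as you suggest, to make the cross-commutation scalars work out, since the quantum-torus Frobenius exists for every $N$ and every $\xi$). One then needs the annulus computation $T_N(X+Y+Y^{-1})=X^N+Y^N+Y^{-N}$, functoriality to handle homologically nontrivial knots, and a genuine surgery step (plugging a disk into an unmarked boundary component and tracking the induced quotient of surgery algebras) to reach null-homologous knots. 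Your proposed surgery --- attaching auxiliary marked arcs and invoking a transparency property of $N$-th framed powers --- is both different from this and risky: transparency of the image of $\Phi_\xi$ is a \emph{consequence} of the theorem in this paper, so using it as an input would need an independent proof to avoid circularity. As it stands, the plan names the right tools but does not supply the arguments that make the knot case and the same-component skein case go through.
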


We call $\Phi_\xi$ the {\em Chebyshev-Frobenius homomorphism}. Our construction and proof are independent of the previous results of \cite{BW1} (which requires the quantum trace map \cite{BW0}) and \cite{Le2}. This is true even for the case $\cN=\emptyset$. 

Note that when $T$ has only arc components, then $\Phi_\xi(T)$ is much simpler as it can be defined using monomials and no Chebyshev polynomials are involved. The main strategy we employ is to understand the Chebyshev-Frobenius homomorphism for this simpler case, then show that the knot components case can be reduced to this simpler case.

\subsection{Skein algebras of marked surfaces} In proving Theorem \ref{thm.1} we prove several results  on the skein algebras of marked surfaces which are of independent interest. 
The first  is an extension of the a result of Muller \cite{Muller} from totally marked surfaces to marked surfaces.

By a {\em marked surface} we mean a pair $\SM$, where $\Sigma$ is an oriented surface with (possibly empty) boundary $\pS$ and a finite set $\cP\subset \pS$. Define $\Sx\SM= \Sx\MN$, where $M = \Sigma \times (-1,1)$ and $\cN=\cP \times (-1,1)$. Given two $\cN$-tangles $T, T'$ define the product $T T'$ by stacking $T$ above $T'$. This gives $\Sx\SM$ an algebra structure, which was first introduced by Turaev \cite{Turaev} for the case $\cP=\emptyset$ in connection with the quantization of the Atiyah-Bott-Weil-Petersson-Goldman symplectic structure of the character variety. The algebra $\Sx\SM$ is closely related to the quantum Teichm\"uller space and the quantum cluster algebra of the surface. If $\beta$ is an {\em unmarked boundary component of $\pS$}, i.e. $\beta \cap \cP=\emptyset$, then $\beta$ is a central element of $\Sx\SM$. Thus $\Sx\SM$ can be considered as an algebra over $\BC[\cH]$, the polynomial algebra generated by $\cH$ which is the set of all unmarked boundary components of $\pS$.

{\em A $\cP$-arc} is a path $a: [0,1]\to \Sigma$ such that $a(0), a(1) \in \cP$ and $a$ maps $(0,1)$ injectively into $\Sigma \setminus \cP$. A {\em quasitriangulation} of $\SM$   is a collection $\D$ of $\cP$-arcs which cut $\Sigma$ into triangles and holed monogons (see \cite{Penner} and Section \ref{sec:markedsurface}). Associated to a quasitriangulation $\D$ is a vertex matrix $P$, which is an anti-symmetric $\D \times \D$ matrix with integer entries. See Section \ref{sec:markedsurface} for details.
Define the Muller algebra 
\be 
 \sX_\xi(\D) = \BC[\cH] \la a^{\pm 1}, a \in \D \mid ab = \xi^{P_{a,b}}\,  ba\ra,
 \lbl{eq.pre1}
 \ee
which was introduced by Muller \cite{Muller} for the case when $\cH=\emptyset$. An algebra of this type is  called a {\em quantum torus}.  A quantum torus is like an algebra of Laurent polynomials in several variables which $q$-commute, i.e. $ab= q^k ba$ for a certain integer $k$. Such a quantum torus is Noetherian, an Ore domain, and has many other nice properties. In particular, $\sX_\xi(\D)$ has a ring of fractions $\tiX_\xi(\D)$ which is a division algebra. The $\BC[\cH]$-subalgebra of $\sX_\xi(\D)$ generated by $a\in \D$ is denoted by $\sX_{+,\xi}(\D)$.

\def\tvD{\tilde \varphi_\D}
Then we have the following result.
\begin{theorem} [See Theorem \ref{r.torus}]\lbl{thm.2}
 Assume $\D$ is a quasitriangulation of marked surface $\SM$. Then there is a natural algebra embedding 
 $$\varphi_\D: \Sx\SM \embed \sX_\xi(\D)$$ such that $\varphi_\D(a)= a$ for all $a\in \D$. The image of $\varphi_\D$ is sandwiched between $\sX_{+,\xi}(\D)$ and $\sX_{\xi}(\D)$. The algebra $\Sx\SM$ is an Ore domain, and $\varphi_\D$ induces an isomorphism 
 $ \tvD: \widetilde \Sx\SM \overset \cong \longrightarrow \tiX_\xi(\D)$ where $\widetilde \Sx\SM $ is the  division algebra of $ \Sx\SM$.
\end{theorem}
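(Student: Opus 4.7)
The plan is to build on Muller's original embedding theorem for totally marked surfaces, extending it to handle unmarked boundary components through the holed-monogon structure of quasitriangulations, and then to extract the Ore-domain and fraction-field statements from the resulting sandwich of subalgebras.

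First I would define $\varphi_\D$ on a natural generating set of $\Sx\SM$. For each $\cP$-arc $a\in \D$, set $\varphi_\D(a)=a$ as a monomial generator of $\sX_\xi(\D)$, and for each unmarked boundary component $\beta \in \cH$, set $\varphi_\D(\beta)=\beta \in \BC[\cH]\subset \sX_\xi(\D)$. For a general $\cN$-tangle $T$ in $\Sigma\times(-1,1)$, after isotoping $T$ into general position with respect to $\D$ and lifting to a height function on $(-1,1)$, each intersection of $T$ with an arc of $\D$ can be smoothed using the Kauffman skein relation. Iterating gives $T$ as a $\BC[\cH]$-linear combination of disjoint unions of arcs in $\D$, with $\xi$-powers determined by the relative heights at common endpoints and the signs of smoothings, thereby yielding an explicit formula for $\varphi_\D(T) \in \sX_\xi(\D)$.

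Next I would verify that the three defining relations of $\Sx\SM$ are respected. The $q$-commutation $ab=\xi^{P_{a,b}}ba$ of the quantum torus encodes exactly the height-exchange at common endpoints of two arcs in $\cP$: swapping the heights of $a$ and $b$ near a marked point contributes $\xi^{\pm 2}$, and summing over all shared endpoints reproduces $P_{a,b}$. The skein, trivial-knot, and trivial-arc relations each then reduce either to identities that hold tautologically in the quantum torus or to local checks inside a triangle, now supplemented by the holed-monogon case, where arcs crossing a monogon contribute only a central factor in $\BC[\cH]$.

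For injectivity and the sandwich, I would use the $\BC[\cH]$-basis of $\Sx\SM$ given by $\D$-taut multicurves (those with minimal geometric intersection with $\D$), together with a filtration by the $\D$-intersection vector. The leading term of $\varphi_\D(T)$ for $T$ in $\D$-taut position is $\prod_{a\in \D} a^{i_a(T)}$, where $i_a(T)$ counts intersections of $T$ with $a$. Since these leading monomials are distinct and form a $\BC[\cH]$-basis of $\sX_{+,\xi}(\D)$, this simultaneously proves injectivity of $\varphi_\D$ and the inclusion $\sX_{+,\xi}(\D) \subseteq \im \varphi_\D \subseteq \sX_\xi(\D)$. For the final statement, since $\sX_\xi(\D)$ is the Ore localization of $\sX_{+,\xi}(\D)$ at the (Ore) multiplicative set generated by $\{a : a\in \D\}$, and this set lies in $\im \varphi_\D$, inverting these arcs inside $\Sx\SM$ produces $\sX_\xi(\D)$, so $\Sx\SM$ inherits the Ore condition, is a domain, and has division algebra $\tiX_\xi(\D)$ via $\tvD$.

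The main obstacle I anticipate is making the smoothing procedure in the first step well-defined: one must verify that the resulting element of $\sX_\xi(\D)$ is independent of the order of smoothings, the choice of height function, and the isotopy into general position. This is where the $q$-commutation structure of the quantum torus is indispensable, and the essential novelty beyond Muller's totally marked case is the careful local analysis near each unmarked boundary component, ensuring that all arcs traversing a holed monogon contribute only powers of the central element $\beta \in \cH$, so that the global formula genuinely lands in $\sX_\xi(\D)$ rather than some larger localization.
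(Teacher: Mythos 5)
There is a genuine gap at the very first step, and it is more fundamental than the well-definedness issue you flag at the end. You propose to define $\varphi_\D(T)$ by ``smoothing each intersection of $T$ with an arc of $\D$ using the Kauffman skein relation.'' But the edges of $\D$ are not strands of the tangle $T$, so there is no crossing to resolve: the skein module of a (marked) surface admits no cutting or splitting operation along an arc of the surface, and producing such a decomposition is precisely the hard content of the Bonahon--Wong quantum trace, not an elementary consequence of the defining relations. The only elementary way to relate $T$ to the edges is to \emph{multiply} $T$ by edge monomials inside $\Sx\SM$ and resolve the genuine crossings in the product; this yields $T\fm \in \sX_{+,\xi}(\D)$ for a suitable $\D$-monomial $\fm$, not an expression for $T$ itself. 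Consequently your intermediate claim that $T$ becomes a $\BC[\cH]$-linear combination of monomials in the arcs of $\D$ is false: already for the core curve of a marked annulus one gets $X+Y+Y^{-1}$, which requires negative powers of an inner edge, so the image of $\varphi_\D$ is genuinely larger than $\sX_{+,\xi}(\D)$ and no state-sum into the polynomial part exists.

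The paper's proof avoids defining $\varphi_\D$ on tangles altogether. It goes the other way: the reordering relation gives an injective map $\sX_{+,\xi}(\D)\to \Sx\SM$ sending $X_a\mapsto a$ (injective because normalized monomials land in the preferred basis of essential $\cP$-tangles); Muller's intersection-index inequalities $\mu(a,yz)\le\mu(a,y)+\mu(a,z)$ and $\mu(a,y\,a^{\mu(a,y)})=0$, together with maximality of the quasitriangulation, show every $x\in\Sx\SM$ satisfies $x\fm\in\sX_{+,\xi}(\D)$ for some $\D$-monomial $\fm$; the set of $\D$-monomials is a two-sided Ore set (left Ore follows from right Ore via the reflection anti-involution), and localizing identifies $\Sx\SM$ with a subring of $\sX_{+,\xi}(\D)\fM^{-1}=\sX_\xi(\D)$. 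The map $\varphi_\D$ is then \emph{defined} as the restriction of the inverse of this localized isomorphism, which makes well-definedness automatic, and the Ore-domain and division-algebra statements follow from the general sandwich principle (a ring squeezed between an Ore domain and its localization is Ore with the same division ring) rather than from your informal ``inherits the Ore condition'' step, which as written presupposes rather than proves the Ore property of $\Sx\SM$. If you want to salvage your outline, replace the smoothing step by this multiply-then-localize argument; the leading-term filtration you describe is essentially Muller's injectivity argument and is fine once the map actually exists.
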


In the case when $\cH=\emptyset$, Theorem \ref{thm.2} was proved by Muller \cite{Muller}.
The significance of the theorem is that, as $\Sx\SM$ is sandwiched between $\sX_{+,\xi}(\D)$ and $\sX_\xi(\D)$,  many problems concerning  $\Sx\SM$ are reduced to problems concerning the quantum torus $\sX_\xi(\D)$ which is algebraically much simpler.

\subsection{Surgery} One important feature of the inclusion of unmarked boundary components is that we can develop a surgery theory.
One can consider the embedding $\vpD: \Sx\SM \embed \sX_\xi(\D)$ as a coordinate system of the skein algebra which depends on the quasitriangulation $\D$. A function  on $ \Sx\SM$ defined through the coordinates  makes sense only if it is independent of the coordinate systems. One such problem is discussed in the next subsection. To help with this independence problem we develop a surgery theory of coordinates in Section \ref{sec.surgery} which describes how the coordinates change under certain modifications of the marked surfaces. We will consider  two such modifications: one is to add a marked point and the other one is to plug a hole, (i.e. glue a disk to a boundary component with no marked points). Note that the second one changes the topology of the surface, and is one of the reasons why we want to extend Muller's result to allow unmarked boundary components. Besides helping with proving the existence of the Chebyshev-Frobenius homomorphism, we think our surgery theory will find more applications elsewhere.

\subsection{Independence of triangulation problem} Suppose $\SM$ is a {\em triangulable} marked surface, i.e.  every boundary component of $\Sigma$ has at least one marked point and $\SM$ has a quasitriangulation. In this case a quasitriangulation is  called  simply a {\em triangulation}. Let $\D$ be a  triangulation  of $\SM$. Let $\xi$ be a non-zero complex number (not necessarily a root of unity), $N$ be a positive integer, and $\ve=\xi^{N^2}$. 

From the presentation \eqref{eq.pre1} of the quantum tori $\sX_\ve (\D)$ and $\sX_\xi (\D)$, one sees that there is an algebra homomorphism, called the Frobenius homomorphism,
$$ F_N : \sX_\ve (\D) \to \sX_\xi(\D), \quad\text{given by}\ F_N(a)= a^N \quad \forall a \in \D,$$
which is injective, see Proposition \ref{r.Frobenius}.

Identify $\cS_\nu \SM$ with a subset of $\sX_\nu(\D)$ for $\nu=\xi$ and $\nu=\ve$ via the embedding $\vpD$. Consider the following diagram 
\be \lbl{dia.sx0}
\begin{tikzcd}
\cS_\ve \SM  \arrow[hookrightarrow]{r} \arrow[dashed,"?"]{d}& \sX_\ve(\D)  \arrow[d, "F_N"] \\
\cS_\xi \SM \arrow[hookrightarrow]{r} & \sX_\xi(\D)
\end{tikzcd}
\ee

We consider the following questions about $F_N$:

A. For what  $\xi \in \BC\setminus \{0\}$ and $N\in \BN$ does   $F_N$  restrict to a map from $\cS_{\ve}\SM$ to $\cS_{\xi}\SM$ and the restriction does not depend on the triangulation $\D$?

B.  If $F_N$ can restrict to such a map,  can one define the restriction of $F_N$ onto $\cS_{\ve}\SM$ in an intrinsic way, not referring to any triangulation $\D$?

It turns out that the answer to Question A is that $\xi$ must be a root of unity, and $N$ is the order of $\xi^4$. See Theorem \ref{thm.A}. Then Theorem \ref{thm.surface}, answering Question B, states that under these assumptions on $\xi$ and $N$, the restriction of $F_N$ onto $\cS_{\ve}\SM$ can be defined in an intrinsic way without referring to any triangulation. Explicitly, if $a$ is a $\cP$-arc, then $F_N(a)= a^N$, and if $\al$ is a $\cP$-knot, then $F_N(\al)= T_N(\al)$. From these results and  the functoriality of the skein modules we can prove Theorem \ref{thm.1}.

\subsection{Centrality and transparency}
With the help of Theorem \ref{thm.2} we are able to determine the center of $\Sx\SM$ for generic $\xi$.
\begin{theorem}[See Theorem \ref{thm.center1}] Suppose $\SM$ is a marked surface with at least one quasitriangulation and $\xi$ is not a root of unity. Then the center of $\Sx\SM$ is the $\BC$-subalgebra generated by $z_\beta$ for each connected component $\beta$ of $\pS$. Here if $\beta \cap \cP=\emptyset$ then $z_\beta= \beta$, and if $\beta \cap \cP\neq \emptyset$ then $z_\beta$ is the product of all $\cP$-arcs lying in $\beta$.
\end{theorem}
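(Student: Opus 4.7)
The plan is to use the Muller-style embedding $\vp_\D : \Sx\SM \embed \sX_\xi(\D)$ of Theorem~\ref{thm.2} to translate the centrality question into a kernel computation in a quantum torus, where at generic $\xi$ the center is particularly easy to describe. Fix a quasitriangulation $\D$ of $\SM$. By Theorem~\ref{thm.2}, $\vp_\D$ extends to an isomorphism of division algebras $\widetilde{\Sx\SM} \overset\cong\longrightarrow \tiX_\xi(\D)$. Consequently any $c \in \Sx\SM$ that commutes with every element of $\Sx\SM$ must commute with the whole skew-field of fractions and hence with all of $\sX_\xi(\D)$, giving
$$ Z\bigl(\Sx\SM\bigr) \;=\; \Sx\SM \,\cap\, Z\bigl(\sX_\xi(\D)\bigr). $$

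Next I would invoke the standard description of the center of a quantum torus at a generic parameter. For any $b \in \D$ and any Laurent monomial $a^n := \prod_{a\in\D} a^{n_a}$ one has $a^n \cdot b = \xi^{(Pn)_b}\, b \cdot a^n$, and because $\xi$ is not a root of unity distinct Laurent monomials are $\BC[\cH]$-linearly independent. A $\BC[\cH]$-combination of monomials is therefore central iff every term is central, and a single monomial $a^n$ is central iff $Pn = 0$. Hence $Z(\sX_\xi(\D))$ is the $\BC[\cH]$-span of $\{a^n : n \in \ker P\}$; the generators $z_\beta = \beta$ coming from unmarked components $\beta \in \cH$ are visibly in this center.

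For marked boundary components, let $n^\beta \in \{0,1\}^\D$ be the indicator of the boundary $\cP$-arcs contained in $\beta$. A direct inspection of the vertex matrix (at each endpoint of a boundary $\cP$-arc in $\beta$, the adjacent corners contribute cancelling $\pm 1$'s) shows $Pn^\beta = 0$, and the associated monomial is $z_\beta$ up to a central power of $\xi$; hence each $z_\beta$ is central. The converse requires showing (i) that $\ker P$ is $\BZ$-spanned by the $n^\beta$ together with trivial contributions from $\cH$, and (ii) that no genuinely Laurent central monomial lies in the image of $\vp_\D$. For (i) the vectors $n^\beta$ for distinct marked $\beta$ have pairwise disjoint supports in $\D$, so they are automatically $\BZ$-linearly independent, and the upper bound on $\dim\ker P$ is an Euler-characteristic count. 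For (ii) and the general case of a sum of monomials, I would use a leading-monomial argument relative to a term order on $\sX_\xi(\D)$: writing a central $c \in \Sx\SM$ as a sum of simple diagrams and applying $\vp_\D$, the leading monomial of $\vp_\D(c)$ must itself be central, hence lie in $\ker P$, hence be a nonnegative combination of the $n^\beta$ (by disjointness of supports), hence equal a monomial in the $z_\beta$; inductive subtraction reduces $c$ and terminates.

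The main obstacle is the combinatorial rank computation in step (i): pinning down $\ker P$ exactly as the $\BZ$-span of the $n^\beta$ requires delicate bookkeeping of vertices, triangles, and especially the holed monogons that arise around unmarked boundary components in a quasitriangulation, a complication absent from the purely triangulated case. Once the kernel is identified, the positivity step is forced by disjointness of the $n^\beta$, and the leading-term reduction stays inside $\vp_\D(\Sx\SM)$ because that image is sandwiched between $\sX_{+,\xi}(\D)$ and $\sX_\xi(\D)$ and each $z_\beta$ already lives in $\sX_{+,\xi}(\D)$.
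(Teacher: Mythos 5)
Your overall strategy coincides with the paper's: reduce to the center of the quantum torus $\sX_\xi(\D)$ (the paper does this via a weak-generation lemma rather than the division algebra, but the two are equivalent), observe that since $\xi$ is not a root of unity a monomial $X^\bk$ is central iff $\bk\in\ker P$, and then identify $\ker P$ with the $\BZ$-span of the indicator vectors $\bk_\beta$ of the marked boundary components. The genuine gap sits exactly where you yourself flag ``the main obstacle'': you never establish the upper bound $\mathrm{Null}(P)\le|\Hm|$, only gesturing at ``an Euler-characteristic count'' with ``delicate bookkeeping.'' This is the heart of the theorem; a larger kernel would produce additional central monomials, and since $\cS_\xi\SM$ is sandwiched all the way up to $\sX_\xi(\D)$ you cannot exclude a priori that such monomials lie in the skein algebra. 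The paper avoids the direct combinatorial count entirely: it quotes the totally marked case $\mathrm{Null}(P)=|\Hm|$ from \cite[Lemma 4.4(b)]{Le3}, and then uses the add-a-marked-point surgery of Section \ref{sec.surgery} to show that converting an unmarked component into a marked one increases the nullity by at least one (Claim 1 of Lemma \ref{r.kerPZm}); comparing the original quasitriangulation with the totally marked surface obtained after $|\Hu|$ such steps squeezes $\mathrm{Null}(P)$ to exactly $|\Hm|$. If you insist on the direct route you must actually carry out the rank computation for a general quasitriangulation, eyes and holed monogons included; as written, the key lemma is asserted, not proved.

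Two smaller points. Your verification that $P\bk_\beta=0$ by cancelling $\pm1$ contributions at each endpoint is correct (the paper instead checks centrality of $z_\beta$ skein-theoretically via the reordering relation, which amounts to the same thing). However, your step (ii) is under-justified: the sandwich $\sX_{+,\xi}(\D)\subset\cS_\xi\SM\subset\sX_\xi(\D)$ does not by itself rule out $z_\beta^{-1}\in\cS_\xi\SM$, since $z_\beta^{-1}$ does lie in $\sX_\xi(\D)$ and is central there. What closes this is that $\vp_\D(\cS_\xi\SM)\subset\sX_{+,\xi}(\D)\,\fM_\inn^{-1}$ (Remark \ref{rem.Deltain}), i.e.\ only inner edges are ever inverted, whereas each $\bk_\beta$ is supported on boundary edges; together with the pairwise disjointness of the supports of the $\bk_\beta$ this shows that the central elements of $\cS_\xi\SM$ are honest, not Laurent, polynomials in the $z_\beta$ over $\BC[\cH]$. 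Once that is in place your leading-monomial induction is unnecessary.
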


The center of an algebra is important, for example, in questions about the representations of the algebra. When $\xi$ is a root of unity and $\cP=\emptyset$, the center of $\Sx(\Sigma, \emptyset)$ is determined in \cite{FKL} and is instrumental in proving the main result there, the unicity conjecture of Bonahon and Wong. In a subsequent paper we will determine the center of $\Sx\SM$ for the case when $\xi$ is a root of unity. In \cite{PS2}, the center of a {\em reduced} skein algebra, which is a quotient of $\Sx\SM$, was determined for the case when $\xi$ is not a root of unity.

An important notion closely related to centrality in skein algebras of marked surfaces is {\em transparency} and {\em skew-transparency} in skein modules of marked 3-manifolds.  Informally an element $x \in \cS_\xi\MN$ is transparent if passing a strand of an $\cN$-tangle $T$ through $x$ does not change the the value of the union $x \cup T$.  

We generalize the result in \cite{Le2} for unmarked 3-manifolds to obtain the following theorem. 

\begin{theorem} (See Theorem \ref{r.mutransparent})
Suppose $\MN$ is a marked 3-manifold, $\xi$ is a root of unity, $N=\text{ord}(\xi^4)$, and $\ve=\xi^{N^2}$. Suppose $\xi^{2N} = 1$.  Let $\Phi_\xi: \cS_\ve\MN \to \cS_\xi\MN$ be  the Chebyshev-Frobenius homomorphism. Then the image of $\Phi_\xi$ is transparent in the sense that if $T_1,T_2$ are $\cN$-isotopic $\cN$-tangles disjoint from another $\cN$-tangle $T$, then in $\cS_\xi\MN$ we have
$$ \Phi_\xi(T) \cup T_1 = \Phi_\xi(T) \cup T_2.$$
\end{theorem}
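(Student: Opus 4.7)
The plan is to reduce the theorem to a local skein computation and then invoke the Muller embedding of Theorem~\ref{thm.2} together with the Chebyshev identity. Given $\cN$-isotopic tangles $T_1, T_2$ disjoint from $T$, put them in general position and decompose a generic $\cN$-isotopy from $T_1$ to $T_2$ into a finite sequence of elementary moves. Moves that do not cross $\Phi_\xi(T)$ preserve the union $\Phi_\xi(T)\cup T_i$ tautologically in $\Sx\MN$, so it suffices to analyze the moves in which a strand of $T_i$ sweeps across a component of $\Phi_\xi(T)$. Each such move is supported in an arbitrarily small ball $B\subset M$ inside which $\Phi_\xi(T)$ appears either as $N$ parallel strands coming from a framed power $a^{(N)}$ of an arc component $a$ of $T$, or as the localized Chebyshev threading $T_N^{\fr}(\al)$ of a short segment of a knot component $\al$. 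By functoriality of the skein module under inclusion of marked 3-manifolds, it is enough to verify transparency in each such $B$.

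For the knot-component case, the local picture in $B$ is identical to the unmarked situation, so we may apply the unmarked transparency result of L\^e~\cite{Le2} (an analogue of Bonahon-Wong): the Chebyshev identity $T_N(A+A^{-1}) = A^N + A^{-N}$, combined with the hypothesis $\xi^{2N}=1$, forces the contribution of the over-crossings and under-crossings of the crossing strand with $T_N^{\fr}(\al)$ to coincide in $\Sx\MN$. The marked endpoints play no role in this local argument because the Chebyshev threading is concentrated in the interior of $B$.

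For the arc-component case, enlarge $B$ to a thickened marked surface $(\Sigma\times(-1,1),\,\cP\times(-1,1))$ in which the local segment of $a$ extends to an edge of some quasitriangulation $\D$ of $\SM$, and in which the crossing strand $\beta$ of $T_i$ becomes an $\cN$-tangle whose projection is a $\cP$-arc transverse to $a$. By functoriality the transparency in $B$ reduces to the statement that $a^{(N)}$ is central in $\Sx\SM$. Applying Theorem~\ref{thm.2}, this becomes centrality of the monomial $a^N$ in $\varphi_\D(\Sx\SM)\subset \sX_\xi(\D)$. The commutation $a^N b = \xi^{NP_{a,b}} b\, a^N$ in the quantum torus translates geometrically into the height-exchange relation for $a^{(N)}$ inside the thickening; to conclude transparency one must show that under $\xi^{2N}=1$, the skein-relation resolutions that arise when pushing $\beta$ across $a^{(N)}$ recombine so that the total phase cancels, exactly as in the Chebyshev case for knots.

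The principal obstacle is the cancellation step for arcs. A naive application of the quantum-torus commutation only gives $\xi^{N P_{a,b}}=(\pm1)^{P_{a,b}}$, which is insufficient when $P_{a,b}$ is odd. The resolution is to exploit the fact that moving $\beta$ through the full bundle of $N$ parallel strands produces skein-relation resolutions in matched pairs, whose two contributions differ by exactly the offending sign and cancel; this is the same ``miraculous'' identity that underlies the well-definedness of $\Phi_\xi$ in Theorem~\ref{thm.1} (a specialization of the Frobenius $F_N$ of diagram~\eqref{dia.sx0}). Once this cancellation is established, combining the arc and knot local cases and reassembling the local transparencies along the isotopy completes the proof.
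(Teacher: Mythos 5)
Your reduction of the knot-component case to the unmarked result of \cite{Le2} via functoriality is exactly what the paper does, and your overall scheme of decomposing the isotopy into single passes and localizing in a small ball is also the paper's. The gap is entirely in the arc case, and it is a genuine one. You reduce transparency of $a^{(N)}$ to \emph{centrality} of $a^N$ in $\cS_\xi\SM$, i.e.\ to the quantum-torus commutation $a^N b = \xi^{N P_{a,b}}\, b\, a^N$. But that commutation factor arises from the reordering relation at the marked points (Figure \ref{fig:boundary}) --- it measures exchanging the heights of endpoints on $\cN$ --- whereas transparency concerns pushing a strand through $a^{(N)}$ in the \emph{interior} of $M$. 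These are different operations, and the reduction is not valid: under the hypotheses of the theorem one can have $\xi^N=-1$ (e.g.\ $\xi$ a primitive $6$th root of unity, so $N=\ord(\xi^4)=3$ and $\xi^{2N}=1$), in which case $a^N b=-b\,a^N$ whenever $P_{a,b}$ is odd, so $a^N$ is genuinely non-central in $\cS_\xi\SM$ --- yet the theorem asserts that $a^{(N)}$ is transparent. If transparency really reduced to centrality, the theorem would be false.

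You then notice the problem (``insufficient when $P_{a,b}$ is odd'') and appeal to an unspecified cancellation ``in matched pairs,'' invoking the ``miraculous identity'' behind the well-definedness of $\Phi_\xi$. This is precisely where the content of the proof should be, and it is missing; no such pairing exists in the form you describe, and the sign you are trying to cancel is a boundary phenomenon irrelevant to the interior crossing change. The paper instead makes a direct local computation (Figure \ref{fig:center4}): take a neighborhood $U$ consisting of a tubular neighborhood of $a$ together with the track of the single-pass isotopy, and resolve the $N$ crossings of the moving strand with $a^{(N)}$ inductively using the skein relation and the trivial arc relation in $\cS_\xi(U,\cQ)$. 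The surviving terms show that passing on one side of the bundle contributes $\xi^{N}$ and on the other side $\xi^{-N}$ times the same tangle, so transparency holds exactly when $\xi^{2N}=1$ (and skew-transparency when $\xi^{2N}=-1$). To repair your argument, replace the centrality reduction by this crossing-change computation.
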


Note that since $\text{ord}(\xi)=N$, we have either $\xi^{2N}=1$ or $\xi^{2N}=-1$. When $\xi^{2N}=-1$, the corresponding result is that the image of $\Phi_\xi$ is {\em skew-transparent}, see 
Theorem \ref{r.mutransparent}. Theorem \ref{r.mutransparent} can be depicted pictorially as the identity in Figure \ref{fig:transparent}.

\FIGc{transparent}{Applying the Chevyshev-Frobenius homomorphism to an $\cN$-tangle makes it transparent or skew-transparent.}{1.5cm}

\subsection{Chebyshev polynomial of $q$-commuting variable}

In the course of proving the main theorem, we apply the following simple but useful result given in Proposition \ref{p.prop31} relating Chebyshev polynomials and $q$-commuting variables.

\begin{proposition}[See Proposition \ref{p.prop31}]
Suppose $K,E$ are variables and $q$ an indeterminate such that $KE=q^2EK$ and $K$ is invertible. Then for any $n \ge 1$,
$$
T_n(K+K^{-1}+E) = K^n + K^{-n} + E^n + \sum_{r=1}^{n-1}\sum_{j=0}^{n-r} c(n,r,j)[E^rK^{n-2j-r}],
$$
where $c(n,r,j) \in \BZ[q^{\pm 1}]$ is given explicitly by  \eqref{eq.cnrj} as a ratio of $q$-quantum integers, and in fact $c(n,r,j) \in \BN[q^{\pm 1}]$. Besides, if $q^2$ is a root of unity of order $n$, then $c(n,r,j)=0$.  
\end{proposition}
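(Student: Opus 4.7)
The plan is to prove the expansion by induction on $n$ using the Chebyshev recurrence $T_n(X) = X\,T_{n-1}(X) - T_{n-2}(X)$, where $X := K + K^{-1} + E$. Since the formula evidently holds for $n = 1$ (the sum being empty) and since a direct expansion of $X^2 - 2$ handles $n = 2$ after rewriting cross terms in the Weyl-symmetric normalization $[E^r K^s]$, the work is in the inductive step.

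For the inductive step I would assume the displayed expansion for both $T_{n-1}(X)$ and $T_{n-2}(X)$ and compute $X \cdot T_{n-1}(X)$ by distributing each summand of $X$ across every Weyl monomial appearing in $T_{n-1}(X)$. Using the commutation rules $KE = q^2 EK$ and $K^{-1}E = q^{-2} EK^{-1}$, each of the products $K\cdot[E^{r}K^{s}]$, $K^{-1}\cdot[E^{r}K^{s}]$, $E\cdot[E^{r}K^{s}]$ reduces to an explicit $q$-power prefactor times a Weyl monomial $[E^{r}K^{s\pm 1}]$ or $[E^{r+1}K^{s}]$. Collecting all contributions to $[E^{r}K^{n-2j-r}]$ and subtracting the corresponding term of $T_{n-2}(X)$ produces a four-term recurrence
$$c(n,r,j) \;=\; q^{\alpha_1} c(n-1,r,j) + q^{\alpha_2} c(n-1,r,j-1) + q^{\alpha_3} c(n-1,r-1,j) - q^{\alpha_4}\, c(n-2,r,j-1),$$
with exponents $\alpha_i = \alpha_i(n,r,j)$ read off from the $q$-commutation computation; the three "corner" terms $K^n$, $K^{-n}$, $E^n$ appear cleanly from the extremal monomials and close the induction.

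In parallel, I would write down the explicit closed form \eqref{eq.cnrj} as a ratio of quantum integers (expected to be expressible as a product of Gaussian $q$-binomials carrying $[n]_q = (q^n - q^{-n})/(q-q^{-1})$ as a single factor in the numerator) and verify \emph{directly} that it satisfies the four-term recurrence and the boundary conditions. The integrality $c(n,r,j) \in \BZ[q^{\pm 1}]$ is then immediate from the usual fact that quantum binomials lie in $\BZ[q^{\pm 1}]$, and the positivity $c(n,r,j) \in \BN[q^{\pm 1}]$ follows from the well-known positivity of Gaussian binomial coefficients, provided the closed form can be written as a product of such coefficients with no cancellation.

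Finally, the vanishing claim at roots of unity is immediate from the closed form: when $q^2$ has order exactly $n$, one has $[n]_q = 0$ while each $[m]_q$ for $1 \le m < n$ is nonzero, so the single factor $[n]_q$ in the numerator forces $c(n,r,j) = 0$ for $1 \le r \le n-1$. The main obstacle I anticipate is the bookkeeping of the $q$-power prefactors $\alpha_i(n,r,j)$ introduced by Weyl reordering: matching the four-term recurrence against the proposed closed form requires a nontrivial quantum-binomial identity, and the positivity statement in particular demands that the closed form be organized as a manifestly positive product rather than a general rational expression. A useful sanity check is to specialize $q \to 1$: the formula collapses to the classical expansion of $T_n$ evaluated on a sum of three commuting variables, which can be verified independently via the substitution $K = e^{i\theta}$ and a generating-function argument.
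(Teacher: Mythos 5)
Your plan---induction on $n$ via the Chebyshev recurrence, verification of the closed form \eqref{eq.cnrj} against the resulting coefficient recurrence, positivity via positivity of Gaussian $q$-binomials, and vanishing at a primitive $n$th root from the single numerator factor $[n]_q$ (all other quantum integers appearing having index $<n$)---is exactly the paper's approach; indeed the paper's entire proof of Proposition \ref{p.prop31} is the single sentence ``One can easily prove the proposition by induction on $n$,'' with the positivity relegated to a remark and the root-of-unity vanishing to Corollary \ref{c.32}. Your outline is correct and in fact more detailed than the published argument, though a complete write-up would still require the $q$-power bookkeeping in the four-term recurrence that you yourself identify as the main remaining work.
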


In particular, if $q^2$ is a root of unity of order $n$, then
$$ 
T_n(K+K^{-1}+E) = K^n + K^{-n} + E^n.
$$
The above identity
 was first proven in \cite{BW1}, as an important case of the calculation of the Chebyshev homomorphism. See also \cite{Le2}. An algebraic proof of identities of this type is given in \cite{Bonahon}. Here the identity follows directly from  the Proposition \ref{p.prop31}. The new feature of Proposition \ref{p.prop31} is that it deals with generic  $q$ and may be useful in the study of the positivity of the skein algebra, see \cite{Thurston,Le:Positive}.

\subsection{Structure of the paper}
A brief summary of each section of the paper is as follows.

\begin{enumerate}
\setcounter{enumi}{+1}
\item \textbf{Quantum tori and Frobenius homomorphism.} This section defines the quantum torus abstractly. Also in this section is the definition of the Frobenius homomorphism between quantum tori.
\item \textbf{Chebyshev polynomials and quantum tori.} This section is a review of Chebyshev polynomials and some computations of Chebyshev polynomials with $q$-commuting variables.
\item \textbf{Skein modules of 3-manifolds.} This section defines the skein module of a marked 3-manifold $(M,\cN)$ and some related terminology.
\item \textbf{Marked surfaces.} Here we define some terminology related to marked surfaces $\SM$, including $\cP$-triangulations and $\cP$-quasitriangulations, as well as the vertex matrix which is used to construct the quantum torus into which the skein algebra embeds, known as a Muller algebra.
\item \textbf{Skein algebras of marked surfaces.} Here, Muller's skein algebra of totally marked surfaces is extended to the case where $(\Sigma,\cP)$ is not totally marked and basic facts are given, such as how to embed the skein algebra in the Muller algebra.
\item \textbf{Modifying marked surfaces.} We describe a surgery theory that describes what happens to a skein algebra of a marked surface when marked points are added or when a hole corresponding to an unmarked boundary component is plugged.
\item \textbf{Chebyshev-Frobenius homomorphism.} We prove the existence of the Chebyshev-Frobenius homomorphism between marked 3-manifolds and marked surfaces in this section.
\item \textbf{Image of $\Phi_\xi$ and (skew-)transparency} We show that the Chebyshev-Frobenius homomorphism is (skew-)transparent. 
\item \textbf{Center of the skein algebra for $q$ not a root of unity} We utilize the Muller algebra to give a short argument which finds the center of the skein algebra when $q$ is not a root of unity.
\end{enumerate}

\subsection{Acknowledgments} The authors would like to thank F. Bonahon, C. Frohman, J. Kania-Bartozynska, A. Kricker, G. Masbaum, G. Muller,  A. Sikora, D. Thurston, and the anonymous referee for helpful discussions. The first author  would like to thank the CIMI Excellence Laboratory, Toulouse, France, for inviting him on a Excellence Chair during the period of January -- July 2017 when part of this work was done. The first author is supported in part by NSF grant DMS 1811114.

\section{Quantum tori and Frobenius homomorphism}\lbl{s.qtorus}
In this section  we survey the basics of quantum tori, Ore domains, and present the Frobenius homomorphism of quantum tori.
Throughout the paper $\BN,\BZ,\BQ,\BC$ are respectively the set of all non-negative integers, the set of integers, the set of rational numbers,  and the set of complex numbers.
 All rings  have unit and are associative. 
 
 In this section $R$ is a commutative Noetherian domain containing a distinguished  invertible element $q^{1/2}$. The reader should have in mind the example $R=\BZ[q^{\pm 1/2}]$.

\subsection{Weyl normalization}\lbl{sec.weylnormalization} Suppose $\mathcal{A}$ is an $R$-algebra, not necessarily commutative. Two elements $x,y \in \mathcal{A}$ are said to be {\em q-commuting} if there is $\mathcal{C}(x,y) \in \BZ$ such that $xy=q^{\mathcal{C}(x,y)}yx$. Suppose $x_1,x_2,\ldots,x_n \in \mathcal{A}$ are pairwise $q$-commuting. Then the {\em Weyl normalization} of $\prod_i x_i$ is defined by
$$
[x_1x_2 \ldots x_n]:=q^{-\frac{1}{2}\sum_{i<j}\mathcal{C}(x_i,x_j)}x_1x_2 \ldots x_n.
$$
It is known that the normalized product does not depend on the order, i.e. if $(y_1,y_2,\ldots,y_n)$ is a permutation of $(x_1,x_2,\ldots,x_n)$, then $[y_1y_2 \ldots y_n]=[x_1 x_2 \ldots x_n]$.

\def\cA{\mathcal A}
\subsection{Quantum torus}For a finite set $I$  denote by $\Mat(I \times I, \BZ)$ the set of all $I\times I$ matrices with entries in $\BZ$, i.e.
 $A \in \Mat(I \times I, \BZ)$ is a function $A: I \times I \to \BZ$.
We write $A_{ij}$ for $A(i,j)$.

Let $A \in \Mat(I \times I, \BZ)$ be antisymmetric, i.e. $A_{ij}= - A_{ji}$. 
Define
 the {\em quantum torus over $R$ associated to $A$ with basis variables $x_i, i \in I$ } by
\begin{align*}
\bT(A;R):= R\la x_i^{\pm 1} , i\in I\ra /(x_i x_j = q^{A_{ij}} x_j x_i).
\end{align*}
When $R$ is fixed, we write $\bT(A)$ for $\bT(A;R)$.
Let  $\bT_+(A)\subset \bT(A)$ be the subalgebra generated by $x_i, i\in I$.

Let $\BZ^I$ be the set of all maps $\bk: I \to \BZ$. For $\bk \in \BZ^I$ define the {\em normalized monomial} $x^\bk$ using the Weyl normalization
$$
x^\bk = \left[ \prod_{i \in I} x_i^{\bk(i)} \right].
$$
The set $\{x^\bk \mid \bk \in \BZ^I\}$ is an $R$-basis of $\bT(A)$, i.e. we have the direct decomposition
\be 
\lbl{eq.grading}
\bT(A) = \bigoplus_{\bk \in \BZ^I} R \, x^\bk.
\ee
Similarly, $\bT_+(A;R)$ is  free over $R$ with basis $\{x^\bk \mid \bk \in \BN^I\}$.

Define an anti-symmetric $\BZ$-bilinear form on $\BZ^I$ by

$$
\langle \bk, \bn \rangle_A:= \sum_{i,j\in I} A_{ij}\, \bk(i)\bn(j)
$$
The following well-known fact follows easily from the definition:
 For $\bk,\bn \in \BZ^I$, one has
\be \lbl{e.normalizedtorus}
x^\bk x^\bn = q^{\frac{1}{2} \langle \bk, \bn \rangle_A} x^{\bk+\bn}  = q^{\langle \bk,\bn \rangle_A}x^{\bn}x^{\bk},
\ee

In particular, for $n \in \BZ$ and $\bk \in \BZ^I$, one has 
\be 
\lbl{eq.power}
(x^\bk)^n = x^{n\bk}.
\ee
The first identity of  \eqref{e.normalizedtorus} shows that the decomposition \eqref{eq.grading} is a $\BZ^I$-grading of the $R$-algebra $\bT(A)$.

\def\cB{\mathcal B}
\subsection{Two-sided Ore domains, weak generation} Both $\bT(A;R)$ and $\bT_+(A;R)$ are two-sided Noetherian domains, see \cite[Chapter 2]{GW}. As any two-sided Noetherian domain is a two-sided Ore domain (see \cite[Corollary 6.7]{GW}), both $\bT(A;R)$ and $\bT_+(A;R)$ are two-sided Ore domains. Let us review some facts in the theory of Ore localizations. 

A {\em regular element} of a ring $D $ is any element $x\in D $ such that $xy\neq 0$ and  $yx\neq0$ for all non-zero $ y\in D $. 
If every $x\in D \setminus \{0\}$ is regular, we call $D $ a {\em domain}.

For a multiplicative subset $X\subset D $ consisting of regular elements of $D $, a ring $E$ is called a {\em ring of fractions for $D $ with respect to $X$} if $D $ is a subring of $E$ such that (i) every $x\in X$ is invertible in $E$ and (ii) every $e\in E$ has presentation $ e= d x^{-1} = (x')^{-1}(d') $ for $d,d'\in D $ and $x,x'\in X$. Then $D $ has a ring of fractions  with respect to $X$ if and only if $X$ is a two-sided Ore set, and in this case the left Ore localization $X^{-1} D $ and the right Ore localization $D  X^{-1}$ are the same and are isomorphic to $E$, see \cite[Theorem 6.2]{GW}. If $D $ is a domain and $X=D \setminus \{0\}$ is a two-sided Ore set, then $D $ is called an {\em Ore domain}, and $X^{-1} D = D  X^{-1}$ is a division algebra, called the {\em division algebra of $D $}.

\begin{proposition} \lbl{r.sandwich0}  Suppose $X$ is a two-sided Ore set of a ring $D$ and $D \subset D'\subset DX^{-1}$, where $D'$ is a subring of $DX^{-1}$.
 
(a) The set  $X$ is  a two-sided Ore set of $D'$ and $D'X^{-1}= D X^{-1}$.

(b) If $D$ is an Ore domain then so is $D'$, and both have the same division algebra.

\end{proposition}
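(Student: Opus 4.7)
The plan is to derive the Ore conditions for $X$ in $D'$ directly from the standard fact that every element of $DX^{-1}$ admits both a right fraction form $dx^{-1}$ (with $d\in D$, $x\in X$) and a left fraction form $x^{-1}d$. Given $d' \in D'$ and $x \in X$, I would consider $x^{-1}d' \in DX^{-1}$ and write it in right-fraction form as $d_0 y^{-1}$ with $d_0 \in D$ and $y \in X$; clearing denominators gives $d'y = xd_0$, which is the right Ore identity for the pair $(d',x)$ in $D'$, with witness $d_0 \in D \subseteq D'$ and $y \in X$. The left Ore identity is obtained symmetrically by writing $d'x^{-1} \in DX^{-1}$ in left-fraction form $y^{-1}d_0$, which yields $y d' = d_0 x$.

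Once $X$ is known to be a two-sided Ore set in $D'$, the localization $D'X^{-1}$ exists as a subring of any overring in which $X$ is invertible. The equality $D'X^{-1} = DX^{-1}$ then follows by two trivial inclusions: $D \subseteq D'$ gives $DX^{-1} \subseteq D'X^{-1}$, while $D' \subseteq DX^{-1}$ together with $X^{-1} \subseteq DX^{-1}$ gives $D'X^{-1} \subseteq DX^{-1}$. This proves (a).

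For (b), take $X = D \setminus \{0\}$, so that $DX^{-1}$ is the division algebra of $D$. Since $D'$ is a subring of a division algebra, $D'$ has no zero divisors, i.e.\ is a domain, so it remains to show the set $Y := D' \setminus \{0\}$ is a two-sided Ore set in $D'$ and that $D' Y^{-1} = DX^{-1}$. Observe $Y \supseteq X$ and every element of $Y$ is already invertible in $DX^{-1}$ (being a nonzero element of a division ring). Thus the exact same fraction-rewriting argument used in (a) applies with $x$ replaced by any $y \in Y$: given $d' \in D'$ and $y \in Y$, write $y^{-1}d' = d_0 x^{-1}$ in $DX^{-1}$ with $d_0 \in D \subseteq D'$ and $x \in X \subseteq Y$, so $d'x = yd_0$ gives the right Ore identity, and symmetrically for the left. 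Hence $D'$ is Ore, and the same double-inclusion argument as in (a) shows $D'Y^{-1} = DX^{-1}$, so the two division algebras coincide.

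There is no real obstacle here; the whole proposition is a transitivity-of-localization lemma, and the only thing to be careful about is choosing fraction presentations whose "numerator" already lies in $D$ (hence in $D'$) so that the Ore witnesses are produced inside $D'$ rather than merely inside the ambient $DX^{-1}$.
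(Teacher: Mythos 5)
Your proof is correct and follows essentially the same route as the paper's: both hinge on the fact that every element of $DX^{-1}$ admits left and right fraction presentations with numerator in $D\subseteq D'$, which you use to verify the Ore identities for $X$ in $D'$ directly, while the paper packages the same observation as the statement that $DX^{-1}$ is a ring of fractions for $D'$ with respect to $X$ and invokes the cited equivalence with the two-sided Ore condition. Part (b) is handled identically in both arguments, by replacing $X$ with $D\setminus\{0\}$ and reapplying part (a).
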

\begin{proof} (a) Since $DX^{-1}$ is also a ring of fractions for $D'$ with respect to $X$, one has that  $X$ is a two-sided Ore set of $D'$ and $D'X^{-1}= D X^{-1}$.

(b) Let $Y=D\setminus \{0\}$. Since $D\subset D' \subset DX^{-1} \subset DY^{-1}$, the result follows from (a).
\end{proof}

\begin{corollary} \lbl{r.sandwich}  Suppose  $\bT_+(A)\subset D\subset \bT(A)$, where $D$ is a subring of $\bT(A)$. Then $D$ is an Ore domain and the embedding $ D\embed \bT(A)$ induces an $R$-algebra isomorphism from the division algebra of $D$ to that of $\bT$.
\end{corollary}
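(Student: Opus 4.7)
The plan is to realize $\bT(A)$ as an Ore localization of $\bT_+(A)$ and then apply Proposition \ref{r.sandwich0}(b) twice. Specifically, let $X \subset \bT_+(A)$ be the multiplicative subset generated by $\{x_i : i \in I\}$. Because these generators are pairwise $q$-commuting, for any monomial $y = [x_1^{k_1}\cdots x_n^{k_n}] \in X$ (with $k_i \ge 0$) and any element $d \in \bT_+(A)$, the identity \eqref{e.normalizedtorus} lets us write $yd = d'y$ and $dy = yd''$ for suitable $d',d'' \in \bT_+(A)$. Thus $X$ is a two-sided Ore set consisting of regular elements.

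Next I would identify the Ore localization. The universal property of the Ore localization $\bT_+(A)X^{-1}$ gives a canonical map from it to $\bT(A)$, since each $x_i$ is already invertible in $\bT(A)$. Conversely, every $x^{\bk}$ with $\bk \in \BZ^I$ can be written as a fraction $x^{\bk_+} (x^{\bk_-})^{-1}$ using the decomposition $\bk = \bk_+ - \bk_-$ with $\bk_\pm \in \BN^I$ and the formula \eqref{eq.power}. Combined with the $R$-basis description \eqref{eq.grading}, this gives $\bT(A) = \bT_+(A)X^{-1}$.

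Now the hypothesis reads $\bT_+(A) \subset D \subset \bT(A) = \bT_+(A)X^{-1}$. Proposition \ref{r.sandwich0}(b) applied to the pair $(\bT_+(A), D)$ shows that $D$ is an Ore domain whose division algebra equals that of $\bT_+(A)$. Applying the same proposition to the pair $(\bT_+(A), \bT(A))$ shows that $\bT(A)$ is an Ore domain with the same division algebra as $\bT_+(A)$. Composing these identifications yields the desired $R$-algebra isomorphism between the division algebra of $D$ and that of $\bT(A)$, and by construction it is induced by the inclusion $D \hookrightarrow \bT(A)$.

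The only non-routine step is the verification that $X$ is a two-sided Ore set with localization exactly $\bT(A)$; once that is in place, everything else is a direct invocation of Proposition \ref{r.sandwich0}. I expect no genuine obstacle, since Noetherianity of $\bT_+(A)$ (cited from \cite{GW}) already ensures the Ore condition, and the explicit monomial basis makes the identification $\bT_+(A)X^{-1} \cong \bT(A)$ transparent.
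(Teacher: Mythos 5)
Your proof is correct and is essentially the argument the paper intends: it states Corollary \ref{r.sandwich} as an immediate consequence of Proposition \ref{r.sandwich0}, with the implicit steps being exactly the ones you spell out, namely that the multiplicative set $X$ generated by the $x_i$ is a two-sided Ore set of $\bT_+(A)$ (via the $q$-commutation relation \eqref{e.normalizedtorus}), that $\bT_+(A)X^{-1}=\bT(A)$ by the monomial basis \eqref{eq.grading}, and that $\bT_+(A)$ is an Ore domain by Noetherianity. Applying Proposition \ref{r.sandwich0}(b) to both sandwiched rings, as you do, completes the identification of the division algebras.
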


A subset $S$ of an $R$-algebra $D$ is said to {\em weakly generate} $D$ if $D$ is generated as an $R$-algebra by $S$ and the inverses of all invertible elements in $S$.
For example, $D$ weakly generates its Ore localization $D X^{-1}$.  
Clearly an $R$-algebra homomorphism $D \to D'$ is totally determined by its values on a set weakly generating $D$.



\subsection{Reflection anti-involution} \lbl{sec.reflection} The following is easy to prove, see \cite{Le3}.

\def\hchi{\hat \chi}
\begin{proposition} \lbl{r.reflection}   Assume that there is a $\BZ$-algebra homomorphism $\chi: R \to R$ such that $\chi(q^{1/2})= q^{-1/2}$ and $\chi^2= \id$, the identity map.
Suppose $A \in \Mat(I \times I, \BZ)$ is antisymmetric. There exists a unique
 $\BZ$-linear isomorphism $\hchi: \bT(A) \to \bT(A)$ such that $\hchi(r x^\bk )= \chi(r) x^\bk$ for all $r\in R$ and $\bk\in \BZ^I$, which is an anti-homomorphism, ie $\hchi(ab) = \hchi(b) \hchi(a)$. In addition, $\hchi^2=\id$.
\end{proposition}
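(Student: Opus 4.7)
The plan is to use the $\BZ^I$-graded direct sum decomposition \eqref{eq.grading} to define $\hchi$ by the given formula, and then to check the anti-homomorphism property by reducing it, via $R$-linearity, to a single computation on pairs of normalized monomials, where the identity \eqref{e.normalizedtorus} does all the work.

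First, uniqueness is immediate: since $\{x^\bk : \bk \in \BZ^I\}$ is an $R$-basis of $\bT(A)$ by \eqref{eq.grading}, any $\BZ$-linear endomorphism is determined by its values on elements of the form $r x^\bk$, so the formula $\hchi(r x^\bk) = \chi(r) x^\bk$ determines $\hchi$ uniquely. For existence, I would take this same formula as the definition (extended $\BZ$-linearly via the direct sum decomposition), which makes sense because $\chi$ is $\BZ$-linear on $R$. The relation $\hchi^2 = \id$ is then a one-line check: applying the formula twice gives $r x^\bk \mapsto \chi(r) x^\bk \mapsto \chi^2(r) x^\bk = r x^\bk$.

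The only substantive verification is that $\hchi$ is an anti-homomorphism, and this is where I would spend most of the argument. By $\BZ$-bilinearity, it suffices to check $\hchi(ab) = \hchi(b)\hchi(a)$ for $a = r x^\bk$ and $b = s x^\bn$ with $r,s \in R$ and $\bk,\bn \in \BZ^I$. Using \eqref{e.normalizedtorus} and the fact that $R$ is central in $\bT(A)$, one has
\[
ab = rs \, q^{\frac{1}{2}\langle \bk, \bn\rangle_A} x^{\bk+\bn},
\]
so
\[
\hchi(ab) = \chi(rs)\, \chi\bigl(q^{\frac{1}{2}\langle \bk, \bn\rangle_A}\bigr)\, x^{\bk+\bn} = \chi(r)\chi(s)\, q^{-\frac{1}{2}\langle \bk, \bn\rangle_A}\, x^{\bk+\bn},
\]
using $\chi(q^{1/2}) = q^{-1/2}$ and the fact that $\chi$ is a ring homomorphism. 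On the other hand, again by \eqref{e.normalizedtorus},
\[
\hchi(b)\hchi(a) = \chi(s) x^\bn \cdot \chi(r) x^\bk = \chi(s)\chi(r)\, q^{\frac{1}{2}\langle \bn, \bk\rangle_A}\, x^{\bn+\bk}.
\]
The antisymmetry of $A$ gives $\langle \bn,\bk\rangle_A = -\langle \bk, \bn\rangle_A$, and $R$ is commutative, so the two expressions agree.

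The potential obstacle here is not conceptual but notational bookkeeping: one must be careful that the sign flip coming from $\chi(q^{1/2}) = q^{-1/2}$ is exactly compensated by the sign flip coming from the antisymmetry of $\langle\cdot,\cdot\rangle_A$ when the order of multiplication is reversed. Once these two sign flips are correctly paired, the proof is essentially a one-line check. I would record this explicitly to make transparent the role of each of the hypotheses ($\chi^2 = \id$, $\chi(q^{1/2}) = q^{-1/2}$, and $A$ antisymmetric).
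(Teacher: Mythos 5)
Your proof is correct and complete. The paper itself does not spell out an argument for this proposition (it simply remarks that it is easy and defers to \cite{Le3}), but your argument is exactly the standard one being invoked: uniqueness and existence from the $R$-basis $\{x^\bk\}$ of \eqref{eq.grading}, and the anti-homomorphism property reduced to monomials, where the sign flip from $\chi(q^{1/2})=q^{-1/2}$ cancels against the sign flip from the antisymmetry of $\langle\cdot,\cdot\rangle_A$ via \eqref{e.normalizedtorus}. No gaps.
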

 We call $\hchi$ the {\em reflection
anti-involution}.

\subsection{Frobenius homomorphism}\lbl{sec.frobhom}
\begin{proposition} \lbl{r.Frobenius}
Suppose $A \in \Mat(I \times I, \BZ)$ is an antisymmetric matrix and $N$ is a positive integer.
There is a unique $R$-algebra homomorphism, called the Frobenius homomorphism,
$$
F_N: \bT(N^2A) \to \bT(A)$$
such that $F_N(x_i) = x_i^N$. Moreover,  $F_N$ is injective.
\end{proposition}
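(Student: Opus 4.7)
The plan is to verify the defining relations on generators for existence, then use the $\BZ^I$-grading \eqref{eq.grading} to establish injectivity.

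\textbf{Existence and uniqueness.} Write $y_i$ for the canonical generators of $\bT(N^2A)$ and $x_i$ for those of $\bT(A)$. Since $\bT(N^2A)$ is the universal $R$-algebra on invertible generators $y_i$ subject to $y_iy_j = q^{N^2A_{ij}}y_jy_i$, it suffices to check that the elements $x_i^N \in \bT(A)$ are invertible (which is immediate, since each $x_i$ is) and satisfy $x_i^Nx_j^N = q^{N^2A_{ij}}x_j^Nx_i^N$. From $x_ix_j = q^{A_{ij}}x_jx_i$ one gets $x_ix_j^N = q^{NA_{ij}}x_j^Nx_i$ by a straightforward induction on $N$, and then another induction yields $x_i^Nx_j^N = q^{N^2A_{ij}}x_j^Nx_i^N$, as required. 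Uniqueness of $F_N$ follows because the $y_i$ weakly generate $\bT(N^2A)$ in the sense of Section~\ref{s.qtorus}, so any $R$-algebra map out of $\bT(N^2A)$ is determined by its values on the $y_i$.

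\textbf{Injectivity.} I will compute $F_N$ on the basis $\{y^\bk \mid \bk \in \BZ^I\}$ of $\bT(N^2A)$ and show it lands in the basis $\{x^\bn \mid \bn \in \BZ^I\}$ of $\bT(A)$ in an injective way. Fix a total order on $I$. By definition of the Weyl normalization for $\bT(N^2A)$,
$$
y^\bk = q^{-\frac{1}{2}N^2\sum_{i<j}A_{ij}\bk(i)\bk(j)} \prod_{i\in I} y_i^{\bk(i)}.
$$
Applying $F_N$ and using $F_N(y_i^{\bk(i)}) = x_i^{N\bk(i)}$, and then recognizing the prefactor as the normalization constant for $[x_1^{N\bk(1)}x_2^{N\bk(2)}\cdots]$ in $\bT(A)$ (which uses $\sum_{i<j}A_{ij}(N\bk(i))(N\bk(j)) = N^2\sum_{i<j}A_{ij}\bk(i)\bk(j)$), one obtains the clean formula
$$
F_N(y^\bk) = x^{N\bk}.
$$
Since $N>0$, the map $\bk \mapsto N\bk$ is injective on $\BZ^I$, so $F_N$ sends the $R$-basis $\{y^\bk\}$ injectively into the $R$-basis $\{x^\bn\}$. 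Hence $F_N$ is $R$-linearly injective.

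The main (only) technical point is keeping the Weyl-normalization bookkeeping straight so that the prefactors introduced when rewriting $y^\bk$ as an ordered product match exactly the prefactors needed to reassemble $x^{N\bk}$ from $\prod x_i^{N\bk(i)}$; once that identity is recorded, both existence and injectivity drop out immediately.
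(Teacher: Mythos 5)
Your proposal is correct and follows essentially the same route as the paper: existence by checking the defining relations on generators, uniqueness by weak generation, and injectivity via the formula $F_N(y^\bk)=x^{N\bk}$, which sends the normalized-monomial basis injectively into the normalized-monomial basis. The paper obtains that formula by citing the identity $(x^\bk)^n=x^{n\bk}$ rather than unwinding the Weyl normalization explicitly, but the content is identical.
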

\begin{proof} As the $x_i$ weakly generate $\bT(N^2A)$, the uniqueness is clear. If we define $F_N$ on the generators $x_i$ by $F_N(x_i)= x_i^N$, it is easy to check that the defining relations are respected by  $F_N$. Hence $F_N$ gives a well-defined $R$-algebra map. 

By \eqref{eq.power} one has $F_N(x^\bk)= x^{N \bk}$. This shows $F_N$ maps the $R$-basis 
$\{ x^\bk \mid \bk \in \BZ^I\}$ of $\bT(N^2A)$ injectively onto a subset of an $R$-basis of $\bT(A)$. Hence $F_N$ is injective.
\end{proof}


\newcommand{\qbinom}[2]{\left[\begin{matrix}
#1 \\ #2
\end{matrix}  \right]}

\section{Chebyshev polynomial and quantum torus} The Chebyshev polynomials of type one $T_n(z)\in \BZ[z]$ are  defined  recursively by
\begin{align*}
  T_0& =2 , \ T_1(z)= z , \ T_n(z) = z T_{n-1}(z) - T_{n-2}(z) \quad \text{for } \ n \ge 2. \\
\end{align*}
It is easy to see that for any invertible element $K$ in a ring,
\begin{align} \label{eTn}
T_n(K + K^{-1})& = K^n + K^{-n}.
\end{align}
We want to generalize the above identity and calculate $T_n(K+ K^{-1} +E)$, when $E$ is a new variable which $q$-commutes with $K$.

 Suppose $q$ is an indeterminate. For $n\in \BZ$ and $ k \in \BN$,
define as usual the quantum integer and the quantum binomial coefficient, which are elements of $\BZ[q^{\pm 1}]$, by
$$ [n]_q:= \frac{q^n - q^{-n}}{q - q^{-1}}, \quad   \qbinom nk_q = \prod_{j=1}^k \frac{[n-j+1]_q}{[j]_q}.$$

\begin{proposition}\lbl{p.prop31}
Suppose $K$ and $E$ are variables such that $KE= q^2 EK$ and $K$ is invertible.
Then for any $n \ge 1$,
\be
\lbl{eq.38}
T_n(K+ K^{-1} +E) = K^n + K^{-n} + E^n + \sum_{r=1}^{n-1}\sum_{j=0}^{n-r}  c(n,r,j) \left[ E^r K^{n-2j-r}\right]
\ee
where $c(n,r,j) \in \BZ[q^{\pm 1}]$ and is given by
\be
\lbl{eq.cnrj}
 c(n,r,j)= \frac{[n]_q}{[r]_q} \qbinom{n-j-1}{r-1}_q \qbinom{r+j-1}{r-1}_q.
 \ee
\end{proposition}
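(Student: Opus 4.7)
The plan is induction on $n$ using the Chebyshev recurrence $T_n(z) = zT_{n-1}(z) - T_{n-2}(z)$ specialized to $z = K + K^{-1} + E$. Since $KE = q^2 EK$, the Weyl normalization satisfies $[E^rK^s] = q^{rs}E^rK^s$, from which one immediately derives the three basic actions
$$
K \cdot [E^rK^s] = q^{r}[E^rK^{s+1}],\quad K^{-1} \cdot [E^rK^s] = q^{-r}[E^rK^{s-1}],\quad E \cdot [E^rK^s] = q^{-s}[E^{r+1}K^s].
$$
The base case $n=1$ is tautological (the double sum is empty), and $n=2$ follows by direct expansion of $(K+K^{-1}+E)^2 - 2$, which yields $K^2 + K^{-2} + E^2 + [2]_q[EK] + [2]_q[EK^{-1}]$ in agreement with \eqref{eq.cnrj}.

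For the inductive step, assume \eqref{eq.38} holds for $T_{n-1}$ and $T_{n-2}$. I would apply the three actions above to every normalized monomial appearing in $T_{n-1}(K+K^{-1}+E)$, and separately process the pure boundary terms $K^{\pm(n-1)}$, $E^{n-1}$. The pure $K^{\pm n}$ and $E^n$ terms of $T_n$ emerge from the obvious boundary chains; the pure $K^{\pm(n-2)}$ from $T_{n-2}$ cancel against $K^{-1}\!\cdot\! K^{n-1}$ and $K\cdot K^{-(n-1)}$, while the pure $E^{n-2}$ of $T_{n-2}$ cancels against the $K^{\pm 1}$-actions on the $r' = n-2$ stripe of $T_{n-1}$ (a small $q$-integer computation). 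Collecting the coefficient of $[E^rK^{n-2j-r}]$ for $1 \le r \le n-1$ and $0 \le j \le n-r$ reduces the statement to the single recursion
$$
c(n,r,j) = q^{r}\,c(n-1,r,j) + q^{-r}\,c(n-1,r,j-1) + q^{-(n-2j-r)}\,c(n-1,r-1,j) - c(n-2,r,j-1),
$$
where $c(\cdot,\cdot,\cdot)$ is declared $0$ whenever any argument falls outside $r \ge 1$, $j \ge 0$, $r+j \le n$.

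I expect the main obstacle to be verifying this recurrence for the closed form \eqref{eq.cnrj}. My plan is to substitute \eqref{eq.cnrj}, clear the common factorials, and apply the two standard $q$-Pascal identities
$$
\qbinom{m}{k}_q = q^{-k}\qbinom{m-1}{k}_q + q^{m-k}\qbinom{m-1}{k-1}_q = q^{k-m}\qbinom{m-1}{k}_q + q^{-k}\qbinom{m-1}{k-1}_q
$$
to the two $q$-binomial factors of $c$, while using $[n]_q = q^{-1}[n-1]_q + q^{n-1} = q\,[n-1]_q + q^{-(n-1)}$ to handle the transition from $[n]_q$ down to $[n-1]_q$ and $[n-2]_q$. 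The delicate part is the bookkeeping of the three simultaneous index shifts $(r,j)\mapsto (r,j),(r,j-1),(r-1,j)$ together with the $q$-power weights $q^{\pm r}$ and $q^{-(n-2j-r)}$; since $\frac{[n]_q}{[r]_q}\qbinom{n-j-1}{r-1}_q$ is not itself a single $q$-binomial, the cancellations in the resulting identity are not completely transparent.

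Finally, for positivity $c(n,r,j) \in \BN[q^{\pm 1}]$, the plan is to rewrite \eqref{eq.cnrj} as a product (or an $\BN[q^{\pm 1}]$-linear combination of products) of honest $q$-binomial coefficients, each of which is known to lie in $\BN[q^{\pm 1}]$; this uses variants of $\qbinom{n}{r}_q = \frac{[n]_q}{[r]_q}\qbinom{n-1}{r-1}_q$ together with a rearrangement of the factorials. The vanishing when $q^2$ has order $n$ is immediate from \eqref{eq.cnrj}: then $q^{2n} = 1$ forces $q^n - q^{-n} = 0$ and hence $[n]_q = 0$, while $[r]_q \ne 0$ for $1 \le r \le n-1$ since the order of $q^2$ is exactly $n$; the two $q$-binomial factors are well-defined polynomials, so $c(n,r,j) = 0$.
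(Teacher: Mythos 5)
Your proposal is correct and takes essentially the same route as the paper: the paper's entire proof of Proposition \ref{p.prop31} is the single sentence ``One can easily prove the proposition by induction on $n$,'' and your induction via the recurrence $T_n = zT_{n-1}-T_{n-2}$, the three monomial actions, and the resulting coefficient recursion verified by $q$-Pascal identities is exactly the computation the authors are alluding to. One small imprecision: the pure $E^{n-2}$ term of $T_{n-2}$ does not \emph{cancel} against the $K^{\pm 1}$-actions on the $r'=n-2$ stripe of $T_{n-1}$ --- rather it combines with them (e.g.\ for $n=3$, $q[2]_q+q^{-1}[2]_q-1=[3]_q$) to produce the correct coefficient, which is the ``small $q$-integer computation'' you already flag.
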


Here $[E^aK^b]$ is the Weyl normalization of of $E^aK^b$, i.e.
$$ \left [ E^a K^b \right] := q^{ab} E^a K^b = q^{-ab} K^b E^a.$$

\begin{proof}
One can easily prove the proposition by induction on $n$.
\end{proof}
\begin{corollary}[\cite{BW1,Bonahon}]\lbl{c.32}
Suppose $q^2$ is a root of unity of order exactly  $n$, then
$$
T_n(K+ K^{-1} +E) = K^n + K^{-n} + E^n.
$$
\end{corollary}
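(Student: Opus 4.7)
The plan is to invoke Proposition \ref{p.prop31} directly and show that under the hypothesis, every coefficient $c(n,r,j)$ appearing in the double sum vanishes, so the sum collapses and only the three displayed monomials survive.

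The key observation will be a vanishing/non-vanishing dichotomy for quantum integers at our special $q$. Since $q^2$ has order exactly $n$, we have $q^{2n}=1$, hence $q^n = q^{-n}$, which gives $[n]_q = (q^n - q^{-n})/(q-q^{-1}) = 0$. On the other hand, for $1 \le r \le n-1$, we have $q^{2r} \neq 1$ by minimality of the order, so $[r]_q \neq 0$. Thus in the formula
$$c(n,r,j) = \frac{[n]_q}{[r]_q}\qbinom{n-j-1}{r-1}_q \qbinom{r+j-1}{r-1}_q,$$
the numerator factor $[n]_q$ kills the expression while the denominator $[r]_q$ is harmless.

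The small subtlety to handle carefully is that $c(n,r,j)$ is a priori presented as a ratio, but Proposition \ref{p.prop31} guarantees $c(n,r,j) \in \BZ[q^{\pm 1}]$. To deduce vanishing cleanly at our particular $q$, I would clear the denominator and work with the polynomial identity
$$[r]_q\, c(n,r,j) = [n]_q \qbinom{n-j-1}{r-1}_q \qbinom{r+j-1}{r-1}_q$$
in $\BZ[q^{\pm 1}]$. Evaluating both sides at our chosen $q$ gives $[r]_q\, c(n,r,j) = 0$, and then dividing by the nonzero scalar $[r]_q$ yields $c(n,r,j) = 0$ for every $1 \le r \le n-1$ and every $0 \le j \le n-r$.

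Substituting back into \eqref{eq.38}, all terms of the double sum disappear, leaving the claimed identity $T_n(K + K^{-1} + E) = K^n + K^{-n} + E^n$. There is no real obstacle beyond the bookkeeping above; the only place where the ``exactly $n$'' hypothesis gets used (as opposed to ``divides $n$'') is in asserting $[r]_q \neq 0$ for $1 \le r \le n-1$, and it is precisely this that makes the argument work.
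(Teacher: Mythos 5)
Your proof is correct and follows the same route as the paper: both arguments invoke Proposition \ref{p.prop31} and observe that $[n]_q=0$ while $[r]_q\neq 0$ for $1\le r\le n-1$, forcing every $c(n,r,j)$ to vanish. Your extra care in clearing the denominator via the polynomial identity $[r]_q\,c(n,r,j)=[n]_q\qbinom{n-j-1}{r-1}_q\qbinom{r+j-1}{r-1}_q$ is a sound (and slightly more scrupulous) way to justify the evaluation step that the paper treats as immediate.
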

\begin{proof}
When $q^2$ is a root of unity of order $n$, then $[n]_q=0$ but $[r]_q \neq 0$ for any $ 1\le r \le n-1$. Equation \eqref{eq.cnrj} shows that $c(n,r,j)=0$ for all $ 1\le r \le n-1$. From \eqref{eq.38} we get the corollary.
\end{proof}

\begin{remark} \lbl{rem.34}
 If $n,k\in \BN$, then $\qbinom nk_q\in \BN[q^{\pm 1}]$, the set of Laurent polynomials with non-negative integer coefficients. 
From \eqref{eq.cnrj} it follows that  $c(n,r,j) \in \BN[q^{\pm 1}]$.
\end{remark}

\def\Cx{{\BC^\times}}
\def\ord{\mathrm{ord}}
\def\Sx{\cS_\xi}
\def\cL{\mathcal L}
\def\cT{\mathcal T}
\def\hPhi{\hat \Phi}
\def\Lplu{\left(  \raisebox{-13pt}{\incl{1.2 cm}{Lplus}} \right) }
\def\Lpluss{\left(  \raisebox{-13pt}{\incl{1.2 cm}{Lpluss}} \right) }
\def\uplus{\left(  \raisebox{-13pt}{\incl{1.2 cm}{uplus}} \right) }
\def\uminus{\left(  \raisebox{-13pt}{\incl{1.2 cm}{uminus}} \right) }
\def\LLL{\left(  \raisebox{-13pt}{\incl{1.2 cm}{LLL}} \right) }
\def\Lminus{\left(  \raisebox{-13pt}{\incl{1.2 cm}{Lminus}} \right) }
\def\negkinka{\raisebox{-13pt}{\incl{1.2 cm}{negativekinka}} }
\def\negkinkb{\raisebox{-13pt}{\incl{1.2 cm}{negativekinkb}} }
\def\negkinkc{\raisebox{-13pt}{\incl{1.2 cm}{negativekinkc}} }
\def\negkinkd{\raisebox{-13pt}{\incl{1.2 cm}{negativekinkd}} }
\def\Col{\text{Col}}
 \section{Skein modules of 3-manifolds}\lbl{s.skeinmodules}
In this section we define the Kauffman bracket skein module of marked 3-manifolds, following   closely  \cite{Le3}. Recall that the ground ring $R$ is a commutative Noetherian domain, with a distinguished invertible element $q^{1/2}$ and  a $\BZ$-algebra involution $\chi:R\to R$ such that $\chi(q^{1/2})= q^{-1/2}$. In all figures throughout the paper, the framing of any tangle is assumed to be facing the reader unless otherwise specified.
\subsection{Marked 3-manifold} {\em A marked 3-manifold $\MN$} consists of
an oriented connected 3-manifold $M$  with (possibly empty) boundary $\pM$ and  a 1-dimensional oriented submanifold $\cN \subset \pM$ such that $\cN$ consists of a finite number of connected components, each of which is diffeomorphic to the interval $(0,1)$.

{\em An $\cN$-tangle $T$} in $M$ consists of a compact 1-dimensional non-oriented submanifold of $M$ equipped with a normal vector field
such that $T \cap
\cN = \partial T$ and at a boundary point $x\in \partial T \cap  \cN$, the normal vector is tangent to $\cN$ and agrees with the orientation of $\cN$. Here a normal vector field is a vector field which is not co-linear with the tangent space of $T$ at any point. This vector field is called the {\em framing} of $T$, and the vectors are called {\em framing vectors} of $T$.
 Two $\cN$-tangles are {\em $\cN$-isotopic} if 
 they are isotopic  through the class of  $\cN$-tangles. The empty set is also considered a  $\cN$-tangle which is $\cN$-isotopic only to itself.

\def\Se{\cS_\ve}
\def\Sx{\cS_\xi}
\def\tSe{\widetilde{\cS}_\ve}
\def\tSx{\widetilde{\cS}_\xi}
\def\Rel{\mathrm{Rel}}
 \subsection{Kauffman bracket skein modules} \lbl{s.def00}
Let $\cT\MN$ be the $R$-module freely spanned by the $\cN$-isotopy classes of $\cN$-tangles in $M$. The {\em Kauffman bracket skein module}  $\cS\MN$ is the quotient $\cS\MN= \cT\MN/\Rel_q$ where $\Rel_q$ is the $R$-submodule of $\cT\MN$ spanned by the skein relation elements, the trivial loop relation elements, and the trivial arc relation elements, where
 \begin{itemize}
 \item  $R$ is identified with the  $R$-submodule of $\cT\MN$ spanned by the empty tangle, via $c \to c\cdot \emptyset$,
 \item  a {\em skein relation element} is any element of the form  $T  -q T_+ - q^{-1} T_-$, where $T ,T_+,T_-$  are $\cN$-tangles  identical everywhere except in a ball in which they look like in Figure \ref{fig:skein1},

 \FIGc{skein1}{From left to right:  $T, T_+, T_-$.}{1.2cm}
\item a {\em trivial loop relation element} is any element of the form $\beta + q^2 + q^{-2}$, where $\beta$ is a trivial knot, i.e. a loop bounding a disk in $M$ with framing perpendicular to the disk.

\item   a {\em trivial arc relation element} is any $\cN$-tangle $T$ containing an $\cN$-arc $a$ for which there exists an arc $b \subset \cN$ such that $a \cup b$ bounds a continuous disk $D$ in $M$ such that $D \cap (T \setminus a) = \emptyset$. This situation is depicted in Figure \ref{fig:trivialarc}.

\FIGc{trivialarc}{The gray shaded area is a region of $\partial M$, the striped region is a continuous disk with boundary $a \cup b$, and $b \subset \cN$.}{1.5cm}
\end{itemize}

These relation elements are also depicted in Figure \ref{fig:skein} in the Introduction.

By \cite[Proposition 3.1]{Le3}, one also has the
 {\em  reordering relation} depicted in Figure \ref{fig:boundary} in $\cS\MN$.
\FIGc{boundary}{ Reordering relation: Here   $\cN$ is perpendicular to the page and  its perpendicular projection onto the page (and in the shaded disk) is the bullet denoted by $N$. The vector of orientation of $\cN$ is pointing to the reader.
There are two strands of the tangle coming to $\cN$ near $N$, with the lower one being depicted by the broken line}{2cm}

\begin{remark} Muller \cite{Muller} introduced Kauffman bracket skein modules for marked surfaces. Here we use a generalization of Muller's construction to marked 3-manifolds, introduced in \cite{Le3}. \end{remark}

\def\MNp{(M',\cN')}
\subsection{Functoriality}\lbl{sec.func}
 By a {\em morphism} $f: \MN \to \MNp$ between marked 3-manifolds 
we mean an orientation-preserving embedding $f: M \embed M'$ such that $f$ restricts to an orientation preserving embedding on $\cN$. Such a morphism induces  an $R$-module homomorphism $f_*: \cS\MNp \to \cS\MNp$ by $f_*(T)=T$ for any $\cN$-tangle $T$.

Given marked 3-manifolds $(M_i, \cN_i)$, $i=1,\dots, k$, such that $M_i\subset M, \cN_i \subset \cN$, and the $M_i$ are pairwise disjoint, then there is a unique $R$-linear map, called the {\em union map}
$$ \text{Union}:\prod_{i=1}^k \cS(M_i, \cN_i) \to \cS(M,\cN),$$
such that if $T_i$ is a $\cN_i$-tangle in $M_i$ for each $i$, then
$$ \text{Union} (T_1,\dots, T_k) = T_1 \cup \dots \cup T_k.$$
For $x_i\in \cS(M_i,\cN_i)$ we denote $\text{Union} (x_1,\dots, x_k)$ also by $x_1 \cup \dots \cup x_k$.

\def\ocP{\mathring {\cP}}

\def\PDDp{\Psi_{\D,\D'}}
\def\PSSp{\PSi_{\Sigma,\Sigma'}}

\section{Marked Surfaces}
\lbl{sec:markedsurface}


Here we present basic facts about marked surfaces and their quasitriangulations. Our marked surface is the same as a marked surface in \cite{Muller}, or a  ciliated surface in \cite{FG}, or a bordered surface with punctures in \cite{FST} if one consider a boundary component without amy marked points on it as a puncture.

\def\pr{\operatorname{pr}}

\subsection{Marked surface} \lbl{sec.41}

{\em A marked surface $\SM$} consists of
 a compact, oriented, connected surface $\Sigma$ with possibly empty boundary $\partial \Sigma$ and a finite set $\cP \subset  \pS$.  Points in $\cP$ are called {\em marked points}.
 A connected component of $\pS$ is {\em marked} if it has at least one marked point, otherwise it is {\em unmarked}. The set of all unmarked components is denoted by $\Hu$.  We call $\SM$ {\em totally marked} if $\Hu=\emptyset$, i.e. every boundary component has at least one marked point.

A {\em $\cP$-tangle} is an immersion $T: C\to \Sigma$, where $C$ is compact 1-dimensional non-oriented manifold, such that
\begin{itemize}
\item the restriction of $T$ onto the interior of $C$ is an embedding into  $\Sigma \setminus \cP$, and
\item $T$ maps the boundary of $C$ into $\cP$.
    \end{itemize}
    The image of a connected component of $C$ is called a {\em component} of $T$. When $C$ is homeomorphic to $S^1$, we call $T$ a {\em $\cP$-knot}, and when $C$ is $[0,1]$, we call $T$ a {\em $\cP$-arc}. 
Two $\cP$-tangles are {\em $\cP$-isotopic} if they are isotopic through the class of $\cP$-tangles.

\begin{remark}
We emphasize here that, unlike $\cN$-tangles in a marked 3-manifold $(M,\cN)$, a $\cP$-tangle $T: C \to \Sigma$ cannot actually be ``tangled'' since the restriction of $T$ to the interior of $C$ is an embedding into $\Sigma \setminus \cP$. We justify this terminology since we define the skein algebra of a surface in terms of $\cP \times (-1,1)$-tangles in $(\Sigma \times (-1,1), \cP \times (-1,1))$ and use $\cP$-tangles primarily as a tool to define and assist in working with $\cP \times (-1,1)$-tangles, such as the preferred $R$-basis $B_{\SM}$ of the skein algebra, see Subsection \ref{sec.sksurface}.
\end{remark}

A $\cP$-arc $x$ is called a {\em boundary arc}, or $x$ is {\em boundary}, if it is $\cP$-isotopic to a $\cP$-arc contained in $\pS$.
 A $\cP$-arc $x$ is called an {\em inner arc}, or $x$ is {\em inner}, if it is not a boundary arc.
 
 A $\cP$-tangle $T \subset \Sigma$ is {\em essential} if it does not have a component bounding a disk in $\Sigma$; such a component is either a smooth trivial knot in $\Sigma \setminus \cP$, or a $\cP$-arc bounding a disk in $\Sigma$. By convention, the empty set is considered an essential $\cP$-tangle.

\def\thD{\theta_\D}

\def\aa{d}
\subsection{Quasitriangulations} In most cases, a marked surface can be obtained by gluing together a collection of triangles and holed monogons along edges. Such a decomposition is called a quasitriangulation. We now give a formal definition. For details see \cite{Penner}.

A marked surface $\SM$ is said to be {\em quasitriangulable} if 
\begin{itemize}
\item there is at least one marked point, and
\item  $(\Sigma, \cP)$ is not a disk with $\le 2$ marked points, or an annulus with one marked point.
\end{itemize}

A {\em quasitriangulation} $\D$  of a quasitriangulable marked surface $\SM$, also called a {\em $\cP$-quasitriangulation} of $\Sigma$, is a collection of $\cP$-arcs such that
\begin{itemize}
\item[(i)] no two $\cP$-arcs in $\D$ intersect in $\Sigma \setminus \cP$ and no two are $\cP$-isotopic, and
\item[(ii)] $\D$ is maximal amongst all collections of $\cP$-arcs with the above property.
\end{itemize}

\def\bd{{\mathrm{bd}}}
\def\mon{{\mathrm{mon}}}
\def\sinn{{\mathrm{s.in}}}

 An element of $\D$ is also called an {\em  edge} of the $\cP$-quasitriangulation $\D$. Let $\D_\bd$ be the set of all {\em boundary edges}, i.e. edges which are boundary $\cP$-arcs. The complement $\D_\inn:= \D\setminus \D_\bd$ is the set of all {\em inner edges}. 
Then   $\D_\inn$ cuts $\Sigma$ into triangles  and {\em holed monogons} (see \cite{Penner} for exactly what is meant by this). Here a holed monogon is a region in $\Sigma$ bounded by an unmarked component of $\pS$ and a $\cP$-arc, see Figure \ref{fig:hole}.

\FIGc{hole}{Monogon bounded by $\cP$-arc $a_\beta$. The inner loop is a unmarked component $\beta$ of $\pS$, i.e. $\beta \in \cH$.}{1.3cm}

For an unmarked component $\beta\in \Hu$ let $a_\beta\in \D$ be the only edge on the boundary of the monogon containing $\beta$. We call $a_\beta$ the {\em monogon edge} corresponding to $\beta$, see Figure \ref{fig:hole}.  Denote by $\D_\mon \subset \D$ the set of all monogon edges.

The situation simplifies if $\SM$ is totally marked and quasitriangulable, i.e. $\Hu = \emptyset$, and $\SM$ is not a disk with $\le 2$ marked points. Then we don't have any monogon, and
instead of ``quasitriangulable" and ``quasitriangulation" we use the terminology ``triangulable" and ``triangulation". Thus every triangulable surface is totally marked.

\def\Rp{R[\Hu]}

\subsection{Vertex matrix} \lbl{sec.vmatrix} Suppose $a$ and $b$ are $\cP$-arcs which do not intersect in $\Sigma \setminus \cP$.  We define a number $P(a,b)\in \BZ$ as follows.  Removing an interior point of $a$ from $a$, we get two {\em half-edges} of $a$, each of which is incident to exactly one vertex in $\cP$. Similarly, removing an interior point of $b$ from $b$, we get two {\em half-edges} of $b$. Suppose $a'$ is a half-edge of $a$ and $b'$ is a half-edge of $b$, and $p\in \cP$. If one of $a', b'$  is not incident to $p$, set $P_p(a',b')=0$. If both $a', b'$ are incident to $p$, define
$P_p(a',b')$ as in Figure \ref{fig:vmatrix2}, i.e.
 $$P_p(a',b')= \begin{cases}
1  &\text{if $a'$ is clockwise to $b'$ (at vertex $p$)}\\
-1  &\text{if $a' $ is counter-clockwise to $b'$ (at vertex $p$)}.
\end{cases}
$$
\FIGc{vmatrix2}{$P_p(a',b')=1$ for the left case, and $P_p(a',b')=-1$ for the right one. Here the shaded area is part of $\Sigma$, and the arrow edge is part of a boundary edge. There might be other half-edges incident to $p$, and they maybe inside and outside the angle between $a'$ and $b'$. }{2.5cm}

Now define 
$$ P(a,b)= \sum P_p(a',b'),$$
where the sum is over all $p\in \cP$, all half-edges $a'$ of $a$, and all half-edges $b'$ of $b$.

Suppose $\D$ is a quasitriangulation of a  quasitriangulable marked surface $\SM$. Two distinct $a, b \in \D$ do not intersect in $\Sigma \setminus \cP$, hence we can define $P(a,b)$.
Let $P_\D \in \Mat(\D \times \D, \BZ)$, called the {\em vertex matrix} of $\D$, be the anti-symmetric $\D\times \D$ matrix defined by $P_\D(a,b)= P(a,b)$, with 0 on the diagonal.

\def\thD{\theta_\D}
\def\aa{d}
\def\Rp{R_\partial}

\begin{remark}
 The vertex matrix was  introduced in \cite{Muller}, where it is called the orientation matrix.
\end{remark}

\subsection{Intersection index}\lbl{sec.intersectionindex}

Given two 
 $\cP$-tangles $S,T$, the {\em intersection index} $\mu(S,T)$ is the minimal number of crossings between $S'$ and $T'$, over all transverse pairs $(S',T')$ such that $S'$ is  $\cP$-isotopic to $S$ and $T'$ is $\cP$-isotopic to $T$. Intersections at marked points are not counted. We say that $S$ and $T$ are {\em taut} if the number of intersection points of $S$ and $T$ in $\Sigma \setminus \cP$ is equal to $\mu(S,T)$. 

\begin{lemma}[\cite{FHS}]\lbl{l.FHS} Let $x_1,x_2,\ldots,x_n$ be a finite collection of essential $\cP$-tangles, then there are essential $\cP$-tangles $x_1',x_2',\ldots,x_n'$ such that,

\begin{itemize}
\item for all $i$, $x_i'$ is $\cP$-isotopic to $x_i$, and
\item for all $i$ and $j$, $x_i'$ and $x_j'$ are taut.
\end{itemize}
\end{lemma}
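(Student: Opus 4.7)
The plan is to invoke a geometric representatives argument, namely the classical hyperbolic straightening method. First, I would equip $\Sigma \setminus \cP$ with a complete finite-area hyperbolic structure in which each marked point $p \in \cP$ is a cusp and each unmarked boundary component $\beta \in \cH$ is a geodesic boundary circle. Such a metric exists precisely when $\SM$ avoids the few low-complexity exceptional cases (a disk with very few marked points, an annulus with none or one marked point, a sphere with few cusps), and those sporadic cases are straightforward to check by hand since the essential $\cP$-tangles there have very limited topological types.

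Next, for each essential $\cP$-tangle $x_i$, I would replace each of its components by its unique geodesic representative in its $\cP$-isotopy class: a $\cP$-knot component becomes the closed geodesic freely homotopic to it, and a $\cP$-arc component becomes the unique geodesic arc relative to its endpoint cusps in its relative homotopy class. Essentiality guarantees these geodesic representatives are nondegenerate (do not collapse to a point or to a peripheral arc around a cusp/geodesic boundary). Let $x_i'$ denote the resulting $\cP$-tangle; a small perturbation if needed puts the whole family in generic transverse position.

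The key geometric input is that any two distinct geodesic simple curves in a hyperbolic surface realize the minimum intersection number inside $\Sigma \setminus \cP$ within their (relative) homotopy classes, since two geodesics cannot cobound an embedded bigon — a bigon would lift to a bigon between two geodesics in $\mathbb{H}^2$, which is impossible. Applied to every pair $(x_i', x_j')$ simultaneously, this yields $\mu(x_i', x_j') = \mu(x_i, x_j)$ equal to the actual transverse intersection count in $\Sigma \setminus \cP$, which is exactly the taut condition. Intersections at the marked points (cusps) are excluded from the count as required.

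The main obstacle I anticipate is the bookkeeping at the cusps, since $\cP$-arcs end at marked points and two arcs sharing an endpoint have geodesic representatives running into the same cusp; one must verify that such asymptotic parallel rays do not produce additional ``ideal'' bigons forcing extra intersections in the interior. This is handled by a standard horocyclic cross-section argument: cut off a small horoball neighborhood of each cusp and reduce to a compact surface-with-boundary problem where the bigon criterion applies cleanly. A purely topological alternative, avoiding hyperbolic geometry entirely, is an innermost-bigon removal induction on the total number of crossings of the collection $\{x_i\}$, which is the route taken in \cite{FHS}; either approach suffices.
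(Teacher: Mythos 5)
The paper gives no proof of this lemma at all --- it is quoted directly from the cited reference --- and your hyperbolic-straightening argument is precisely the standard proof of that result, so there is nothing to object to in the overall strategy. Your handling of the two delicate points (arcs asymptotic into a common cusp, and the exceptional low-complexity surfaces with no suitable hyperbolic structure) is the right one.

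One detail worth making explicit: an essential $\cP$-tangle may contain several mutually $\cP$-isotopic components (e.g.\ parallel copies of a knot, or of an arc), and these all straighten to the \emph{same} geodesic, so the naive replacement $x_i \mapsto x_i'$ is not an embedding on the interior and hence not a $\cP$-tangle. You must push the parallel copies apart inside a small annular (resp.\ rectangular) neighborhood of the common geodesic, and then check that this perturbation introduces no bigons with the other straightened components --- which it does not, since a bigon between a pushed-off copy and another geodesic would yield a bigon (or an ideal bigon at a cusp, excluded by your horoball argument) between the underlying geodesics. Your phrase ``a small perturbation if needed puts the whole family in generic transverse position'' gestures at this but conflates transversality with minimality; with that point spelled out, the argument is complete.
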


\section{Skein algebras of marked surfaces}

Throughout this section we fix a quasitriangulable 
marked surface $\SM$. The main result of this section is Theorem \ref{r.torus} which shows that the skein algebra of a quasitriangulable marked surface can be embedded into a quantum torus. We also discuss the flip operation on quasitriangulations.

\subsection{Skein module of a marked surface}\lbl{sec.sksurface} Let $M$ be the cylinder over $\Sigma$ and $\cN$ the cylinder over $\cP$, i.e. $M= \Sigma \times (-1,1)$ and $\cN= \cP \times (-1,1)$.
 We consider $\MN$ as a marked 3-manifold, where the orientation on each component of $\cN$  is given by the natural orientation of $I$. We will consider $\Sigma$ as a subset of $M$ by identifying $\Sigma$ with $\Sigma \times \{0\}$. There is a vertical projection $\pr:M \to \Sigma$, mapping $(x,t)$ to $x$.

Define $\cS\SM:= \cS\MN$. Since we fix $\SM$, we will use the notation $\cS=\cS\SM$ for the remainder of this section unless clarification is needed.

 An $\cN$-tangle $T$ in $M$ is said to have {\em vertical framing} if the framing vector at every point $p\in T$ is vertical, i.e. it is tangent to $p \times (-1,1)$ and has direction agreeing with the positive orientation of $(-1,1)$. 
 
 Suppose $T \subset  \Sigma$ is a $\cP$-tangle. Technically $T$ may not be an $\cN$-tangle in $M$  since several strands of $T$ may meet at the same point in $\cP$, which is forbidden in the definition of an $\cN$-tangle. We modify $T$ in  a small neighborhood of each point $p\in \cP$ by vertically moving the strands of $T$ in that neighborhood,  to get an $\cN$-tangle $T'$ in $M=\Sigma \times (-1,1)$ as follows. Equip $T$ with the vertical framing. If at a marked point $p$ there are $k=k_p$ strands
 $a_1, a_2, \dots, a_k$ of $T$  (in a small neighborhood of $p$) incident to $p$ and ordered in clockwise order, then we $\cN$-isotope these strands vertically so that $a_1$ is above $a_2$, $a_2$ is above $a_3$, and so on, see Figure \ref{fig:simul}. The resulting $T'$ is an $\cN$-tangle whose $\cN$-isotopy class only depends on the $\cP$-isotopy class of $T$. Define $T$ as an element in $\cS$ by
  \be
  \lbl{eq.simul}
   T: = q^{\frac 14 \sum_{p\in \cP} k_p(k_p-1)} T' \in \cS.
   \ee
 
  \FIGc{simul}{Left: There are 3 strands $a_1, a_2, a_3$ of $T$ coming to $p$, ordered clockwise. Right: The corresponding strands $a'_1, a'_2, a'_3$ of $T'$, with $a'_1$ above $a'_2$, and $a'_2$ above $a'_3$. Arrowed edges are part of the boundary, not part of the $\cP$-tanlges}{2cm}

The factor which is a power of $q$ on the right hand side is introduced so that $T$ is invariant under the reflection involution, see Subsection \ref{sec.involution} below.

\def\BB{B} 
\def\supp{\mathrm{supp}}

The set  $\BB_{\SM}$  of all $\cP$-isotopy classes of essential $\cP$-tangles in $\SM$ is a basis of the free $R$-module $\cS$, see
\cite[Lemma 4.1]{Muller}, and we will call $\BB_{\SM}$ the {\em preferred basis} of $\cS$.
For $0\neq   x\in \cS$ one has the finite presentation 
$$ x= \sum _{i \in I} c_i x_i,  \quad   c_i \in R\setminus \{0\}, \ x_i \in \BB_{\SM},$$
and we define the {\em support} of $x$ to be the set $\supp(x)= \{ x_i \mid i\in I\}$. For $z\in B_{\SM}$ define
\be 
 \mu(z,x)=   \max_{x_i \in \supp (x) } \mu (z, x_i).
 \ee
Here $\mu(z,x_i)$ is the geometric intersection index defined in Subsection \ref{sec.intersectionindex}.
 
\begin{remark}
Equation \eqref{eq.simul} describes the isomorphism between Muller's definition of the skein algebra of a totally marked surface $\SM$ in terms of multicurves of knots and arcs in $\SM$ and our definition of a skein algebra of a marked surface $\SM$ in terms of $\cP \times (-1,1)$-tangles in $(\Sigma \times (-1,1), \cP \times (-1,1))$.
\end{remark}

\subsection{Algebra structure and reflection anti-involution}\lbl{sec.involution}
For $\cN$-tangles $T_1, T_2$ in $(M,\cN)=(\Sigma \times (-1,1), \cP \times (-1,1))$ define the product
 $T_1 T_2$ as the result of stacking $T_1$ atop $T_2$ using the cylinder structure of $\MN$. More precisely, this means the following. Let $\iota_1: M \embed M$ be the embedding $\iota_1(x,t)= (x, \frac{t+1}2)$ and $\iota_2: M \embed M$ be the embedding
 $\iota_2(x,t)= (x, \frac{t-1}2)$. Then $T_1 T_2:= \iota_1(T_1) \cup \iota_2(T_2)$.
This product makes $\cS$ an $R$-algebra, which is non-commutative in general.

\def\refl{\mathrm{refl}}

Let $\hchi: \cS \to \cS$ be the bar homomorphism of \cite{Muller}, i.e. the $\BZ$-algebra anti-homomorphism defined by (i) $\hchi(x) = \chi(x)$ if  $x\in R$, and (ii) if $T$ is an $\cN$-tangle with framing $v$ then $\hchi(T)$ is $\refl(T)$ with the framing $-\refl(v)$, where $\refl$ is  the reflection which maps $(x,t)\to (x, -t)$ in $\Sigma \times (-1,1)$. It is clear that $\hchi$ is an anti-involution. An element $z\in \cS$ is {\em reflection invariant} if $\hchi(z)=z$.

The prefactor on the right hand side of \eqref{eq.simul} was introduced so that every $\cP$-tangle $T$  is reflection invariant as an element of $\cS$. The preferred basis $\BB_{\SM}$ consists of reflection invariant elements.

Suppose  $T$ is a $\cP$-tangle with components $x_1,\dots, x_k$. By the reordering relation (see Figure \ref{fig:boundary}), any two components $x_i, x_j$ are $q$-commuting as elements of $\cS$ (and note that $x_ix_j=x_jx_i$ if at least one is a $\cP$-knot), and
$$ T= [x_1 x_2 \dots x_k]  \quad \text{in }\ \cS,$$
where on the right hand side we use the Weyl normalization, see Subsection \ref{sec.weylnormalization}.

\subsection{Functoriality} \lbl{sec.surfunc}

Let $(\Sigma',\cP')$ be a marked surface such that $\Sigma'\subset \Sigma$ and $\cP'\subset \cP$. The morphism $\iota: (\Sigma',\cP') \embed \SM$ given by the natural embedding induces an $R$-algebra homomorphism $\iota_*:\cS(\Sigma', \cP')\to \cS\SM$.

\begin{proposition} \lbl{r.func} Suppose $\cP'\subset \cP$. Then   $\iota_*:\cS(\Sigma, \cP')\to \cS\SM$ is injective.
\end{proposition}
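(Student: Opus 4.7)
The plan is to exploit the preferred $R$-bases $\BB_{(\Sigma,\cP')}$ and $\BB_\SM$ of the two skein algebras from Subsection \ref{sec.sksurface}. Since both are free $R$-modules on these bases, it is enough to show that $\iota_*$ restricts to an injection of basis sets $\BB_{(\Sigma,\cP')} \hookrightarrow \BB_\SM$. This reduces to three short checks: that the Weyl normalization prefactors agree, that essentiality is preserved, and that $\cP$-isotopy of $\cP'$-tangles is no coarser than $\cP'$-isotopy.

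First, after a small perturbation off the finite set $\cP \setminus \cP' \subset \pS$, every $\cP'$-tangle is also a $\cP$-tangle. The Weyl-normalization prefactor $q^{\frac{1}{4}\sum_p k_p(k_p-1)}$ from \eqref{eq.simul} is the same whether the sum runs over $\cP'$ or $\cP$, since every $p \in \cP \setminus \cP'$ contributes $k_p=0$, and the associated $\cN$-tangle $T'$ produced by the pushing procedure is identical in both pictures. Thus $\iota_*$ sends a preferred basis element to a preferred basis element, not merely to a scalar multiple of one.

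Next I would verify that essentiality passes from the $\cP'$-setting to the $\cP$-setting. Triviality of a knot component is intrinsic to the topology of $\Sigma$ and independent of the marked set. A $\cP'$-arc $a$ is inessential precisely when, after lifting to $\Sigma \times (-1,1)$, it bounds a disk together with an arc lying in a single connected component of $\cP' \times (-1,1)$; since each such component is one fiber $\{p\} \times (-1,1)$, this forces $a$ to be a loop at some $p \in \cP'$ bounding a disk in $\Sigma$. Exactly the same condition characterizes inessentiality of $a$ viewed as a $\cP$-arc, because the endpoint $p$ also lies in $\cP$. Hence $\iota_*$ carries essential $\cP'$-tangles to essential $\cP$-tangles.

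Finally, for injectivity on isotopy classes, any $\cP$-isotopy between two $\cP'$-tangles keeps the tangle interior inside $\Sigma \setminus \cP \subset \Sigma \setminus \cP'$, and since $\cP'$ is discrete the endpoints remain fixed throughout; such a family is already a $\cP'$-isotopy. Combined with the previous steps, this yields the desired injection of basis sets, hence the injectivity of $\iota_*$. The only step with any real content is the essentiality check, which ultimately rests on the simple observation that each connected component of $\cN$ sits over a single marked point; the remaining two steps are essentially formal.
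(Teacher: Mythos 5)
Your proof is correct and follows exactly the paper's argument: the paper's entire proof is the one-line observation that the preferred basis $B_{(\Sigma,\cP')}$ is a subset of $B_{(\Sigma,\cP)}$, and your three checks (matching Weyl prefactors, preservation of essentiality, and $\cP$-isotopy restricting to $\cP'$-isotopy) are precisely the details behind that observation.
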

\begin{proof}This is because the preferred basis $B_{(\Sigma,\cP')}$ is a subset of $B_{\SM}$.
\end{proof}


\def\fM{\mathfrak M}
\def\cB{\Hu}

\def\tvpD{\tilde {\vp}_\D}
\def\ttS{\widetilde{\cS}}
\def\inqx{\text{in}_{q,x}}
\def\vD{\varphi_\D}
\def\vDp{\tau}
\def\vDpt{\tilde{\tau}}
\def\fm{\mathfrak m}
\def\Rb{{R[\Hu]}}
\def\bfM{\widetilde {\fM}}
\def\fn{\mathfrak n}

\subsection{Quantum torus associated to the vertex matrix for a marked surface}\lbl{sec.generalqtorus} Recall that an unmarked component is 
 a connected component of $\pS$ not containing any marked points, and $\Hu$ is the set of unmarked components. 
It is clear that every $\beta\in \Hu$ is  in the center of $\cS$. Note that two distinct elements of $\Hu$ are not $\cP$-isotopic since otherwise $\Sigma$ is an annulus with $\cP=\emptyset$, which is ruled out since $\SM$ is quasitriangulable. For $\bk \in \BN^\Hu$, define the following element of $\cS$: 
$$ \Hu^\bk:= \prod_{\beta \in \Hu} \beta^{\bk(\beta)} \in \cS.$$
The set $\{ \Hu^\bk \mid \bk \in \BN^\Hu\}$ is a subset of the preferred basis $\BB_{\SM}$. It follows  that the polynomial ring $R[\Hu]$ in the variables $\beta \in \Hu$ with coefficients in $R$ embeds as an $R$-subalgebra of $\cS$.  We will identify $R[\Hu]$ with this subalgebra of  $\cS$. Then $R[\Hu]$ is a subalgebra of the center of $\cS$, and hence we can consider $\cS$ as an $R[\Hu]$-algebra.

Let $\D$ be a quasitriangulation of $\SM$. By definition, each $a\in \Delta$ is a $\cP$-arc, and can be considered  as an element of the skein algebra $\cS$. From the reordering relation  we see that for each pair of $\cP$-arcs $a,b\in \D$,
\be
\lbl{eq.35}
ab = q^{P(a,b)} ba,
\ee
where $P\in \Mat(\D\times \D,\BZ)$ is the vertex matrix (see Subsection \ref{sec.vmatrix}).

 Let $\sX(\D)$ be the quantum torus over $R[\Hu]$ associated to $P$ with basis variables $X_a$, $a \in \D$.
That is,
\begin{align*}
\sX(\D) & = R[\Hu]\langle X_a^{\pm 1}, a \in \D\ra
/(X_aX_b=q^{P(a,b)}X_bX_a)
\end{align*}

As a free $R[\Hu]$-module, $\XD$ has a basis given by $\{X^\bn \mid \bn \in \BZ^\D\}$. As a free $R$-module,
$\XD$ has a basis given by $\{\Hu^ \bk \, X^\bn \mid \bk\in \BN^\Hu, \bn \in \BZ^\D\}$.

Let $\sX_+(\D)$ be the $\Rb$-subalgebra of $\XD$ generated by $X_a, a\in \D$. Then $\sX_+(\D)$ is a free $\Rb$-module with a basis given by $\{ X^\bn \mid \bn \in \BN^\D\}$ and a free $R$-module with preferred basis $B_{\D,+}:=\{\Hu^ \bk \, X^\bn \mid \bk\in \BN^\Hu, \bn \in \BN^\D\}$. Furthermore, $\sX_+(\D)$ has the following presentation as an algebra over $\Rb$: 
  $$ \sX_+(\D)= \Rb\la X_a , a\in \D\ra /( X_a X_b = q^{P(a,b)}X_b X_a).$$

The involution $\chi:R \to R$ extends to an involution $\chi : R[\Hu] \to R[\Hu]$ by $\chi(rx)= \chi(r) x$ for all $r\in R$ and $x = \Hu^\bk$ for all $\bk \in \BN^\Hu$. As explained in Subsection \ref{sec.reflection}, $\chi$ extends to 
an anti-involution $\hchi:\XD \to \XD$ so that $\hchi(x)=\chi(x)$ for $x\in R[\Hu]$ and $\hchi(X^\bk)= X^\bk$.

\subsection{Embedding of $\cS$ in the quantum torus $\sX(\D)$}\lbl{sec.torus}

 The following extends a result of 
 Muller \cite[Theorem 6.14]{Muller} for {\em totally} marked surfaces to the case of marked surfaces.

\begin{theorem}\lbl{r.torus} Suppose the marked surface $\SM$ has a quasitriangulation $\D$.

(a) There is a unique $R[\Hu]$-algebra embedding  $\vpD: \cS \embed \sX(\D)$ such that for all $a \in \D$, 
\be 
\vpD(a) = X_a.   \lbl{eq.vpD}
\ee 

(b) If we identify $\cS$ with its image under $\vpD$ then $\cS$ is sandwiched between $\sX_+(\D)$ and $\sX(\D)$, i.e.
\be 
\sX_+(\D) \subset \cS \subset \XD. \lbl{eq.incl2}
\ee
Consequently $\cS$ is a two-sided Ore domain, and  $\vpD$ induces an $\Rb$-algebra isomorphism 
$$ 
\tvpD : \widetilde \cS\SM \overset \cong \longrightarrow \widetilde \sX(\D),
$$
 where  $\widetilde \cS\SM$ and $ \widetilde \sX(\D)$ are the division algebras of $\cS\SM$ and $\sX(\D)$, respectively.

(c) Furthermore, $\vpD$ is reflection invariant, i.e. $\vpD$ commutes with $\hchi$.

\end{theorem}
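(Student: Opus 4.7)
The plan is to extend Muller's proof of the totally marked case \cite{Muller} by treating the holed monogons appearing in a quasitriangulation when $\Hu \neq \emptyset$ as an additional building block. To construct $\vpD$, I would first use Lemma~\ref{l.FHS} to put an essential $\cP$-tangle $T$ in taut position with respect to every edge of $\D$, then apply the Kauffman skein relation at each crossing of $T$ with an inner edge of $\D$ to expand $T$ in $\cS$ as an $R$-linear combination of $\cP$-tangles supported inside the triangular and monogonal faces of $\D$. Inside a triangle, the trivial arc relation together with the reordering relation of Figure~\ref{fig:boundary} force the resulting tangle to reduce to a Weyl-ordered monomial in the three boundary edges, exactly as in Muller. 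Inside a holed monogon bounded by $a_\beta$ with $\beta \in \Hu$, the same relations together with the trivial loop relation reduce any essential $\cP$-tangle there to a monomial of the form $[a_\beta^m]\beta^k$ with $m,k \ge 0$; this is the genuinely new local computation. Assembling these local contributions defines the target element $\vpD(T) \in \sX(\D)$, and $\vpD(a) = X_a$ for $a \in \D$ holds by construction. Well-definedness (independence from the taut representative and the order in which crossings were resolved) and compatibility with the stacking product both reduce to matching the commutation relation \eqref{eq.35} on the skein side with $X_a X_b = q^{P(a,b)} X_b X_a$ on the quantum torus side, which is exactly how $\sX(\D)$ was defined.

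For injectivity (completing (a)), I would use a leading-term argument in the $\BZ^\D$-grading of $\XD$ from \eqref{eq.grading}. For each preferred basis element $T \in \BB_{\SM}$, define $\ldeg(T) \in \BZ^\D$ by setting $\ldeg(T)(a) = \mu(T,a)$ on inner edges $a$ and a suitable endpoint-count on boundary edges, and check by induction on the total intersection number that
$$
\vpD(T) = c_T\, \Hu^{\bk_T}\, X^{\ldeg(T)} + (\text{strictly lower }\ldeg\text{-terms}),
$$
where $c_T$ is a unit power of $q$ in $R$ and $\bk_T \in \BN^\Hu$. Distinct essential $\cP$-tangles, put in simultaneously taut position via Lemma~\ref{l.FHS}, produce distinct intersection patterns with $\D$, and hence distinct leading monomials $X^{\ldeg(T)}$; so $\vpD$ sends the $R$-basis $\BB_{\SM}$ to a linearly independent set in $\XD$, giving injectivity. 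Part (b) then follows immediately: the right inclusion $\vpD(\cS) \subset \XD$ is built in, the left inclusion $\sX_+(\D) \subset \vpD(\cS)$ is automatic from $\vpD(a) = X_a$ together with $\vpD(\beta) = \beta$ for $\beta\in\Hu$, and Corollary~\ref{r.sandwich} applied to the subring $\vpD(\cS) \subset \XD$ delivers the Ore-domain property and the induced division-algebra isomorphism $\tvpD$.

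Part (c) is immediate from the fact that $\vpD$ is determined by its values on the weakly generating set $\D \cup \Hu$: the prefactor in \eqref{eq.simul} was chosen precisely so that each $\cP$-arc $a \in \D$ is reflection invariant in $\cS$, while each $X_a$ is reflection invariant in $\XD$ by Proposition~\ref{r.reflection}, and the $\beta \in \Hu$ are central and fixed by $\hchi$ on both sides. Since $\vpD$ and $\hchi$ are $\BZ$-linear (respectively anti-linear) and agree on a weakly generating set, they commute. The main technical obstacle I anticipate is the monogon computation together with its leading-term bookkeeping: one must show that an arc winding $n$ times around the unmarked hole $\beta$ can be expressed as a polynomial in $a_\beta$ and $\beta$ whose top-degree term in the $\BZ^\D$-grading is exactly $[a_\beta^n]$ up to an invertible scalar, so that the leading-term argument for injectivity still separates such winding tangles. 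Once this local analysis is in place, the global argument is essentially Muller's with $R$ replaced by $\Rb = R[\Hu]$.
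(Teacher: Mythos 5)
Your proposal takes a genuinely different route from the paper, and as written it has two real gaps. The paper never constructs $\vpD$ tangle-by-tangle: it first deduces that $\cS$ is a domain by embedding it into the skein algebra of a totally marked enlargement $(\Sigma,\cP')$ (Proposition \ref{r.func}) and quoting Muller, then shows that the obvious inclusion $\sX_+(\D)\hookrightarrow\cS$ becomes an isomorphism after Ore localization at the multiplicative set $\fM$ of $\D$-monomials (Lemmas \ref{r.monomial} and \ref{l.oresubset}), and finally defines $\vpD$ as the restriction to $\cS$ of the inverse isomorphism, so that injectivity is automatic. Your first step, ``apply the Kauffman skein relation at each crossing of $T$ with an inner edge of $\D$,'' is not a well-defined operation: $T$ and the edges of $\D$ both sit at height $0$ and do not together form a link diagram, so there are no crossings to resolve. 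What one can actually do is multiply $T$ in the algebra $\cS$ by the monomial $\fm=\prod a^{\mu(T,a)}$ and use \eqref{eq.mu2} to land in $\sX_+(\D)$, and then try to set $\vpD(T)=(T\fm)\,\fm^{-1}$; but showing this is independent of $\fm$ and of the taut representative, and is multiplicative, is exactly the Ore-localization argument you are trying to bypass. So the ``direct construction'' is not self-contained as described.

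The second gap is in the injectivity argument: the degree $\ldeg(T)(a)=\mu(T,a)$ on inner edges cannot separate basis elements, since any parallel power of an edge --- in particular every monomial $[a_\beta^m]\,\beta^k$ supported in a holed monogon --- has $\mu(\,\cdot\,,e)=0$ for all $e\in\D$ and would therefore share the leading monomial $X^{0}$ with the empty tangle. A working lead-term map must also record the multiplicity with which $T$ runs parallel to each edge, and the assertion that the corrected degree separates elements of $\BB_{\SM}$ is the hard combinatorial core of Muller's argument, which you state without proof; the paper avoids it entirely, replacing it by the trivial observation that $\sX_+(\D)\to\cS$ carries the monomial basis $\BB_{\D,+}$ into the preferred basis $\BB_{\SM}$. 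Two smaller remarks: your worry about arcs winding $n$ times around an unmarked component $\beta$ is unfounded, since such an arc cannot be embedded (an embedded arc based at the marked point of the monogon, closed up at that point, is an embedded circle in an annulus and so has winding number $0$ or $\pm1$); and your argument for part (c), comparing the two anti-homomorphisms $\hchi\circ\vpD$ and $\vpD\circ\hchi$ on a weakly generating set, is correct and is essentially the paper's.
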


\def\tf{\tilde f}

\begin{proof} The proof is a modification of Muller's proof for the totally marked surface case of Muller. To further simplify the proof, we will use Muller's result for the totally marked surface case.

(a) We first prove a few lemmas.   
\begin{lemma} The ring $\cS$ is a domain.
\end{lemma}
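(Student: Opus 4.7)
The plan is to reduce to the totally marked case, which Muller has already handled. Specifically, I would enlarge $\cP$ to a set $\cP''$ by adding one extra marked point to each unmarked boundary component of $\pS$; then $\cP \subset \cP''$ and $(\Sigma, \cP'')$ has no unmarked boundary components.

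The first thing to check is that $(\Sigma, \cP'')$ remains quasitriangulable. Since $\cP \subset \cP''$, adding marked points cannot destroy quasitriangulability so long as the excluded cases (a disk with $\le 2$ marked points, or an annulus with one marked point) are avoided, and both are ruled out by a short case check: if $\Sigma$ is a disk, then $(\Sigma, \cP)$ being quasitriangulable forces $|\cP| \ge 3$, hence $|\cP''| \ge 3$; if $\Sigma$ is an annulus with at least one unmarked component, then $(\Sigma, \cP)$ being quasitriangulable forces the other component to carry at least $2$ marked points, so after adding a single marked point on each unmarked component one obtains $|\cP''| \ge 3$ distributed across both boundary components.

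Now $(\Sigma, \cP'')$ is totally marked and quasitriangulable, so the case $\cH = \emptyset$ of Theorem \ref{r.torus}, i.e.\ Muller's theorem from \cite{Muller}, yields an injective $R$-algebra homomorphism $\cS(\Sigma, \cP'') \embed \sX(\D'')$ into a quantum torus for some (any) quasitriangulation $\D''$ of $(\Sigma, \cP'')$. The quantum torus $\sX(\D'')$ is a two-sided Noetherian domain (Section \ref{s.qtorus}), so $\cS(\Sigma, \cP'')$ is a domain. Finally, Proposition \ref{r.func} applied to the inclusion $\cP \subset \cP''$ supplies an injective $R$-algebra map $\iota_*: \cS(\Sigma, \cP) \embed \cS(\Sigma, \cP'')$; since any subring of a domain is a domain, $\cS = \cS(\Sigma, \cP)$ is itself a domain.

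I expect the only delicate step to be the quasitriangulability verification for $(\Sigma, \cP'')$, which reduces to the brief case analysis sketched above. Everything else is a direct invocation of Muller's theorem combined with the functoriality statement of Proposition \ref{r.func}.
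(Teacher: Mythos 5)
Your proof is correct and follows essentially the same route as the paper: enlarge $\cP$ to a totally marked $(\Sigma,\cP'')$, invoke Muller's result to see that $\cS(\Sigma,\cP'')$ is a domain, and embed $\cS\SM$ into it via Proposition \ref{r.func}. The paper's version is terser (it does not spell out the quasitriangulability check, which you handle correctly), but the argument is the same.
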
 
\begin{proof} Let $\cP'\supset \cP$ be a larger set of marked points such that $(\Sigma,\cP')$ is totally marked. By Proposition \ref{r.func} $\cS\SM$ embeds into $\cS (\Sigma,\cP')$ which is a domain by Muller's result. Hence  $\cS$ is a domain.
\end{proof}

Relation \eqref{eq.35} shows that there is a unique $\Rb$-algebra homomorphism $\vDp: \sX_+(\D) \to \cS$ defined by $\vDp(X_a)=a$. Then for $\bn \in \BZ^\D$,
$$ \vDp(X^\bn)= \D^\bn := \left [  \prod_{a\in \D} a ^{\bn(a)} \right].$$

Note that $\vDp$ is injective since $\vDp$ maps the preferred $R$-basis $\BB_{\D,+}$ of $\sX_+(\D)$ bijectively onto a subset of the preferred $R$-basis $\BB_{\SM}$ of $\cS$. We will identify $\sX_+(\D)$ with its image under $\vDp$. Given a subset  $S\subset \D$, an {\em $S$-monomial} is an element in $\cS$ of the form $\D^\bn$, where $\bn \in \BN^\D$ has that $\bn(a)=0$ if $a\not \in S$.


Recall that $\D=\D_\inn \cup \D_\bd$, where $\D_\inn$ is the set of inner edges and $\D_\bd$ is the set of boundary edges.
\begin{lemma}  \lbl{r.monomial} 
Let $x\in \cS$.

(i) If $S\subset \D$ there is an $S$-monomial $\fm$ such that $\mu(a, x \fm)=0$ for all $a\in S$.

(ii) There is an $\D_\inn$-monomial $\fm$ such that $ x\fm \in \sX_+(\D)$.
\end{lemma}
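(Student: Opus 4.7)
The plan is to prove both parts via induction on intersection indices, using a stacking/skein-resolution argument for a single multiplication by an edge in $\D$.

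For part (i), I would induct on $\sigma_S(x) := \sum_{a \in S} \mu(a, x)$. The base case $\sigma_S(x) = 0$ is handled by $\fm = 1$. For the inductive step, pick any $a \in S$ with $\mu(a, x) > 0$. The key subclaim is: for every essential $\cP$-tangle $y \in \BB_{\SM}$ with $\mu(a,y) > 0$, when the product $ya$ is expanded in $\BB_{\SM}$, every supporting term $z$ satisfies $\mu(a,z) = 0$ and $\mu(b,z) \le \mu(b,y)$ for all $b \in \D \setminus \{a\}$. To prove this, I would use Lemma \ref{l.FHS} to put $y$, $a$, and all of $\D$ in a mutually taut position; then the projection of $ya$ (with $y$ stacked above $a$) has exactly $n := \mu(a,y)$ crossings, all between $y$-strands and $a$. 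Applying the skein relation at each crossing yields $2^n$ tangles $z_\varepsilon$ in $\Sigma$, each built from pieces of $y$ and $a$ reconnected by smoothings. Perturbing the $a$-pieces of $z_\varepsilon$ slightly off $a$ (the sign of displacement dictated by which $y$-half each $a$-piece glues to at a smoothed crossing) shows that $z_\varepsilon$ can be isotoped disjoint from $a$, giving $\mu(a, z_\varepsilon) = 0$; the bound $\mu(b, z_\varepsilon) \le \mu(b,y)$ for $b \neq a$ follows because $\mu(a,b) = 0$ for distinct $a,b \in \D$. Trivial knot reductions simply strip off null components without affecting these indices, and the trivial arc relation kills any term containing a null $\cP$-arc. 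Given the subclaim, $\sigma_S(xa) \le \sigma_S(x) - \mu(a,x) < \sigma_S(x)$, and the induction hypothesis applied to $xa$ yields an $S$-monomial $\fm_0$ such that $\mu(a', xa\fm_0) = 0$ for all $a' \in S$; then $\fm := [a\fm_0]$ works for $x$.

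For part (ii), I would apply (i) with $S = \D_\inn$ to produce an $\D_\inn$-monomial $\fm$ with $\mu(a, x\fm) = 0$ for every $a \in \D_\inn$. For each $z$ in the preferred-basis expansion of $x\fm$, Lemma \ref{l.FHS} realizes $z$ disjoint from the entire inner skeleton $\bigcup \D_\inn$, so every component of $z$ lies in a single connected component of $\Sigma \setminus \bigcup \D_\inn$, which is either an open triangle or an open holed monogon. A short topological analysis gives: in a triangle, essential $\cP$-arc components are isotopic to one of the three boundary edges (all in $\D$) and essential $\cP$-knots cannot exist (the triangle is a disk); in a holed monogon bounded by $a_\beta \in \D_\mon$ and $\beta \in \Hu$, essential $\cP$-arc components are isotopic to $a_\beta$, and essential $\cP$-knot components are parallel to $\beta$. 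Hence each component of $z$ is either an edge of $\D$ or a copy of some $\beta \in \Hu$, and the reordering relation identifies $z$ with a Weyl-normalized monomial in $\D$-edges with coefficients in $R[\Hu]$, i.e.\ $z \in \sX_+(\D)$. Therefore $x\fm \in \sX_+(\D)$.

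The main obstacle I anticipate is proving the subclaim cleanly. The topology of the perturbation of $z_\varepsilon$ off $a$ is local and intuitive, but must be organized so that the sign of each displacement is chosen consistently with the smoothing pattern and so that the bounds $\mu(a,z) = 0$, $\mu(b,z) \le \mu(b,y)$ genuinely survive the passage to the essential representative in $\BB_{\SM}$ via the trivial knot/arc relations. The rest of the argument is bookkeeping and the topological classification on triangles and holed monogons.
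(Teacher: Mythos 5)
Your part (ii) and the overall induction scheme for part (i) are essentially the paper's argument (the paper simply quotes Muller's intersection-number estimates, namely the subadditivity $\mu(a,yz)\le\mu(a,y)+\mu(a,z)$ and the identity $\mu(a,y\,a^{\mu(a,y)})=0$, and then takes $\fm=\D^\bn$ with $\bn(a)=\mu(a,x)$ for $a\in S$). The problem is your key subclaim, which is false as stated: a \emph{single} multiplication by $a$ does not in general kill all crossings with $a$. Suppose $\mu(a,y)=n\ge 2$ and put $y,a$ in taut position, so $a$ is cut by the crossings into $n+1$ subarcs. In a resolution term $z_\varepsilon$, each subarc of $a$ adjacent to an endpoint of $a$ is glued at its single smoothed crossing to a $y$-strand on one definite side of $a$ and can indeed be pushed off; but a \emph{middle} subarc of $a$ (between two consecutive crossings) is glued at its two ends to $y$-strands whose sides of $a$ are dictated by two independent smoothing choices. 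When those two sides disagree, the resulting component must pass from one side of $a$ to the other along that subarc, and (since the endpoints of $a$ are marked points, which the isotopy cannot cross) this generically forces $\mu(a,z_\varepsilon)\ge 1$. This is exactly why Muller's Corollary 4.13 multiplies by $a^{\mu(a,y)}$ rather than by $a$ once. So the ``consistent choice of displacement signs'' you flag as the delicate point is not merely delicate --- it cannot always be made.

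The good news is that your argument is repairable with no change to its architecture: the perturbation you describe does show that at most the $n-1$ middle subarcs can contribute a crossing, hence $\mu(a,z)\le\mu(a,y)-1$ for every supporting term $z$ of $ya$, and together with $\mu(b,z)\le\mu(b,y)$ for $b\in\D\setminus\{a\}$ this still makes $\sigma_S$ strictly decrease, so the induction closes (now terminating after $\sum_{a\in S}\mu(a,x)$ single multiplications rather than $|S|$ block multiplications). You would still need to check that the trivial loop and trivial arc reductions used to reach the preferred basis $\BB_{\SM}$ do not increase these indices, which you assert but is part of what Muller's Lemma 4.7(3) packages. Part (ii) --- cutting along $\D_\inn$, classifying essential components in triangles and holed monogons as edges of $\D$ or elements of $\Hu$, and concluding $x\fm\in\sX_+(\D)$ --- is correct and is the same as the paper's, which invokes the maximality of the quasitriangulation in place of your explicit classification.
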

\begin{proof} (i) The following two facts are respectively \cite[Lemma 4.7(3)]{Muller}  and \cite[Corollary 4.13]{Muller}:
\begin{align}
&\mu(a, yz) \le \mu(a,y) + \mu(a,z) \quad \text{for all} \ a\in \D,\ y,z\in \cS
\lbl{eq.mu1}\\
&\mu(a, y\,  a^{\mu(a,y)})=0  \quad \text{for all} \ a\in \D,\ y\in \cS. \lbl{eq.mu2}
\end{align}
These results are formulated and proved for general marked surfaces in \cite{Muller}, not just totally marked surfaces. Besides, since any two edges in $\D$ have intersection index 0,  we have
\be 
\mu(a, \fm) = 0 \quad \text{for all $\D$-monomials $\fm$ and  all} \ a\in \D. \lbl{eq.mu3}
\ee

Let $\bn\in \BZ^\D$ be given by $\bn(a)= \mu(a,x)$ for $a\in S$ and $\bn(a)= 0$ for $a\not\in S$. 
Then $\fm = \D^\bn$ is an $S$-monomial. Suppose $a\in S$.
By taking out the factors $a$ in $\fm$ and using~\eqref{e.normalizedtorus}, we have
$$ \fm = q^{k/2} a^{\mu(a,x)} \, \fm'$$
where $\fm'$ is another $S$-monomial and $k\in \BZ$. Using \eqref{eq.mu2} and then \eqref{eq.mu1} and \eqref{eq.mu3}, we have
$$ \mu(a, x \fm) \le \mu(a, x a^{\mu(a,x)}) + \mu (a, \fm') =0,$$
which proves $\mu(a, x \fm)=0$ for all $a\in S$.

(ii) Choose $\fm$ of part (a) with $S=\D_\inn$. Let $x_i \in \supp(x\fm)$. Clearly $\mu(a,x_i)=0$ for all $a \in \D_\bd$.
 Since $\mu(a,x\fm)=0$ for all $a\in \D_\inn$, 
 one can find a $\cP$-tangle $x_i'$ which is $\cP$-isotopic to $x_i$ such that $x_i$ and $a$ are taut (see Lemma \ref{l.FHS}), so that $x_i'\cap a=\emptyset$ in $\Sigma \setminus \cP$ for each $a\in \D$. The maximality in the definition of quasitriangulation shows that each component of $x_i'$ is $\cP$-isotopic to one in $\tD$. It follows that $x_i=\Hu^\bk \D^\bn$ in $\cS$ for some $\bk\in \BN^\Hu$ and $\bn \in \BN^\D$. This implies $x\fm \in \sX_+(\D)$.
\end{proof}

\begin{lemma}\lbl{l.oresubset} The multiplicative set
$\fM$ generated by  $\D$-monomials is a 2-sided Ore subset of $\cS$. 
Similarly the multiplicative set
$\fM_\inn$ generated by  $\D_\inn$-monomials is a 2-sided Ore subset of $\cS$.
\end{lemma}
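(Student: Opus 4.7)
The plan is to verify the two-sided Ore condition directly, exploiting the fact that $\D$-monomials $q$-commute with every element of $\sX_+(\D)$. Regularity of elements in $\fM$ and $\fM_\inn$ is automatic: the preceding lemma establishes that $\cS$ is a domain, and both $\fM, \fM_\inn \subset \cS\setminus\{0\}$.

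The central observation is that conjugation by a $\D$-monomial preserves $\sX_+(\D)$. Indeed, if $\fm$ is a $\D$-monomial and $f = \sum_i c_i \fn_i \in \sX_+(\D)$ is an $\Rb$-linear combination of $\D$-monomials $\fn_i$, then each $\fn_i$ $q$-commutes with $\fm$, say $\fm \fn_i = q^{k_i} \fn_i \fm$, so $\fm f = g \fm$ and $f \fm = \fm h$ for $g = \sum_i c_i q^{k_i}\fn_i$ and $h = \sum_i c_i q^{-k_i}\fn_i$, both in $\sX_+(\D)$.

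To verify the right Ore condition for $\fM$, given $x \in \cS$ and $\fm \in \fM$, apply Lemma \ref{r.monomial}(ii) to obtain a $\D_\inn$-monomial $\fn$ with $f := x\fn \in \sX_+(\D)$. Set $\fm' := \fn\fm$, which belongs to $\fM$ because $\fM$ is closed under multiplication (the product of two $\D$-monomials is a unit of $R$ times another $\D$-monomial). Then
$$
x\fm' = x\fn \fm = f\fm = \fm h
$$
for some $h \in \sX_+(\D) \subset \cS$ by the central observation, establishing right Ore. Left Ore is symmetric: it uses the left-handed analogue of Lemma \ref{r.monomial}(ii) asserting the existence of a $\D_\inn$-monomial $\fn$ with $\fn x \in \sX_+(\D)$. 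This either follows by the same geometric argument (the intersection estimates \eqref{eq.mu1}--\eqref{eq.mu2} are visibly left-right symmetric) or by applying the reflection anti-involution $\hchi$, which fixes each element of $\fM$ (every $\D$-monomial is reflection invariant) and preserves $\sX_+(\D)$ setwise.

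The same proof works verbatim for $\fM_\inn$: when $\fn, \fm$ are both $\D_\inn$-monomials, their product $\fn\fm$ is a unit of $R$ times a $\D_\inn$-monomial, hence lies in $\fM_\inn$. The main (minor) obstacle is bookkeeping — confirming that $\fM_\inn$ is truly closed under multiplication modulo units, and that Lemma \ref{r.monomial}(ii) delivers a $\D_\inn$-monomial rather than an arbitrary $\D$-monomial, which is exactly what makes the $\fM_\inn$ case go through without modification.
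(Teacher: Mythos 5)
Your proof is correct and follows essentially the same route as the paper: push $x$ into $\sX_+(\D)$ via Lemma \ref{r.monomial}(ii), use the fact that $\D$-monomials $q$-commute with the monomial basis of $\sX_+(\D)$ to verify the right Ore condition, and deduce the left Ore condition from the reflection anti-involution $\hchi$, which fixes $\D$-monomials and reverses products. Your explicit remark that Lemma \ref{r.monomial}(ii) produces a $\D_\inn$-monomial (so that the same argument covers $\fM_\inn$) is exactly the point the paper relies on when it says the two cases are identical.
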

\begin{proof}

 By  definition, $\fM$ is right Ore if
for every  $x\in \cS$ and every $ u \in \fM$, one has $x \fM \cap u\cS \neq \emptyset$.

 By \eqref{e.normalizedtorus}, one has $u=q^{k/2} \D^\bn$ for some $k\in \BZ$, $\bn\in \BN^\D$. 
By Lemma \ref{r.monomial}, there is a $\D$-monomial $\fm$ such that $x\fm \in \sX_+(\D)$. Since $\BB_{\D,+}=\{ \Hu^\bk \D^\bn \mid \bk \in \BN^\Hu, \bn \in \BN^\D \}$ is the preferred $R$-basis of $\sX_+(\D)$, we have a finite sum presentation $x \fm= \sum_{i\in I} c_i  \Hu^{\bk_i} \D^{\bn_i}$ with $ c_i \in R$. It follows that
\begin{align*}
 x \fM \ni x \fm u &= q^{k/2} \sum_{i\in I} c_i \Hu^{\bk_i} \D^{\bn_i} \D^\bn= q^{k/2} \sum_{i\in I} c_i q^{\la \bn_i, \bn\ra_P}\Hu^{\bk_i} \D^\bn \D^{\bn_i}   \\
 &= q^{k/2}  \D^\bn \sum_{i\in I} c_i q^{\la \bn_i, \bn\ra_P}\Hu^{\bk_i} \D^{\bn_i} = u \sum_{i\in I} c_i q^{\la \bn_i, \bn\ra_P}\Hu^{\bk_i} \D^{\bn_i} \in u \cS,
\end{align*}
where the second equality follows from  \eqref{e.normalizedtorus}. This proves $\fM$ is right Ore. Since the reflection anti-involution $\hchi$ reverses the order of the multiplication and fixes each $\D$-monomial, $\fM$ is also left Ore. The proof that $\fM_\inn$ is Ore is identical, replacing $\fM$ by $\fM_\inn$ everywhere.
\end{proof}
Let us prove Theorem \ref{r.torus}(a).  Since $\cS$ is a domain, the natural map $\cS \to \cS\fM^{-1}$, where $\cS\fM^{-1}$ is the Ore localization of $\cS$ at $\fM$, is injective.  Since Ore localization is flat, the inclusion $\vDp :\sX_+(\D) \embed \cS$ induces  an inclusion
 \be
 \lbl{eq.iso1}
  \vDpt:\sX_+(\D) \fM^{-1} \embed  \cS\fM^{-1}.
 \ee
 Note that $\sX_+(\D) \fM^{-1}= \sX(\D)$.
Let us prove $\vDpt$ is surjective.  After identifying $\sX_+(\D) \fM^{-1}$ as a subset of $\cS\fM^{-1}$ via $\vDpt$, it is enough to show that $\cS\subset \sX_+(\D) \fM^{-1}$.  This is guaranteed by Lemma \ref{r.monomial}, thus $\vDpt$ is an isomorphism.

 Let $\vD$ be the restriction of $(\vDpt)^{-1}$ onto $\cS$, then we have an embedding of $R[\Hu]$-algebras $\vD : \cS \embed \sX(\D)$ such that $\vD\circ \vDp$ is the identity on $\sX_+(\D)$. Any element  $x\in\cS$ can be presented as $y \fm^{-1}$ with $y \in \sX_+(\D), \fm \in \fM$. This shows $\sX_+(\D)$ weakly generates $\cS$, and thus the uniqueness of $\vpD$ is clear.
 This proves (a).
 
 (b) Inclusion \eqref{eq.incl2} follows from \eqref{eq.vpD}, and part (b) follows from Corollary \ref{r.sandwich}.

(c)  Let us prove that $\vD$ is reflection invariant, i.e. for every $x \in \cS$ , we have
  \be
  \lbl{eq.4d}
  \vD(\hchi(x))= \hchi(\vD(x)).
  \ee
  Identity \eqref{eq.4d} clearly holds for the case when $x \in \tD$. Hence it holds for $x$ in the $R$-algebra generated by $\tD$, which is $\sX_+(\D) \subset \cS$. Since every element $x \in \cS$ can be presented in the form $y \fm^{-1}$, where $y \in \sX_+(\D), \fm \in \fM$, we also have \eqref{eq.4d} for $x$. This completes the proof of Theorem~\ref{r.torus}. \end{proof}

\def\ttS{\tilde{\mathcal S}}
\def\La{{\Lambda}}
\def\cD{\mathcal D}
\def\cC{\mathcal C}
\begin{remark}\lbl{rem.Deltain} Lemma \ref{r.monomial} shows that $\vpD(\cS)$ lies in $\sX_+(\D) (\fM_\inn) ^{-1}$.
\end{remark}

\subsection{Flip and transfer} For a quasitriangulation $\D$ of $\SM$, 
 the map $\vD: \cS\embed \sX(\D)$ will be called the {\em skein coordinate map}. We wish to understand how these coordinates change under change of quasitriangulation.
 
Let us first introduce the notion of a flip of a $\cP$-quasitriangulation.
\FIGc{mutation}{Flip $a \to a^*$. Case 1}{3cm}
\FIGc{quasiflip}{Flip $a \to a^*$. Case 2}{2.5cm}

Suppose $\D$ is a quasitriangulation of $\SM$ and $a$ is an inner edge in $\D$. The {\em flip of $\D$ at $a$} is a new quasitriangulation given by $\D' := \D \setminus \{a\} \cup \{ a^*\}$, where $a^*$ is the only $\cP$-arc not $\cP$-isotopic to $a$ such that $\D'$ is a quasitriangulation.
There are two cases:
\begin{itemize}
\item[\em Case 1.] $a$ is the common edge of two distinct triangles, see Figure \ref{fig:mutation}.
\item[\em Case 2.] $a$ is the common edge of a holed monogon and a triangle, see Figure \ref{fig:quasiflip}.
\end{itemize}
 In both cases 
 $a^*$ is depicted in Figures \ref{fig:mutation} and \ref{fig:quasiflip}.

Any two $\cP$-quasitriangulations are related by a sequence of flips, see e.g. \cite{Penner}, where a flip of case 2 is called a quasi-flip. If $\SM$ is a totally marked surface, then there is no flip of case 2. 

 Suppose $\D, \D'$ are two quasitriangulations of $(\Sigma, \cP)$. Let
 $$\Theta_{\D,\D'}:= \tilde{\vp}_{\D'} \circ (\tvpD)^{-1}:  \tiX(\D)  \to \tiX(\D').$$
  By Theorem \ref{r.torus}, $\Theta_{\D,\D'}$ is an $\Rb$-algebra isomorphism from $ \tiX(\D)$  onto $\tiX(\D')$. We call $\Theta_{\D,\D'}$ the {\em transfer isomorphism from $\D$ to $\D'$}.

\begin{proposition} \lbl{r.42}

(a) The transfer isomorphism $\Theta_{\D,\D'}$ is natural. This means
 $$\Theta_{\D, \D}= \Id, \quad \Theta_{\D, \D''} = \Theta_{\D', \D''} \circ \Theta_{\D, \D'}.$$

 (b) The skein coordinate maps $\vp_\D: \cS \embed \tiX(\D)$ commute with the transfer isomorphisms, i.e.
$$ \vp_{\D'} = \Theta_{\D,\D'} \circ \vp_{\D}.$$

 (c) Suppose $\D'$ is obtained from $\D$ by a flip at an edge  $a$, with $a$ replaced by $a^*$ as in  Figure \ref{fig:mutation} (Case 1) or Figure \ref{fig:quasiflip} (Case 2).
Identify $\cS$ as a subset of $\XD$ and $\tiX(\D')$. Then, with notations of edges as in Figure \ref{fig:mutation}  or Figure \ref{fig:quasiflip}, we have 
\begin{align}
 \lbl{eq.aa0} \Theta_{\Delta\Delta'}(u)&= u \quad \text{ for} \ u\in \D \setminus \{a\},\\
\lbl{eq.aa1}
 \Theta_{\Delta\Delta'}(a) &=
 \begin{cases}
 [ce(a^*)^{-1}] + [ bd(a^*)^{-1}] \quad &\text{in Case 1}\\
 [b^2 (a^*)^{-1}] + [c^2 (a^*)^{-1}] + \beta [bc(a^*)^{-1}] &\text{in Case 2}.
 \end{cases}
\end{align}

\end{proposition}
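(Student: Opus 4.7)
My plan is to handle (a) and (b) by unfolding the definitions, and to prove the flip formulas in (c) by direct computation with the Kauffman skein relation applied to the crossings of $a$ with $a^*$.

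For (a), I would unfold the definition $\Theta_{\D,\D'} = \widetilde{\varphi}_{\D'} \circ \widetilde{\varphi}_\D^{-1}$ to get $\Theta_{\D,\D} = \Id$ and
\[
\Theta_{\D',\D''} \circ \Theta_{\D,\D'} = \widetilde{\varphi}_{\D''}\circ \widetilde{\varphi}_{\D'}^{-1}\circ \widetilde{\varphi}_{\D'}\circ \widetilde{\varphi}_{\D}^{-1} = \Theta_{\D,\D''}.
\]
For (b), I would use that $\widetilde{\varphi}_{\D}$ extends $\vpD$ by Theorem \ref{r.torus} to obtain
\[
\Theta_{\D,\D'} \circ \vpD = \widetilde{\varphi}_{\D'}\circ \widetilde{\varphi}_{\D}^{-1}\circ \vpD = \widetilde{\varphi}_{\D'}|_{\cS} = \vpDp.
\]
Equation \eqref{eq.aa0} is then immediate: for $u \in \D \cap \D' = \D\setminus\{a\}$, both $\vpD(u)$ and $\vpDp(u)$ equal the $\cP$-arc $u \in \cS$, which under the identifications becomes the variable $X_u$ in each quantum torus.

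The substantive content is \eqref{eq.aa1}. Since $a^*\in \D'$ is invertible in $\tiX(\D')$, I would first compute the product $a\cdot a^*$ in $\cS$ and then right-multiply by $(a^*)^{-1}$. In Case 1 (Figure \ref{fig:mutation}), the arcs $a$ and $a^*$ cross transversely at a single point inside the quadrilateral with sides $b,c,d,e$. Placing $a$ slightly above $a^*$ in $\Sigma\times(-1,1)$ turns $aa^*$ into an $\cN$-tangle with one crossing; applying the Kauffman skein relation resolves this crossing into two smoothings that produce, respectively, the pair of $\cP$-arcs $\{b,d\}$ and $\{c,e\}$ (the two ways of matching the half-edges of $a\cup a^*$ at the corners of the quadrilateral). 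I would then use the reordering relation of Figure \ref{fig:boundary} to convert these products to Weyl-normalized form in $\cS$ and right-multiply by $(a^*)^{-1}$, obtaining $a = [bd(a^*)^{-1}] + [ce(a^*)^{-1}]$, with the stray $q^{\pm 1}$ from the skein relation absorbed into the normalizations.

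For Case 2 (Figure \ref{fig:quasiflip}), the arcs $a$ and $a^*$ intersect at two points, so applying the Kauffman skein relation at both crossings produces four resolutions. Two of them yield a pair of parallel copies of $b$ (respectively $c$), contributing after normalization the terms $[b^2(a^*)^{-1}]$ and $[c^2(a^*)^{-1}]$. Each of the other two ``mixed'' resolutions produces a $\cP$-arc isotopic to $b$, a $\cP$-arc isotopic to $c$, and a closed loop in $\Sigma$ parallel to the unmarked boundary $\beta$; since this loop equals $\beta \in \cS$, the mixed resolutions combine to give the central term $\beta[bc(a^*)^{-1}]$. The main obstacle in both cases will be the careful bookkeeping of $q$-powers: the contributions from the skein relation at each crossing, from the reordering relation at each marked point, and from passage to the Weyl normalization must cancel to produce exactly the claimed normalized expression, which I expect to follow from a direct computation using \eqref{eq.35} and \eqref{e.normalizedtorus}.
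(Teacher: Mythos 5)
Your handling of (a), (b), and \eqref{eq.aa0} matches the paper exactly, and your overall strategy for \eqref{eq.aa1} is also the same as the paper's: compute $aa^*$ in $\cS$ via the Kauffman skein relation applied to the crossing(s) of $a$ with $a^*$, then right-multiply by $(a^*)^{-1}$ and match $q$-powers against the Weyl normalization. (The paper cites \cite{Le3} for Case 1 and for Case 2 simply quotes the identity $aa^* = q^2 b^2 + q^{-2}c^2 + \beta bc$ and then does the algebra; you propose to derive that identity, which is what the cited sources do.)

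However, your description of the Case~2 skein resolution contains an error. With $a$ stacked above $a^*$ at both crossings, the four resolutions carry coefficients $q^2$, $1$, $1$, $q^{-2}$. You claim that \emph{each} of the two mixed resolutions produces a copy of $b$, a copy of $c$, and a loop parallel to $\beta$; if that were so, the mixed terms would contribute $2\beta bc$, and the target identity $aa^* = q^2 b^2 + q^{-2}c^2 + \beta bc$ (coefficient $1$ on $\beta bc$) would fail. In fact only one of the two mixed resolutions yields $bc\beta$. The other mixed resolution produces a $\cP$-arc that, together with a subarc of $\cN$, bounds a disk in $M = \Sigma\times(-1,1)$; that term vanishes by the trivial arc relation (this is precisely why the marked-point/trivial-arc structure is needed here). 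Once you make this correction the bookkeeping closes and your computation agrees with the paper's; but as written the Case~2 argument would prove the wrong identity.
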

\begin{proof}
Parts (a) and (b) follow right away from the definition. Identity \eqref{eq.aa0} is obvious from the definition. 
Case 1 of \eqref{eq.aa1} is proven in  \cite[Proposition 5.4]{Le3}.

For case 2 of \eqref{eq.aa1}, we have that $\Theta_{\Delta,\Delta'}(a) = \tilde{\vp}_{\D'}(a)$. To compute this, we note that in $\cS$, $aa^*=q^2b^2+q^{-2}c^2+\beta bc$. Then

\begin{align*}
\tilde{\vp}_{\D'}(aa^*) & = \tilde{\vp}_{\D'}(q^2b^2) + \tilde{\vp}_{\D'}(q^{-2}c^2) + \tilde{\vp}_{\D'}(\beta bc), \\
\tilde{\vp}_{\D'}(a)\tilde{\vp}_{\D'}(a^*) &= q^2b^2+q^{-2}c^2+\beta bc, \\
\tilde{\vp}_{\D'}(a)a^* &=  q^2b^2+q^{-2}c^2+\beta bc
\end{align*}
Multiply both sides on the right by $(a^*)^{-1}$ and note that the $q$ factors agree with Weyl normalization. Therefore,

$$
\tilde{\vp}_{\D'}(a) = q^2b^2(a^*)^{-1} + q^{-2}c^2(a^*)^{-1} + \beta bc(a^*)^{-1} = [b^2(a^*)^{-1}] + [c^2(a^*)^{-1}] + \beta [bc(a^*)^{-1}].
$$
\end{proof}

\def\rdpp{\rho_{\D',\D''}}
\def\mon{\text{mon}}
\section{Modifying marked surfaces} \lbl{sec.surgery}

 In this section, a quasitriangulable marked surface $\SM$ is fixed and we write $\cS:=\cS\SM$.
The inclusion of unmarked boundary components in the theory allows us to describe how the skein coordinates change under modifications of surfaces. In this section we
 consider two modifications: adding a marked point  and plugging an unmarked boundary component with a disk. The results of this section will be used in the proof of the main theorem, particularly for Proposition \ref{r.knot}.

\def\TDDs{\theta_{\D,\D}^{\text{sur}}}
\def\Ts{\theta^{\text{sur}}}
\def\BDs{B_\D^{\text{sur}}}
\def\BDps{B_{\D,+}^{\text{sur}}}
\subsection{Surgery algebra}\lbl{sec.suralg}Let $\D$ be a quasitriangulation of $\SM$. 
Identify $\cS$ as a subset of $\XD$ using the skein coordinate map $\vD$. 
Recall that $\D_\mon$ is the set of all monogon edges. Let $\Dess:= \D \setminus \D_\mon$.

Suppose $\beta\in \Hu$ is an unmarked component whose monogon edge is $a_\beta$. Let $\Sigma'$ be the result of gluing a disk to $\Sigma$ along $\beta$. We will say that $(\Sigma', \cP)$ is obtained from $\SM$ by {\em plugging 
 the unmarked $\beta$}.   Then $a_\beta$ becomes 0 in $\cS(\Sigma', \cP)$, while it is invertible in $\XD$. For this reason we want to find a subalgebra $\ZD$ of $\XD$ in which $a_\beta$ is not invertible, but we still have $\cS \subset \ZD$.

For $a\in \D_\mon$ choose a $\cP$-arc $a^*$ such that $\D\setminus \{a\} \cup \{a^*\}$ is a new quasitriangulation, i.e. it is the result of the flip of $\D$ at $a$. Let $\D_\mon^*:= \{ a^* \mid a \in \D_\mon\}$. 
For $a\in \D_\mon$ let $(a^*)^*=a$.

The {\em surgery algebra}  $\CZ(\D)$ is the $R[\Hu]$-subalgebra of $\sX(\D)$ generated by $a^{\pm 1}$ with $a\in \Dess$, and  all $a\in  \D_\mon \cup \D_\mon^*$. Thus in $\ZD$ we don't have $a^{-1}$ (for  $a\in \D_\mon$) but we do have $a^*$, which will suffice in many applications. With the intention to replace $a^{-1}$ by $a^*$, we introduce the following definition: for $a\in \D$ and $k\in \BZ$ define
$$ a^{\{k\}} = \begin{cases} (a^*)^{-k} \qquad & \text{if  $a\in \D_\mon$ and $k <0$ }\\
a^k & \text{in all other cases.}\end{cases}
$$
For $\bk\in \BZ^\D$ define
$$ \D^{\{\bk\}} := \left [ \prod_{a\in \D} a^{\{\bk(a)\}}   \right].$$

\begin{proposition}\lbl{p.ztox}

(a)  As an $R[\Hu]$-algebra, $\ZD$ is 
generated by $\D\cup \D_\mon^*$ and $a^{-1}$ for $a\in
\Dess$,  
subject to the following relations: 
 \begin{align}
xy &= q^{P(x,y)}yx \quad &\text{if}& \  (x,y) \neq (a, a^*) \ \text{for all } \ a\in (\D_\mon \cup \D_\mon^*)   \lbl{eq.PP} \\
a a^*& = q^2b^2+q^{-2}c^2+\beta bc, &\text{if}& \ a\in (\D_\mon \cup \D_\mon^*).
 \lbl{e.zrelations}
 \end{align}
Here, for  the case where $a \in \D_\mon \cup \D_\mon^*$, we denote by $\beta$  the unmarked boundary component surrounded by $a$, and the edges $b,c$  are  as in Figure \ref{fig:quasiflip}.

(b) The skein algebra $\cS$ is a subset of $\ZD$ for any quasitriangulation $\D$.

(c) As an $R[\Hu]$-module, $\ZD$ is free with basis
$$\BDs:= \{ \D^{\{\bk\}} \mid \bk \in \BZ^\D\}.$$
\end{proposition}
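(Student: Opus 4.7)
\emph{Strategy.} My plan is to establish (a) and (c) in tandem by a PBW-style argument, and then deduce (b) from a geometric refinement of Lemma~\ref{r.monomial}. First I would check that the generators of $\ZD$ satisfy the stated relations: the $q$-commutations~\eqref{eq.PP} for monomial pairs are inherited from the quantum-torus structure of $\XD$, while the pairs involving $a^* \in \D_\mon^*$ (outside the excluded case $(a, a^*)$) require a short direct computation using the explicit expression for $a^*$ obtained below. For the monogon relation~\eqref{e.zrelations}, I would set $\D' := (\D \setminus \{a\}) \cup \{a^*\}$ and apply Proposition~\ref{r.42}(c) Case~2 with the roles of $\D$ and $\D'$ swapped; this yields $\vpD(a^*) = a^{-1}(q^2 b^2 + q^{-2} c^2 + \beta bc)$ in $\XD$, and left multiplication by $a$ gives $a a^* = q^2 b^2 + q^{-2} c^2 + \beta bc$. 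Hence there is a canonical $R[\Hu]$-algebra surjection $\pi \colon A \twoheadrightarrow \ZD$ from the abstract algebra $A$ with the presentation of~(a).

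\emph{Step for (c).} Spanning of $\BDs$ is a rewriting argument: any word in the generators of $\ZD$ is brought into Weyl-normalized form via~\eqref{eq.PP}, and any remaining subword $a^m (a^*)^n$ with $a \in \D_\mon$ is reduced by iterating~\eqref{e.zrelations}, each application cancelling one $a$-$a^*$ pair at the cost of a polynomial in the adjacent edges $b, c$ and $\beta$. The output is an $R[\Hu]$-linear combination of elements $\D^{\{\bk\}}$. For linear independence of $\BDs$ inside $\XD$, I would use a leading-monomial argument with respect to a lexicographic order on the $R[\Hu]$-basis $\{X^\bn\}$ of $\XD$: the expansion of $(a^*)^n$ has a well-defined top monomial in this order (all its monomials share $a$-degree $-n$, differing only in adjacent-edge degrees), so each $\D^{\{\bk\}}$ has a distinct leading monomial. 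Triangularity then forces $R[\Hu]$-linear independence of $\BDs$ in $\XD$, proving (c); since $\pi$ sends the spanning set $\BDs \subset A$ bijectively onto $\BDs \subset \ZD$, it is an isomorphism, completing~(a).

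\emph{Step for (b).} Given $x \in \cS$, Lemma~\ref{r.monomial}(i) applied with $S = \Dess$ produces a $\Dess$-monomial $\fm$ such that $\mu(a, x\fm) = 0$ for every $a \in \Dess$. By Lemma~\ref{l.FHS} I may pick a representative of $x\fm$ whose components lie entirely within the connected regions of $\Sigma \setminus \bigcup_{a \in \Dess} a$. Each such region is either a triangle of $\D$ or a triangle glued to an adjacent holed monogon around some $\beta \in \Hu$ with monogon edge $a \in \D_\mon$. A case analysis of $\cP$-isotopy classes of components inside these regions shows that every such component is expressible via (powers and Weyl-normalized products of) elements of $\D$, elements of $\D_\mon^*$, and $\beta$---never requiring $a^{-1}$ for a monogon edge. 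Hence $x \fm \in \ZD$, and since $\fm$ is a product of essential edges, it is invertible in $\ZD$, so $x = (x\fm)\fm^{-1} \in \ZD$.

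\emph{Main obstacle.} The hardest step is the geometric enumeration in the proof of (b): one must verify that no $\cP$-isotopy class of a component confined to a triangle-plus-monogon region forces the inverse of a monogon edge, showing in particular that loops and arcs winding around $\beta$ can always be written in terms of $a$, $a^*$, and $\beta$. Once this is in hand, Steps~1--2 are routine PBW manipulations built on the explicit flip formula of Proposition~\ref{r.42}(c) and the grading/triangularity structure of $\XD$.
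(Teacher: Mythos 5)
Your proposal is correct, and its overall skeleton matches the paper's: both realize $\ZD$ as a quotient of the abstractly presented algebra, prove that quotient map is an isomorphism by exhibiting $\BDs$ as a spanning set of the presented algebra whose image is linearly independent, and obtain (b) by combining Lemma \ref{r.monomial} with $S=\Dess$, Lemma \ref{l.FHS}, and an enumeration of the $\cP$-isotopy classes of components confined to a triangle or to an ``eye'' (a bigon with a disk removed); this enumeration is exactly the paper's Lemma \ref{r.sinczp}, and you correctly identify it as the geometric crux. The one place where you genuinely diverge is the linear-independence step. The paper first isolates the subalgebra $\CZ_+(\D)$ generated by $\D\cup\D_\mon^*$, shows that its spanning set $\BDps$ maps injectively onto a subset of the preferred geometric basis $B_{\SM}$ of $\cS$ (so independence is read off from the topology of the tangles), and then gets $\ZD=\CZ_+(\D)\fN^{-1}$ by Ore localization at the $\Dess$-monomials, deducing (c) by clearing denominators. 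You instead prove independence of all of $\BDs$ directly inside $\XD$ by a leading-monomial/triangularity argument; this works because every monomial occurring in the expansion of $(a^*)^n$ has $a$-exponent exactly $-n$ (so $\bk(a)$ for $a\in\D_\mon$ is read off from any term of $\D^{\{\bk\}}$), and among elements with the same monogon exponents the lexicographically maximal term, whose coefficient is a unit times a nonzerodivisor of $R[\Hu]$, determines the remaining exponents. The paper's route is shorter because Muller's basis $B_{\SM}$ is already in hand and the injectivity of $\CZ_+(\D)\to\cS$ is reused elsewhere; your route is purely algebraic within the quantum torus and would survive in settings where no geometric basis is available. One small caution: in your rewriting step make sure the order of the product in \eqref{e.zrelations} is the one coming from the skein computation ($a$ stacked over $a^*$), since the two orders differ by powers of $q$ term by term.
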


It should be noted that $P(x,y)$  in \eqref{eq.PP} is well-defined  since $x,y$ do not intersect in $\Sigma \setminus \cP$.

\def\fN{\mathfrak{N}}

\begin{proof} (a) Let us redefine $\ZD$ so that it has the presentation given 
 in the proposition. Recall that $\cS\subset \XD$, hence $a^*\in \XD$. Define an  $R[\Hu]$-algebra homomorphism $\theta: \ZD \to \XD$ by $\theta(a)= a$ for all $a\in \D \cup \D_\mon^*$. Since all the defining relations of $\ZD$ are satisfied in $\XD$, the homomorphism $\theta$ is well-defined. To prove (a) we need   to show that $\theta$ is injective. 

 Let $\CZ_+(\D) \subset \CZ(\D)$ be the $R[\Hu]$-algebra generated by all $a\in \D \cup \D_\mon^*$.

\begin{lemma}\lbl{r.sinczp}
Let $T \subset \cS$ be an essential $\cP$-tangle such that $\mu(T,b)=0$ for all $b \in \Dess$. Then $T \in \CZ_+(\D)$.
\end{lemma}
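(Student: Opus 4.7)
The plan is to isotope $T$ off $\Dess$ and then classify its components as elements of $\D\cup\D_\mon^*\cup\Hu$. Applying Lemma~\ref{l.FHS} to the family $\{T\}\cup\Dess$, after a $\cP$-isotopy we may assume $T\cap b\subset\cP$ for every $b\in\Dess$, so each component of $T$ is a $\cP$-arc or $\cP$-knot in $\Sigma\setminus\Dess$.

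Next, I would classify each essential component. A $\cP$-knot component is $\cP$-isotopic to some $\beta\in\Hu$, because the only essential simple closed curves in $\Sigma\setminus\Dess$ wrap around a holed monogon. For a $\cP$-arc component $c$, I would induct on $\#(c\cap\D_\mon)$ (counted in $\Sigma\setminus\cP$): if $c$ is also disjoint from $\D_\mon$ then $c$ avoids all of $\D$, so maximality of the quasitriangulation (as in the proof of Lemma~\ref{r.monomial}(ii)) forces $c$ to be $\cP$-isotopic to an edge of $\D$; otherwise $c$ enters some holed monogon through $a_\beta\in\D_\mon$, and the sub-arc of $c$ inside this annular holed monogon is either a bubble bounding a disk together with a subarc of $a_\beta$, which can be pushed out of the monogon to strictly decrease $\#(c\cap\D_\mon)$, or an arc winding once around $\beta$. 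An embedded arc in an annulus with a single basepoint on one boundary can wind at most once around the other boundary (a standard universal-cover argument), so these are the only cases; in the winding case $c$ is $\cP$-isotopic to $a_\beta$ or to $a_\beta^*$.

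Once every component of $T$ is identified with an element of $\D\cup\D_\mon^*\cup\Hu$, the reordering relation (see Subsection~\ref{sec.involution}) expresses $T$ in $\cS$ as the Weyl-normalized product of its components. Each such factor is by definition a generator of $\CZ_+(\D)$, so the product lies in $\CZ_+(\D)$, as claimed. The main obstacle will be the inductive step for $\cP$-arc components that cross $\D_\mon$ many times: one must carefully rule out that several bubbles inside the same holed monogon can interact to produce an arc not $\cP$-isotopic to anything in $\D\cup\D_\mon^*$, and verify that the universal-cover argument applies uniformly to each individual bubble after the trivial bubbles have been pushed out.
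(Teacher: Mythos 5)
Your overall route coincides with the paper's: apply Lemma \ref{l.FHS} to put $T$ disjoint from $\Dess$, identify each essential component with an element of $\D\cup\D_\mon^*\cup\Hu$, and conclude via the reordering relation and Weyl normalization. Your treatment of knot components, and of arc components disjoint from all of $\D$ (maximality of the quasitriangulation, as in Lemma \ref{r.monomial}(ii)), is correct and is exactly what the paper does.

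The difference, and the gap, is in the arcs meeting $\D_\mon$. The paper sidesteps your induction entirely: cutting $\Sigma$ along $\Dess$ yields ideal triangles and \emph{eyes} (once-holed bigons, Figure \ref{fig:eye}), and an eye carries exactly four isotopy classes of essential $\cP$-arcs --- its two corner-to-corner boundary edges, the monogon edge $a_\beta$, and the flipped edge $a_\beta^*$ --- plus the core curve isotopic to $\beta$; a component of $T$ confined to an eye must therefore already be on the list. Your inductive step, which you rightly flag as the main obstacle, does not yet establish this. First, the dichotomy ``bubble versus winding once'' is not the right one: \emph{every} properly embedded sub-arc of a component inside the holed monogon, having both endpoints on $a_\beta$, cuts off a disk together with one of the two complementary sub-arcs of $a_\beta$; it is pushable by a $\cP$-isotopy exactly when that sub-arc of $a_\beta$ avoids the marked point $p$, the marked point being the only obstruction to the push. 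Second, after all pushable sub-arcs are removed you must still show that a component with a remaining unpushable sub-arc is $\cP$-isotopic to $a_\beta^*$ (it cannot be $a_\beta$, which is realizable disjointly from $a_\beta$) and must exclude configurations with several nested unpushable sub-arcs; both points reduce to the finite classification of essential arcs in the eye (your covering-space argument shows an arc between the two corners cannot wind and hence is a boundary edge, and the unique essential embedded loop at either corner is $a_\beta$, resp.\ $a_\beta^*$). So the induction can be closed, but what closes it is precisely the direct classification the paper invokes, which makes the induction superfluous.
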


\begin{proof} After a $\cP$-isotopy we can assume $T$ does not intersect any $b\in \Dess$ in $\Sigma \setminus \cP$ by Lemma \ref{l.FHS}.
Cutting $\Sigma$ along $\Dess$, one gets a collection of ideal triangles and {\em eyes}. Here  an {\em eye} is a bigon with a small open disk removed, see Figure \ref{fig:eye}. Each eye has  two  quasitriangulations. 
If $x$ is a component of $T$, then $x$, lying inside of a triangle or an eye, must be $\cP$-isotopic to an element in $\D \cup \D_\mon^* \cup \Hu$, which implies $x \in \CZ_+(\D)$. Hence $T \in \CZ_+(\D)$.
\end{proof}
 
\FIGc{eye}{An {\em eye} (left), and its two quasitriangulations.}{2cm}

\begin{lemma}\lbl{l.surgerybasis} 
(i) The  set 
$$
\BDps := \{ \D^{\{\bk\}} \mid \bk \in \BN^{\Dess} \times \BZ^{\D_{\mon}}\}.
$$
is an $R[\Hu]$-basis for $\CZ_+(\D)$.
The map $\theta$ maps $\CZ_+(\D)$ injectively into $\cS$.

(ii)  The multiplicative set $\fN$ generated by all $\Dess$-monomials is a two-sided Ore set of $\CZ_+(\D)$ and $\ZD= \CZ_+(\D) \fN^{-1}$.
\end{lemma}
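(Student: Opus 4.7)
\textbf{Proof plan for Lemma \ref{l.surgerybasis}.} The strategy is to handle (i) by combining a spanning argument with a direct injectivity argument through $\theta$, and (ii) by exploiting the observation that $\Dess$-edges are never involved in a forbidden commutation pair.

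For the spanning claim in (i), I would start from the presentation of $\CZ_+(\D)$ coming out of Proposition \ref{p.ztox}(a): it is the $R[\Hu]$-algebra with generators $\D\cup \D_\mon^*$ subject to \eqref{eq.PP} and \eqref{e.zrelations}. Any element is a polynomial in these generators. Using \eqref{eq.PP} I can $q$-reorder adjacent letters at will, except I cannot directly move $a$ past $a^*$ for $a\in \D_\mon$. Whenever such a pair becomes adjacent, \eqref{e.zrelations} rewrites $aa^*=q^2b^2+q^{-2}c^2+\beta bc$ as a sum using only $\Dess\cup \Hu$, strictly decreasing the number of $(a,a^*)$ adjacencies. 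Induction on this count reduces every element to an $R[\Hu]$-combination of Weyl-normalized monomials $\D^{\{\bk\}}$ with $\bk\in \BN^\Dess\times \BZ^{\D_\mon}$, as desired.

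For the linear independence of $\BDps$ and the injectivity of $\theta:\CZ_+(\D)\to \cS$, I would treat the two simultaneously. Each generator $a\in \D\cup \D_\mon^*$ is a $\cP$-arc, hence lies in $\cS$, so $\theta(\CZ_+(\D))\subset \cS$. Given $\bk\in \BN^\Dess\times \BZ^{\D_\mon}$, let $S_\bk$ be the multiset of $\cP$-arcs consisting of $a$ with multiplicity $\bk(a)$ for $a\in \Dess$ or $a\in \D_\mon$ with $\bk(a)>0$, and $a^*$ with multiplicity $-\bk(a)$ for $a\in \D_\mon$ with $\bk(a)<0$. The arcs in $S_\bk$ are pairwise non-intersecting in $\Sigma\setminus \cP$: we never take both $a$ and $a^*$ for the same $a\in \D_\mon$, and the remaining pairs all lie within a single quasitriangulation. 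The Weyl normalization of $S_\bk$ is therefore an essential $\cP$-tangle $T_\bk\in \BB_{\SM}$. Since the vertex matrix $P$ encodes precisely the reordering $q$-factors of Figure \ref{fig:boundary} (compare \eqref{eq.35} and \eqref{eq.simul}), the abstract Weyl normalization in $\XD$ matches the Weyl product in $\cS$, giving $\theta(\D^{\{\bk\}})=T_\bk$. Because the $\cP$-arcs in $\D\cup \D_\mon^*$ are pairwise non-$\cP$-isotopic, distinct $\bk$ produce distinct $T_\bk\in \BB_{\SM}$, yielding both the $R[\Hu]$-linear independence of $\BDps$ and the injectivity of $\theta$.

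For (ii), I would first verify that $\fN$ is right Ore. For any $a\in \Dess$ and any generator $y\in \D\cup \D_\mon^*$, the pair $(a,y)$ is never excluded from \eqref{eq.PP} (since $a\notin \D_\mon\cup \D_\mon^*$), so $a$ $q$-commutes with $y$. Hence any $\Dess$-monomial $u\in \fN$ $q$-commutes with every basis element of $\BDps$. Expanding $x\in \CZ_+(\D)$ as $x=\sum c_\bk \D^{\{\bk\}}$ gives $xu = u\bigl(\sum c_\bk q^{-m_\bk}\D^{\{\bk\}}\bigr)\in u\CZ_+(\D)$; the left Ore property is symmetric. Finally, $\CZ(\D)$ is generated over $\CZ_+(\D)$ by $a^{-1}$ for $a\in \Dess$, so $\CZ(\D)=\CZ_+(\D)\fN^{-1}$. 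The principal obstacle will be the bookkeeping in the injectivity step: checking that the $q$-prefactor emerging from the abstract Weyl normalization in $\XD$ agrees with the $q$-prefactor introduced by \eqref{eq.simul} when a $\cP$-tangle is lifted to $\cS$, so that $\theta(\D^{\{\bk\}})$ lands exactly on $T_\bk$ rather than a scalar multiple of it.
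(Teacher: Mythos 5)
Your proposal is correct and follows essentially the same route as the paper: spanning by rewriting monomials via the $q$-commutations and the relation $aa^*=q^2b^2+q^{-2}c^2+\beta bc$ with induction on the number of monogon letters, linear independence and injectivity of $\theta$ by identifying each $\D^{\{\bk\}}$ with a distinct element of the preferred basis $\BB_{\SM}$, and the Ore property from the fact that $\Dess$-monomials $q$-commute with every generator. The only detail worth adding in (ii) is that elements of $\fN$ are regular, which follows immediately from the injectivity of $\theta$ established in (i) together with $\cS$ being a domain.
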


\begin{proof} 
(i) Clearly $\CZ_+(\D)$ is $R[\Hu]$-spanned by monomials in elements of $\D \cup \D_\mon^*$. 
Since each $b\in \Dess$ will $q$-commute with any element of $\D \cup \D_\mon^*$, 
every monomial in elements of $\D \cup \D_\mon^*$ is equal to, up to a factor which is a power of $q$, an element of the form
\be
\lbl{eq.1q}
x= a_1 \dots a_l \,  \Dess^\bk, \quad \bk\in \BN^{\Dess},
\ee
where $a_i \in \D_\mon\cup \D_\mon^*$ and $\Dess^\bk$ is understood to be a $\Dess$-monomial. Each $a \in \D_\mon\cup \D_\mon^*$ commutes with every element of $\D_\mon\cup \D_\mon^*$ except for $a^*$. 
If for any $a\in \D_\mon \cup \D_\mon^*$, $\{a,a^*\} \not\subset \{a_1,\dots, a_l\}$, then  $x\in \BDps$, up to a factor which is a power of $q$.

If $\{a,a^*\} \subset \{a_1,\dots, a_l\}$, then we can permute the product to bring one $a$ next to one $a^*$, and relation \eqref{e.zrelations} shows that $x$ is equal to an $R[\Hu]$-linear combination of elements of the form \eqref{eq.1q} each of which have a smaller number of $a,a^*$. Induction shows that elements $x$ of the form \eqref{eq.1q} are linear combinations of elements of the same form \eqref{eq.1q} in which not both $a$ and $a^*$ appear for every $a\in \D_\mon \cup \D_\mon^*$. This shows $\BDps$ spans $\CZ_+(\D)$ as an $R[\Hu]$-module.

The geometric realization of elements in $\BDps$ (i.e. their image in $\cS$) shows that $\theta$ maps $\BDps$ injectively into the preferred basis $B_{\SM}$ of $\cS$.
 This shows that $B_{\CZ,+}$ must be $R[\Hu]$-linearly independent, that $B_{\CZ,+}$ is an $R[\Hu]$-basis of $\CZ_+(\D)$, and that $f$ maps $\CZ_+(\D)$ injectively into $\cS$.
 
(ii) As $\CZ_+(\D)$ embeds in $\cS$, which is a domain, $\fN$ contains only regular elements.
To get $\ZD$ from $\CZ_+(\D)$ we need to invert all $a\in \Dess$. 
 As every $a\in \Dess$ will $q$-commute with any other generator, 
every element  of $\ZD$ has the form $x \fn^{-1}$ and also the form $(\fn')^{-1} x'$, where $x,x'\in\CZ_+(\D)$ and $\fn, \fn'$ are $\Dess$-monomials. Thus $\ZD$ is a  ring of fractions of $\CZ_+(\D)$ with respect to $\fN$. This shows that $\fN$ is a two-sided Ore set of $\CZ_+(\D)$ and $\CZ(\D) = \CZ_+(\D) \fN^{-1}$.
\end{proof}

Suppose $\theta(x)=0$ where $x\in \CZ$. Then $x=y \fn^{-1}$ for some $y\in \CZ_+(\D)$ and  $\fn\in \fN$. Hence $\theta(y) = \theta(x) \theta(\fn)=0$. 
 Lemma \ref{l.surgerybasis} shows $y=0$. Consequently $x=0$. This proves the injectivity of~$\theta$.

(b) Suppose $x \in \cS$. Using Lemma \ref{r.monomial} with $S=\Dess$, there is a $\Dess$-monomial $\mathfrak{n}$ such that   $\mu(a, x \mathfrak{n}) =0$ for all $a\in \Dess$. Then $x \mathfrak{n}$ is an $R$-linear combination of essential $\cP$-tangles which do not intersect any edge in $\Dess$ by Lemma \ref{l.FHS}. By Lemma \ref{r.sinczp}, it follows that $x \mathfrak{n} \in  \CZ_+(\D)$.
Hence $x = (x \mathfrak{n}) \mathfrak{n}^{-1} \in \CZ_+(\D)  \fN^{-1}  = \ZD$. This proves $\cS \subset \ZD$.

(c) As any element of $\ZD$ may be written as $x \fn^{-1}$, where $x\in\CZ_+(\D)$ and $\fn$ is a $\Dess$-monomial, and $\BDps$ spans $\CZ_+(\D)$ as an $R[\Hu]$-module, we have that $\BDs$ spans $\CZ(\D)$ as an $R[\Hu]$-module. On the other hand, suppose we have a  $R[\Hu]$-linear combination of
$\BDs$ giving 0:
$$ \sum_i c_i\,  \D^{\{\bk_i\}} =0, \quad c_i \in R[\Hu].$$
Multipliying on the right by $(\Dess)^\bk$ where $\bk(a)$ is sufficiently large for each $a\in \Dess$, we get
$$ \sum_i q^{l_i/2}\, c_i \, \D^{\{\bk'_i\}} =0, \quad l_i \in \BZ,$$
where $\bk'_i(a) \ge 0$ for all $a\in \Dess$. This means each $\D^{\{\bk'_i\}}$ is in $\BDps$, an $R[\Hu]$-basis of $\CZ_+(\D)$. It follows that $c_i=0$ for all $i$. Hence, $\BDs$ is 
linearly independent over 
$R[\Hu]$, and consequently an $R[\Hu]$-basis of $\ZD$.
\end{proof}

\begin{lemma}\lbl{l.skeinsurgeryextension} An $R$-algebra homomorphism
$g: \cS \to A$  extends to an $R$-algebra homomorphism $\CZ(\D) \to A$ if and only if $g(a)$ is invertible for all $a \in \Dess$, and furthermore the extension is unique.

\end{lemma}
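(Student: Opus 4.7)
The ``only if'' direction is immediate: in $\ZD$ each $a\in\Dess$ is invertible (with explicit inverse $a^{-1}\in\ZD$), so if the extension $\tilde g$ exists, then $g(a)=\tilde g(a)$ has two-sided inverse $\tilde g(a^{-1})$ in $A$.

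For the ``if'' direction, the plan is to exploit the description of $\ZD$ as an Ore localization provided by Lemma \ref{l.surgerybasis}(ii). First I would recall that $\CZ_+(\D)\subset\cS$ (the injection $\theta$ of Lemma \ref{l.surgerybasis}(i) identifies $\CZ_+(\D)$ with a subalgebra of $\cS$), so $g$ is already defined on $\CZ_+(\D)$. By Lemma \ref{l.surgerybasis}(ii), $\ZD=\CZ_+(\D)\,\fN^{-1}$ where $\fN$ is the two-sided Ore set generated by $\Dess$-monomials. Any $\fn\in\fN$ is, up to a unit in $R$, a product of elements of $\Dess$, so the hypothesis that $g(a)$ is invertible for every $a\in\Dess$ forces $g(\fn)\in A^\times$ for every $\fn\in\fN$.

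Now I would invoke the universal property of two-sided Ore localization: the restriction $g|_{\CZ_+(\D)}$ sends $\fN$ into $A^\times$, hence extends uniquely to an $R$-algebra homomorphism $\tilde g\colon \ZD=\CZ_+(\D)\fN^{-1}\to A$ by the rule $\tilde g(x\fn^{-1})=g(x)g(\fn)^{-1}$. It then remains to check that $\tilde g$ restricts to $g$ on all of $\cS$ (and not merely on $\CZ_+(\D)$). For this I would use the argument from the proof of Proposition \ref{p.ztox}(b): given $s\in\cS$, Lemma \ref{r.monomial} (applied with $S=\Dess$) together with Lemma \ref{r.sinczp} produces an $\fn\in\fN$ with $s\fn\in\CZ_+(\D)$. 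Then
\begin{equation*}
g(s)\,g(\fn)=g(s\fn)=\tilde g(s\fn)=\tilde g(s)\,\tilde g(\fn)=\tilde g(s)\,g(\fn),
\end{equation*}
and cancelling the invertible $g(\fn)$ on the right gives $g(s)=\tilde g(s)$.

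Uniqueness is then automatic: the value of any extension is prescribed by $g$ on $\cS\supset\CZ_+(\D)\supset\D\cup\D_\mon^*$, and forced to be $g(a)^{-1}$ on the remaining generators $a^{-1}$ ($a\in\Dess$) of $\ZD$, and these together generate $\ZD$ as an $R$-algebra by Proposition \ref{p.ztox}(a). The only nontrivial step is well-definedness of the extension, which is exactly the Ore localization step; the compatibility $\tilde g|_\cS=g$ beyond $\CZ_+(\D)$ is the main thing to verify, but it follows directly from the surgery reduction already used to prove $\cS\subset\ZD$.
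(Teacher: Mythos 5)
Your proof is correct and rests on the same core fact as the paper's: the Ore localization description $\ZD=\CZ_+(\D)\fN^{-1}$ from Lemma \ref{l.surgerybasis}(ii) together with the universal property of Ore localization. The paper reaches the conclusion a step faster by invoking Proposition \ref{r.sandwich0} to upgrade this to $\ZD=\cS\,\fN^{-1}$ and then applying universality directly to $g$ on $\cS$, whereas you localize $\CZ_+(\D)$ first and then verify by hand (via $s\fn\in\CZ_+(\D)$ and cancellation of $g(\fn)$) that the extension agrees with $g$ on all of $\cS$; both routes are valid and the variation is minor.
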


\begin{proof} We have $\CZ_+(\D)\subset \cS\subset \CZ(\D)= \CZ_+(\D) \fN^{-1}$. By Proposition \ref{r.sandwich0}, $\fN$ is a two-sided Ore subset of $\cS$ and $\ZD= \cS \fN^{-1}$. The lemma follows from the universality of the Ore extension. 
\end{proof}

\subsection{Adding marked points}\lbl{sec.addingmarkedpoints}
Let $p\in \pS \setminus \cP$, and $\cP'= \cP \cup \{ p\}$. The identity map $\iota: \Sigma\to \Sigma$ induces an $R$-algebra embedding $\iota_*:\cS\SM \to \cS(\Sigma,\cP')$, see Proposition \ref{r.func}. After choosing how to extend a $\cP$-quasitriangulation $\D$ to be a $\cP'$-quasitriangulation $\D'$, we will show  that $\iota_*$ has a unique extension to an $R$-algebra homomorphism $\Psi: \CZ(\D) \to \CZ(\D')$ which makes the following diagram commute. The map $\Psi$ describes how the skein coordinates change.

\be \lbl{eq.cd1}
\begin{tikzcd}
\cS(\Sigma,\cP) \arrow[d,"\iota_*"] \arrow[r,hook,"\vpD"] & \CZ(\D) \arrow[d,"\Psi"] \\
\cS(\Sigma,\cP') \arrow[r,hook,"\vpDp"] & \CZ(\D')
\end{tikzcd}
\ee

There are two scenarios to consider: adding a marked point to an unmarked boundary component or to a boundary edge.


\def\hD{\widehat{\D}}

\def\SMp{(\Sigma, \cP')}
\subsection{Scenario 1: Adding a marked point to a boundary edge}\lbl{sec.addmarkedptboundary}
Suppose $a\subset \pS$ is a boundary $\cP$-arc of $\SM$ and $p$ is a point in the interior of $a$.
Let $\cP'=\cP \cup \{p\}$.  The set $\Hu'$  of unmarked boundary components of the new marked surface $(\Sigma,\cP')$ is equal to~$\Hu$.

Let $\D$ be a $\cP$-quasitriangulation of $\SM$. 
 Define a $\cP'$-quasitriangulation $\D'$ of $\Sigma$ by $\D':=\D \cup \{a_1,a_2\}$ as shown in Figure \ref{fig:boundarypoint} (where we have $\cP'$-isotoped $a$ away from $\partial \Sigma$ in $\D'$).

\FIGc{boundarypoint}{Adding a marked point to a boundary edge. The shaded part is a subset of the interior of $\Sigma$.}{2cm}

Recall that $\CZ(\D)$ is weakly generated by $\D \cup \D_\mon^*$, and that $\Dess= \D\setminus \D_\mon$.

\begin{proposition}\lbl{prop.addptsurgerytorus} There exists a unique $R[\Hu]$-algebra homomorphism $\Psi: \CZ(\D) \to \CZ(\D')$ which makes diagram \eqref{eq.cd1} commutative. Moreover, $\Psi$ is given by  $\Psi(a)=a$ for all $a \in \D \cup \D_\mon^*$.
\end{proposition}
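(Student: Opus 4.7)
The plan is to apply Lemma \ref{l.skeinsurgeryextension} to the composite
$$g := \vpDp \circ \iota_* \colon \cS\SM \to \CZ(\D').$$
That lemma yields a unique $R$-algebra extension $\Psi \colon \CZ(\D) \to \CZ(\D')$ provided $g(a)$ is invertible in $\CZ(\D')$ for every $a \in \Dess$; commutativity of diagram \eqref{eq.cd1} is then automatic from the construction, and since $\iota_*$ and $\vpDp$ are both $R[\Hu]$-linear, so is the extension by uniqueness.

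To verify the invertibility hypothesis, observe that subdividing a boundary edge does not alter the unmarked boundary components (so $\Hu' = \Hu$) and leaves every holed monogon intact. Consequently $\D'_{\mon} = \D_{\mon}$, and combined with $\D \subset \D'$ this gives $\Dess \subset \D'_{\mathrm{ess}}$. For each $a \in \Dess$, Theorem \ref{r.torus}(a) applied to $\D'$ yields $g(a) = \vpDp(a) = a \in \CZ(\D')$, which is invertible by the very definition of $\CZ(\D')$. Note in particular that the subdivided boundary edge $a$, though promoted from a boundary edge of $\D$ to an inner edge of $\D'$, remains essential and hence still invertible.

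The explicit description $\Psi(a) = a$ on $\D \cup \D_\mon^*$ is then straightforward. For $a \in \D$ it is immediate from the computation above. For $a^* \in \D_\mon^*$, choose the flips for the monogon edges of $\D'$ to coincide with those chosen for $\D$, which is legitimate because the monogons of $\SM$ and $\SMp$ are identical and lie away from the subdivided boundary edge. Then $a^* \in (\D')_\mon^*$ is a generator of $\CZ(\D')$, and
$$\Psi(a^*) = g(a^*) = \vpDp(\iota_*(a^*)) = \vpDp(a^*) = a^*.$$

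There is no significant obstacle: the real work is packaged into the universal property of the Ore extension supplied by Lemma \ref{l.skeinsurgeryextension}, together with the observation that the modification is entirely local to a boundary edge and therefore preserves the essential-vs-monogon dichotomy that governs which generators must be inverted.
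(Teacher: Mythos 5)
Your proof is correct and follows essentially the same route as the paper: both reduce the existence and uniqueness of $\Psi$ to Lemma \ref{l.skeinsurgeryextension} via the observation that $\Dess \subset \D'_{\mathrm{ess}}$, so that $\vpDp\circ\iota_*(a)$ is invertible for $a \in \Dess$. You simply spell out the details the paper leaves implicit (why $\D'_{\mon}=\D_{\mon}$, the consistent choice of flips for $\D_\mon^*$, and the computation $\Psi(a^*)=a^*$), all of which are accurate.
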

\begin{proof} Identify $\cS\SM$ with its image under $\vpD$ and $\cS\SMp$ with its image under $\vpDp$.
Note that $\Dess \subset \D'_{\mathrm{ess}}$. Hence if $a \in \Dess$ then $\iota_*(a)=a$ is invertible in $\CZ(\D')$.
That 
$\Psi$ exists uniquely follows from Lemma \ref{l.skeinsurgeryextension} because $\vpDp\circ \iota_*(a)$ is invertible for all $a \in \Dess$. That $\Psi(a) = a$ for all $a \in \D \cup \D_\mon^*$ follows immediately.
\end{proof}

\subsection{Scenario 2: Adding a marked point to an unmarked component} Suppose $\beta\in \Hu$ is an unmarked boundary component of $\SM$. Choose a point $p \in \beta$ and set $\cP' = \cP \cup \{p\}$. We call the new marked surface $(\Sigma, \cP')$ and write $\Hu'= \Hu \setminus \beta$ for its set of unmarked boundary components. 

Suppose $\D$ is a quasitriangulation of $\SM$. 
Let $a \in \D_\mon$ be the monogon edge bounding the eye containing the unmarked boundary component $\beta$ (as defined in Figure \ref{fig:eye}), and $b,c \in \D$ be the edges immediately clockwise and counterclockwise to $a$ as depicted on the left in Figure \ref{fig:holepoint}. To get a triangulation of the eye containing $\beta$ with the added marked point $p$, we need to add 3 edges $d,e,f$ as depicted on the right side of Figure \ref{fig:holepoint}. Here 
 $f$ is the boundary $\cP'$-arc whose ends are both $p$. By relabeling, we can assume that $e$ is counterclockwise to $d$ at $p$.
 Up to isotopy of $\Sigma$ fixing every point in the complement of the  monogon, there is only one choice for such $d$ and $e$. Then $\D'=\D \cup \{d,e,f\}$ is a quasitriangulation of $(\Sigma,\cP')$.

\FIGc{holepoint}{From $\D$ to $\D'$.}{3.1cm}


Since $\Hu\neq \Hu'$,  it is not appropriate to consider modules and algebras over $R[\Hu]$. 
Rather we will consider both $\cS\SM$ and $\cS\SMp$ as algebras over $R$. As an $R$-algebra, $\CZ(\D)$ is weakly generated by $\D \cup \D_\mon^*\cup \Hu$.

\begin{proposition}\lbl{lem.markedpthole} There exists a unique $R$-algebra homomorphism $\Psi: \CZ(\D) \to \CZ(\D')$ which makes diagram \eqref{eq.cd1} commutative. Moreover, for
$z \in \D \cup \D_\mon^*\cup \Hu$ we have
\be \lbl{eq.1f}
\Psi(z) = 
\begin{cases} z  \quad &\text{if } \ z \notin \{\beta,a^*\}, \\
[d^{-1}e]+[ad^{-1}e^{-1}f]+[de^{-1}]   &\text{if } \ z = \beta,\\ 
[a^{-1}b^2] + [a^{-1}c^2] + [a^{-1}bcd^{-1}e] + [bcd^{-1}e^{-1}f] + [a^{-1}bcde^{-1}] &\text{if } \ z = a^*.
\end{cases} 
\ee
\end{proposition}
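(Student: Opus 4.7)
The plan is to parallel the proof of Proposition \ref{prop.addptsurgerytorus}, establishing existence and uniqueness via Lemma \ref{l.skeinsurgeryextension} before computing $\Psi$ on each generator of $\CZ(\D)$.

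\textbf{Existence and uniqueness.} Since adding the marked point $p$ removes $\beta$ from $\Hu$ but leaves all other unmarked components unchanged, the only monogon edge to disappear is $a$; thus $\D'_\mon = \D_\mon \setminus \{a\}$ and therefore $\D'_{\mathrm{ess}} = \Dess \cup \{a,d,e,f\} \supset \Dess$. For every $c \in \Dess$, the element $(\vpDp \circ \iota_*)(c) = c$ lies in $\D'_{\mathrm{ess}}$ and is thus invertible in $\CZ(\D')$. Lemma \ref{l.skeinsurgeryextension} then produces a unique $R$-algebra extension $\Psi : \CZ(\D) \to \CZ(\D')$ of $\vpDp \circ \iota_*$ that makes diagram \eqref{eq.cd1} commute.

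\textbf{Easy generators.} For $z \in (\D \setminus \{a\}) \cup (\D_\mon^* \setminus \{a^*\}) \cup (\Hu \setminus \{\beta\})$, one has $\iota_*(z) = z$ as a tangle in $(\Sigma, \cP')$, and $\vpDp$ sends $z$ to itself since $z$ is either an edge of $\D'$, a flipped monogon edge of $\D'$ corresponding to an unchanged eye around some $\beta' \in \Hu'$, or an element of $\Hu'$. Likewise $\Psi(a) = a$ since $a \in \D'$.

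\textbf{The element $\Psi(\beta)$.} The element $\iota_*(\beta) \in \cS(\Sigma, \cP')$ is a $\cP'$-knot in the interior of $\Sigma$ parallel to the boundary component now containing $p$. $\cP'$-isotope it into the annular region bounded by $a$ and $f$ so that it crosses each of $d$ and $e$ transversely once and is disjoint from every other edge of $\D'$. Applying the Kauffman skein relation at each of the two crossings yields four terms with coefficients in $\{q^{\pm 2}, 1\}$. Carefully tracking how the loop together with $d$ and $e$ reconnect in each resolution, recognizing trivial loops, trivial arcs, and boundary-parallel arcs, and rewriting each surviving summand in Weyl-normalized form produces the three-term expression $[d^{-1}e] + [ad^{-1}e^{-1}f] + [de^{-1}]$.

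\textbf{The element $\Psi(a^*)$.} Since $\iota_*$ is a ring homomorphism, the flip relation from Proposition \ref{r.42}(c) Case 2 gives $aa^* = q^2 b^2 + q^{-2} c^2 + \beta bc$ inside $\cS(\Sigma, \cP')$. Applying $\Psi$ and using the values obtained above, $a\, \Psi(a^*) = q^2 b^2 + q^{-2} c^2 + \Psi(\beta)\, bc$. Since $a$ is now invertible in $\CZ(\D')$, multiply on the left by $a^{-1}$ and substitute the formula for $\Psi(\beta)$; distributing and collecting each summand into Weyl-normalized form (keeping careful track of $q$-commutation between $a^{-1}, b, c, d, e, f$) produces the claimed five-term expression.

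\textbf{Main obstacle.} The hard step is the computation of $\Psi(\beta)$. Since $\beta$ represents a nontrivial element of $\pi_1(\Sigma)$, it cannot be confined to either of the two triangles obtained by cutting the eye along $d, e$, and therefore necessarily meets both inner edges. The explicit enumeration of the four skein resolutions, the identification of any resulting trivial components or boundary-parallel arcs, and the subsequent Weyl-normalization constitute the main combinatorial/geometric challenge; once this formula is in hand, Step 4 is a routine algebraic manipulation.
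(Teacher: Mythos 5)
Your proposal takes essentially the same route as the paper: existence and uniqueness come from Lemma \ref{l.skeinsurgeryextension} via $\Dess \subset \D'_{\mathrm{ess}}$, and the formula \eqref{eq.1f} is obtained by evaluating $\iota_*(z)$ on the weak generators (the paper likewise dismisses this as ``a simple calculation,'' so leaving $\Psi(\beta)$ unexecuted is no worse than the original, and deriving $\Psi(a^*)$ from $\Psi(a)\Psi(a^*)=q^2b^2+q^{-2}c^2+\Psi(\beta)bc$ using \eqref{e.zrelations} is a clean reduction). One small imprecision to fix: a $\cP'$-knot and a triangulation edge drawn in $\Sigma$ have no ``crossings'' to which the Kauffman relation applies; the relation is applied to the stacked products $d\cdot\beta$ and $e\cdot\beta$ in $\Sigma\times(-1,1)$, exactly as in the computation of Lemma \ref{r.markedannulus}, after which one left-multiplies by the inverses.
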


\begin{proof} Again $\Dess\subset \D'_{\mathrm{ess}}$, so the existence and uniqueness of $\Psi$ follows.
Formula  \eqref{eq.1f} follows from a simple calculation of the value of $\iota_*(z)$.
\end{proof}


\def\SpM{(\Sigma',\cP)}
\subsection{Plugging a hole} The more interesting operation is plugging a hole.

Fix an unmarked boundary component $\beta \in \Hu$. Let $\Sigma'$ be the result of gluing a disk to $\Sigma$ along $\beta$. Then $(\Sigma', \cP)$ is another marked surface. The natural morphism
$ \iota: \SM \to \SpM$ gives rise to an $R$-algebra homomorphism $\iota_*: \cS\SM\to \cS\SpM$. Since $\iota_*$ maps the preferred $R$-basis $B_{\SM}$ of $\cS\SM$ onto a set containing the preferred $R$-basis $B_{(\Sigma',\cP)}$ of $\cS\SpM$, the map $\iota_*$ is surjective.
\FIGc{surgery}{From $\D$ to $\D'$}{2.3cm}

Suppose $\D$ is a quasitriangulation of $\SM$.
Let $a\in \D$ be the monogon edge bounding the eye containing the unmarked boundary $\beta$ and $\tau$ be the triangle having $a$ as an edge. Let $a,b,c$ be the edges of $\tau$ in counterclockwise order, as in Figure \ref{fig:surgery}. Let $\D'= \D \setminus \{a,b\}$. Then $\D'$ is a $\cP$-quasitriangulation of $\Sigma'$.

We cannot extend $\iota_*: \cS\to \cS'$ to an $R$-algebra homomorphism  $ \sX(\D) \to \sX(\D')$, since $\iota_*(a)=0$ but $a$ is invertible in $\XD$. 
This is the reason why we choose to work with the smaller algebra $\CZ(\D)$ which does not contain $a^{-1}$.


Recall that as an $R$-algebra, $\ZD$ is weakly generated by $\Hu \cup \D_\mon^* \cup \D$.

\def\bPsi{\bar \Psi}
\def\bX{\bar X}
\begin{proposition}\lbl{t.holetrick} There exists a unique $R$-algebra homomorphism 
$\Psi: \CZ(\D) \to \CZ(\D')$ such that the following diagram
\be\lbl{d.holesurgery}
\begin{tikzcd}
\cS(\Sigma,\cP) \arrow[d,"\iota_*"] \arrow[r,hook,"\vpD"] & \CZ(\D) \arrow[d,"\Psi"] \\
\cS(\Sigma',\cP) \arrow[r,hook,"\vpDp"] & \CZ(\D')
\end{tikzcd}
\ee
is commutative. Explicitly, $\Psi$ is defined on the generators in $\Hu \cup \D_\mon^* \cup \D$ as follows: 
\begin{align}
\Psi(e)& = e  \quad \text{if} \ e  \in (\Hu \cup \D_\mon^* \cup \D)\setminus \{a,a^*, b, \beta\}
\lbl{eq.2a}\\
\Psi(a)&= \Psi(a^*)=0, \ 
 \Psi(b)=c, \ 
 \Psi(\beta)= -q^2 - q^{-2}.
 \lbl{eq.2b}
\end{align}
The map  $\Psi$ is surjective and its  kernel is  the ideal $I$ of $\ZD$ generated by $a, a^*, b-c, \beta + q^2 + q^{-2}$. 
\end{proposition}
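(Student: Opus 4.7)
The strategy is to invoke Lemma~\ref{l.skeinsurgeryextension} for existence and uniqueness, compute $\Psi$ on generators geometrically, then identify the kernel by constructing an explicit inverse to the induced surjection $\CZ(\D)/I \to \CZ(\D')$.

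For existence and uniqueness, I apply Lemma~\ref{l.skeinsurgeryextension} to the composition $g := \vpDp \circ \iota_* : \cS\SM \to \CZ(\D')$. The hypothesis requires that $g(e)$ be invertible in $\CZ(\D')$ for every $e \in \Dess$. For $e \in \Dess \setminus \{b\}$ the arc $e$ persists as an essential edge of $\D'$ (the only lost monogon corresponds to the plugged $\beta$), so $g(e) = e$ is invertible. For $e = b$, a $\cP$-isotopy across the disk formed by $\tau$ together with the plugged monogon identifies $b$ with $c$ in $\Sigma'$, so $g(b) = c \in \D'_{\mathrm{ess}}$ is invertible. The lemma then yields a unique $\Psi$ making \eqref{d.holesurgery} commute.

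The values of $\Psi$ on the $R$-algebra generators $\Hu \cup \D_\mon^* \cup \D$ follow from a direct geometric computation of $\iota_*$: generators disjoint from the plugged region are unchanged, giving \eqref{eq.2a}; the isotopy $b \simeq c$ yields $\Psi(b) = c$; the loop $a$ bounds a disk in $\Sigma'$, so $\iota_*(a) = 0$ by the trivial arc relation; the unmarked loop $\beta$ becomes a trivial knot, so $\iota_*(\beta) = -q^2 - q^{-2}$ by the trivial knot relation; finally $a^*$ is realized as a loop at the apex of $\tau$ encircling $\beta$ (the flip at $a$ cuts the eye of $\beta$ into a new holed monogon bounded by $a^*$ together with a triangle having edges $a^*, b, c$), and after plugging this loop bounds a disk in $\Sigma'$, giving $\iota_*(a^*) = 0$. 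Surjectivity follows because the image of $\Psi$ contains $\D' \cup (\D'_\mon)^* \cup \Hu'$ together with the inverses of the essential edges of $\D'$, and this set weakly generates $\CZ(\D')$.

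The kernel identification is the substantive step. The inclusion $I \subseteq \ker \Psi$ is immediate from the computations above, so $\Psi$ factors through a surjection $\bar\Psi : \CZ(\D)/I \to \CZ(\D')$. To show $\bar\Psi$ is an isomorphism I construct a left inverse $\Phi : \CZ(\D') \to \CZ(\D)/I$ by sending each $R$-algebra generator of $\CZ(\D')$ to its canonical lift in $\CZ(\D)/I$. Checking that $\Phi$ is well-defined amounts to verifying that the defining relations of $\CZ(\D')$ from Proposition~\ref{p.ztox}(a) remain valid in $\CZ(\D)/I$: the $q$-commutation relations lift verbatim, since the vertex matrix entry $P(x,y)$ depends only on the local configuration near $\cP$ and is preserved for pairs of generators shared by $\D$ and $\D'$; the monogon-flip relations $u u^* = q^2 x^2 + q^{-2} y^2 + \beta' x y$ for $\beta' \in \Hu'$ involve only generators unchanged from $\D$ to $\D'$; and each essential edge of $\D'$ is invertible in $\CZ(\D)/I$ as the image of an invertible essential edge of $\CZ(\D)$ (with $c$ handling $b$). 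A routine check on the generators of $\CZ(\D)/I$ then yields $\Phi \circ \bar\Psi = \id$. The main obstacle throughout is this kernel identification --- a priori the quotient could collapse further --- but the generators of $I$ are chosen precisely to decouple the monogon around $\beta$ from the rest of the quasitriangulation, so no hidden relations arise.
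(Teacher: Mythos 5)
Your argument matches the paper's for existence, uniqueness, surjectivity, and the values of $\Psi$ on generators (both rest on Lemma \ref{l.skeinsurgeryextension} applied to $\vpDp\circ\iota_*$ followed by a direct geometric computation of $\iota_*$), but you identify the kernel by a genuinely different route. The paper never constructs an inverse map: it shows $\bPsi:\ZD/I\to\CZ(\D')$ is bijective by pure basis bookkeeping, using Proposition \ref{p.ztox}(c) --- the monomials $\D^{\{\bk\}}\,\Hu^{\bn}$ with $\bk(a)=\bk(b)=\bn(\beta)=0$ are sent bijectively onto the monomial basis of $\CZ(\D')$, and every other basis monomial of $\ZD$ either vanishes mod $I$ (it contains a factor of $a$ or $a^*$) or reduces to one of these by substituting $c$ for $b$ and $-q^2-q^{-2}$ for $\beta$, so that this set already spans $\ZD/I$. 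You instead use Proposition \ref{p.ztox}(a) as a presentation of $\CZ(\D')$ to build a left inverse $\Phi$ of $\bPsi$. That is legitimate and makes the ``no further collapse'' point conceptually transparent, but it shifts all the content into the relation check: you must know that the vertex-matrix entries and the monogon relations of $\D'$ are literally those of $\D$ restricted to the surviving generators. This holds generically, but fails verbatim in degenerate configurations (e.g.\ if $b$ is itself a monogon edge of $\D$, then after plugging $\beta$ the arc $c$ becomes the monogon edge of the corresponding unmarked component, so $\D'_\mon\neq\D_\mon\setminus\{a\}$ and the flipped generators and monogon relations of $\D'$ are not simply inherited); the paper's formula \eqref{eq.2a} and its basis argument make the same tacit genericity assumption, so this is not a gap relative to the paper's own standard, but it is where your version would need the most care if written out in full. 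A minor advantage of the paper's route is that it only needs the basis statement \ref{p.ztox}(c), not the presentation \ref{p.ztox}(a).
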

\begin{proof} Identify $\cS\SM$ with its image under $\vpD$ and $\cS\SMp$ with its image under $\vpDp$.

The existence and uniqueness of 
 $\Psi$ follows from Lemma \ref{l.skeinsurgeryextension}, since $ \iota_*(x)$ is invertible in $\CZ(\D')$ for all $x \in \D \setminus \D_\mon$. By checking the value of $ \iota_*(x)$ for $x\in \Hu \cup \D_\mon^* \cup \D$, we get \eqref{eq.2a} and \eqref{eq.2b}. 
 It follows that $I$ is in the kernel $\ker\Psi$. Hence $\Psi$ descends to an $R$-algebra homomorphism
$$ \bPsi: \ZD/I \to \CZ(\D').$$
We will prove that $\bPsi$ is bijective by showing that there is  an $R$-basis $X'$ of $\CZ(\D')$ and an $R$-spanning set $\bX$  of $ \ZD/I$ such that $\bPsi$ maps $\bX$ bijectively onto $X'$. Then $\bPsi$ is an isomorphism. Let 
\begin{align*}
X &=  \{ \D^{\{\bk\}} \, (\Hu)^\bn \mid \bk \in \BZ^{\D}, \bn \in \BN^{\Hu}  \},   \\
X_0 &=  \{ \D^{\{\bk\}} \, (\Hu)^\bn \in X \mid \bk(a)= \bk(b)=\bn(\beta)=0   \} \subset X, \\
X' &=\{ (\D')^{\{\bk\}} \, (\Hu')^\bn \mid \bk \in \BZ^{\D'}, \bn \in \BN^{\Hu'}  \}.   
\end{align*}
By Proposition \ref{p.ztox}(c), the sets $X$ and $X'$ are respectively $R$-bases of $\CZ(\D)$ and $\CZ(\D')$. As $\D'= \D \setminus \{a,b\}$ and $\Hu'= \Hu\setminus \{\beta\}$, Formula \eqref{eq.2a} shows that $\Psi$ maps $X_0$ bijectively onto $X'$. Consequently, the projection $\pi: \ZD\to \ZD/I$ maps $X_0$ bijectively onto a set $\bX$ and
$\bPsi$ maps $\bX$ bijectively onto $X'$.

It remains to be shown that  the $R$-span $R\la \bX\ra$ of $\bX$ equals $\CZ(\D)/I$. Suppose $x= \D^{\{\bk\}} \Hu^\bn  \in \Psi(X) \setminus X_0$. 
Then either $\bk(a)\neq 0$, or $\bk(b)\neq 0$, or $\bn(\beta)\neq 0$.

If $\bk(a)\neq 0$, then in $x$ there is factor of $a$ or $a^*$ which is in $I$, and hence $\pi(x)=0$.
Because $b-c$ and $\beta + q^2 + q^{-2}$ are in $ I$, in $\CZ(\D)/I$ we can replace $b$ by $c$  and $\beta$ by the scalar $-q^2 - q^{-2}$. Thus, $\pi(x) \in R\la \bX\ra$. As $X$ spans $\CZ(\D)$, this shows $\bX$ spans $\CZ(\D)/I$. The proposition is proven.

\end{proof}

\def\Trq{\text{tr}_q}
\def\TrqN{\text{tr}_{q^{N^2}}}
\def\Trxi{\text{tr}_{\xi}}
\def\circD{\accentset{\circ}{\D}}
\def\An{\mathbb A}

\section{Chebyshev-Frobenius homomorphism}

For the case when the marked set is empty,  Bonahon and Wong  \cite{BW1} constructed a remarkable algebra homomorphism, called the Chebyshev homomorphism,  from the skein algebra  with quantum parameter $q=\xi^{N^2}$ to the skein algebra with quantum parameter $q=\xi$, where $\xi$ is a complex root of unity, and $N$ is the order (as a root of unity) of $\xi^4$.
In \cite{BW1} the proof of the existence of the Chebyshev homomorphism is based on the theory of the quantum trace map \cite{BW0}. Since the result can be formulated using only elementary skein theory, Bonahon and Wong asked for a skein theoretic proof of their results. Such a proof was offered in \cite{Le2}.

 Here we extend the result of Bonahon and Wong to the case of marked 3-manifolds. 
Our proof  is different  from the two above mentioned proofs even in the case of the marked set being empty; it does not rely on many computations but rather on the functoriality of the skein algebras.


\subsection{Setting}
Throughout this section we fix a marked 3-manifold  $(M,\cN)$. The ground ring $R$ is $\BC$. Let $\Cx$ denote the set of non-zero complex numbers. A {\em root of unity} is a complex number $\xi$ such that there exists a positive integer $n$ such that $\xi^n=1$, and the smallest such $n$ is called the order of $\xi$, denote by  $\ord(\xi)$.

 The skein module $\cS\MN$ depends on the choice of  $q=\xi \in \Cx$, and we denote the skein module with this choice by $\cS_\xi(M,\cN)$.  To be precise, we also  choose and fix one of the two square roots of $\xi$ for the value of $q^{1/2}$, but the choice of $\xi^{1/2}$ is not important due the symmetry of complex conjugation.

Similarly, if $\SM$ is a marked surface with quasitriangulation $\D$, then we use the notations $\cS_\xi\SM$, $\sX_\xi(\D), \CZ_\xi(\D)$ to denote what were respectively the $\cS\SM, \sX(\D), \CZ(\D)$ of Subsections \ref{sec.torus} and \ref{sec.suralg} with ground ring $\BC$ and $q=\xi$. We will always identify $\cS_\xi\SM$ as a subset of $\sX_\xi(\D)$, its division algebra $\widetilde {\sX}_\xi(\D)$, and $\CZ_\xi(\D)$.

\subsection{Formulation of the result}\lbl{sec.formulationmain} 
For $\xi\in \Cx$ recall that $\cS_\xi\MN= \cT\MN/\Rel_\xi$, where $\cT\MN$ is the  $\BC$-vector space with basis the set of all $\cN$-isotopy classes of $\cN$-tangles in $M$  and $\Rel_\xi$ is the subspace spanned by the trivial loop relation elements, the trivial arc relation elements, and the skein relation elements, see Subsection \ref{s.def00}. For $x\in \cT\MN$ denote by $[x]_\xi$ its image in $\cS_\xi\MN= \cT\MN/\Rel_\xi$.

 For  an $\cN$-arc or an $\cN$-knot  $T$ in  $(M,\cN)$ and $k \in \BN$ let $T^{(k)}$  be {\em the $k$th framed power of $T$}, which is $k$ parallel copies of $T$ obtained using the framing, which will be considered as an $\cN$-tangle lying in a small neighborhood of $T$. The $\cN$-isotopy class of $T^{(k)}$ depends only on the $\cN$-isotopy class of $T$.

Given a polynomial $P(z) = \sum c_i z^i \in \BZ[z]$, and an $\cN$-tangle $T$ with a single component we define an element $P^{\text{fr}}(T) \in \cT\MN$ called the {\em threading of $T$ by $P$} by $P^{\text{fr}}(T) = \sum c_i T^{(i)}$. If $T$ is $\cP$-knot in a marked surface $\SM$ then $P^{\text{fr}}(T)=P(T)$. Using the definition \eqref{eq.simul} one can easily check that  $P^{\text{fr}}(T)=P(T)$ for the case when $T$ is a $\cP$-arc as well.

Fix  $N\in \BN$. Suppose  $T_N(z)= \sum c_i z^i$ is the $N$th Chebyshev polynomial of type 1 defined by~\eqref{eq.Che}.
Define a $\BC$-linear map
$$ \hPhi_{N}: \LMN \to \LMN$$ so that  if $T$ is an $\cN$-tangle then  $\hPhi_N(T)\in \LMN$ is the union of $a^{(N)}$ and $(T_N)^{\text{fr}}(\al)$ for each $\cN$-arc component $a$ and each $\cN$-knot component $\al$ of $T$.  See Subsection \ref{sec.func} for the precise definition of union for skein modules. In other words, $\hPhi_N$ is given by threading each $\cN$-arc by $z^N$ and each $\cN$-knot by $T_N(z)$. More precisely, if
the $\cN$-arc components of $T$ are $a_1, \dots, a_k$ and the $\cN$-knot components  are $\al_1, \dots, \al_l$, then

\be\lbl{eq.def1}
\hPhi_N(T) = \sum_{0\le j_1, \dots, j_l\le N} c_{j_1} \dots c_{j_l}  a_1^{(N)}
 \cup \dots \cup  a_k^{(N)} \cup \, \al_1^{(j_1)} \cup \cdots \cup \al_l^{(j_l)} \ \in \  \LMN.
 \ee

\begin{theorem}\lbl{t.ChebyshevFrobenius}
Let $(M,\cN)$ be a marked 3-manifold and  $\xi$ be a complex root of unity. Let $N:=\text{ord}(\xi^4)$ and $\ve:=\xi^{N^2}$. 

There exists
a unique $\BC$-linear map $\Phi_\xi: \cS_\ve(M,\cN) \to \cS_\xi(M,\cN)$ such that if 
$x\in \cS_\ve(M,\cN)$ is presented by an $\cN$-tangle $T$ then 
$
\Phi_\xi(x) = [\hPhi_{N}(T)]_\xi
$
in $ \Sx\MN$. 

In other words, the map $\hPhi_N: \cT\MN \to \cT\MN$ descends to a well-defined map $\Phi_\xi: \cS_\ve(M,\cN) \to \cS_\xi(M,\cN)$, meaning that the following diagram commutes. 

\[
\begin{tikzcd}\lbl{dia.Phixi}
\LMN  \arrow[d,"{[\,\cdot\,]_\ve}"] \arrow[r,"\hPhi_N"] & \LMN \arrow[d,"{[\, \cdot \, ]_\xi}"] \\
\cS_\ve\MN  \arrow[r,"\Phi_\xi"]& \cS_\xi\MN
\end{tikzcd}
\]
\end{theorem}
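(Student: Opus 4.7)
The plan is to reduce the 3-manifold statement to a statement for marked surfaces, where the quantum-torus embedding from Theorem~\ref{r.torus} converts the problem into a statement about the Frobenius homomorphism of quantum tori (Proposition~\ref{r.Frobenius}). Fix a quasitriangulable $\SM$ with quasitriangulation $\D$ and consider the square
$$
\begin{tikzcd}
\cS_\ve\SM \arrow[r,hook,"\vp_\D"] \arrow[d, dashed, "\Phi_\xi"] & \sX_\ve(\D) \arrow{d}{F_N} \\
\cS_\xi\SM \arrow[r,hook,"\vp_\D"] & \sX_\xi(\D)
\end{tikzcd}
$$
in which $F_N(X_a) = X_a^N$; this is well-defined because $\ve = \xi^{N^2}$ matches exactly the hypothesis of Proposition~\ref{r.Frobenius}. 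The goal is to show $F_N$ carries the image of $\cS_\ve\SM$ into that of $\cS_\xi\SM$ and that the induced restriction agrees with $\hPhi_N$ on $\cP$-arcs and on $\cP$-knots.

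First I would handle the arc subcase, where $\hPhi_N$ is simply threading by $z^N$. For an edge $a \in \D$, $F_N(\vp_\D(a)) = X_a^N = \vp_\D(a^{(N)})$ directly. For a $\cP$-arc $a \notin \D$, I would either switch to a quasitriangulation containing $a$ and invoke independence of quasitriangulation, or expand $\vp_\D(a)$ as a Weyl-normalized sum of monomials and apply the identity $(x^\bk)^N = x^{N \bk}$ from \eqref{eq.power}. Either way, $F_N$ on an arc-only tangle is precisely the $N$-fold framed-power map and lands back in $\cS_\xi\SM$.

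Next I would treat the knot subcase, which is where Corollary~\ref{c.32} carries the load. Given a $\cP$-knot $\al$, choose a quasitriangulation $\D$ that places $\al$ in a standard position so that $\vp_\D(\al) = [K + K^{-1} + E]$ in $\sX_\ve(\D)$, where $K$ is an invertible Weyl-normalized monomial in the edges traversed by $\al$ and $E$ is a correction term satisfying $KE = \xi^4 EK$. Since $\ord(\xi^4) = N$, Corollary~\ref{c.32} yields
$$
T_N(K + K^{-1} + E) = K^N + K^{-N} + E^N,
$$
which is manifestly in the image of $F_N$. Interpreting each term geometrically, the right-hand side equals $\vp_\D(T_N^{\fr}(\al))$ in $\cS_\xi\SM$. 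This produces the required restriction $\Phi_\xi = F_N|_{\cS_\ve\SM}$ on surfaces. Independence of $\D$ follows from the naturality of the transfer isomorphism in Proposition~\ref{r.42}: the flip formulas \eqref{eq.aa1} are compatible with the $N$th power map once $\xi$ has the specified order, so $F_N$ commutes with $\Theta_{\D,\D'}$; equivalently, the recipe just derived for $\Phi_\xi$ on arcs and knots makes no reference to $\D$.

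To pass from marked surfaces to a general marked 3-manifold $\MN$, I would, for each $\cN$-tangle $T$, choose an embedding of a thickened marked surface $\Sigma_T \times (-1,1) \hookrightarrow M$ whose image contains $T$ and whose $\cP_T \times (-1,1)$ lies in $\cN$; this is possible by taking a regular neighborhood of $T$ together with small arcs connecting $\partial T$ to itself along $\cN$. Functoriality (Subsection~\ref{sec.func}) then lets me define $\Phi_\xi([T]_\ve)$ as the image of the surface-level Chebyshev-Frobenius applied inside $\Sigma_T$. The intrinsic description on surfaces makes this independent of the choice of $\Sigma_T$, and the defining relations $\Rel_\ve$ are all local, so they are respected inside any single thickened-surface model; this gives a well-defined global $\BC$-linear map on $\cS_\ve\MN$.

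The main obstacle is the knot subcase on a surface: showing that for every $\cP$-knot $\al$ one can find a quasitriangulation $\D$ in which $\vp_\D(\al)$ genuinely takes the form $[K + K^{-1} + E]$ with $KE = \xi^4 EK$, so that Corollary~\ref{c.32} cleanly kills all middle terms. This is what reduces knot threading by the Chebyshev polynomial $T_N$ to a sum of pure $N$th powers, i.e. to the arc case. The surgery theory of Section~\ref{sec.surgery} — adding marked points on $\beta \in \Hu$ and plugging holes, with the explicit formulas of Propositions~\ref{lem.markedpthole} and~\ref{t.holetrick} — is the technical engine that lets one always arrange such a local annular model around $\al$ and then transport the computation back to the original marked surface.
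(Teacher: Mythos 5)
Your overall strategy matches the paper's: reduce to marked surfaces via the quantum-torus embedding, use the Frobenius homomorphism $F_N$ together with Corollary~\ref{c.32} in an annular model around a knot, handle homologically trivial knots with the hole-plugging surgery, establish independence of the triangulation via the Gauss binomial identity, and then pass to 3-manifolds by locality. Up to the surface-level statement (Theorem~\ref{thm.surface}) your outline is essentially the paper's proof, with one caveat: for a general $\cP$-knot $\al$ one does not find a global quasitriangulation in which $\vp_\D(\al)=[K+K^{-1}+E]$; the computation is done in a small marked annulus around $\al\cup a$ (for an arc $a$ meeting $\al$ once) and transported by functoriality of inclusions of marked surfaces, which itself requires knowing that arcs generate the division algebra and that the maps already agree on arcs.

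The genuine gap is in the final descent of $\hPhi_N$ to $\cS_\ve\MN$. ``The relations are local, so they are respected in a thickened-surface model'' is not yet an argument for the skein relation elements. Inside the thickened surface built from a neighborhood of the skein ball $B$ and the tangle, the equality $[\hPhi_N(T)]_\xi = F_\xi([T]_\ve)$ --- which is what lets you conclude from $[x]_\ve=0$ --- is proved by writing $T$ as a product of its components in the surface algebra and using multiplicativity of $F_\xi$ and of the threading. This works only when the two strands of $T$ in $B$ belong to \emph{distinct} components of $T$. When both strands belong to the same component (a self-crossing of a single $\cN$-knot), $T$ is not a product of embedded curves in the surface fiber, and threading that component by $T_N$ does not obviously commute with resolving the crossing: the resolutions $T_+$ and $T_-$ can have a different number of components than $T$, so the componentwise definition of $\hPhi_N$ produces a priori unrelated elements. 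The paper needs a separate argument here (Case II of its proof): isotope $T_+$ to create a bigon, resolve the new crossing --- legitimately, since its two strands now lie in different components of $T_+$ --- and use the kink-removal identity to recover the skein relation among $\hPhi_\xi(T)$, $\hPhi_\xi(T_+)$, $\hPhi_\xi(T_-)$. Without some such argument the map you construct is not known to be well defined on $\cS_\ve\MN$. (You also need, though it is routine, the scalar identity $T_N(-\xi^2-\xi^{-2})=-\ve^2-\ve^{-2}$ to dispose of the trivial loop relation.)
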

 Note that if $\ord(\xi^4)=N$ and $\ve=\xi^{N^2}$, then $\ve\in \{\pm 1, \pm i\}$. If $\cN=\emptyset$, then the skein module $\Se\MN$ with $\ve\in \{\pm 1, \pm i\}$
has an interpretation in terms of classical objects and is closely related to the $SL_2$-character variety, see \cite{Turaev,Bullock,PS,Sikora,Marche}.

We call $\Phi_\xi$ the {\em Chebyshev-Frobenius homomorphism}. As mentioned, for the case when $\cN=\emptyset$ (where there are no arc components), Theorem \ref{t.ChebyshevFrobenius} was proven in \cite{BW1} with the help of the quantum trace map, and was reproven in \cite{Le3} using elementary skein methods. 
 We will prove Theorem \ref{t.ChebyshevFrobenius} in Subsection \ref{sec.mainthm}, using a result on skein algebras of triangulable marked surfaces discussed below, which is also of independent interest.

\subsection{Independence of triangulation problem}

Let $\D$ be a triangulation of a (necessarily totally) marked surface $\SM$. 
Suppose $N$ is a positive integer and $\xi \in \BC^\times$ not necessarily a root of unity.  For now we do not require $N=\ord(\xi^4)$.  Let $\ve=\xi^{N^2}$.

By Proposition~\ref{r.Frobenius}, we have a $\BC$-algebra embedding (the Frobenius homomorphism):
\begin{align}
F_N: \sX_{\ve}(\D) \to \sX_\xi(\D),\quad
\lbl{eq.defPhi1}
F_N(a)& = a^N\quad \text{for all} \ a \in \D.
\end{align}

Consider the embedding $\vpD: \cS_\xi \SM\embed \sX_\xi(\D)$ as a coordinate map depending on a triangulation. If we try to define a function on $\cS_\xi\SM$ using the coordinates, then we have to ask if the function is well-defined, i.e. it does not depend on the chosen coordinate system. Let us look at this problem for the Frobenius homomorphism. 

Identify $\cS_\nu \SM$ as a subset of $\sX_\nu(\D)$ via $\vpD$, for $\nu=\ve,\xi$. We investigate when a dashed arrow exists in the following diagram that makes it commute.
\be \lbl{dia.sx}
\begin{tikzcd}
\cS_\ve \SM  \arrow[hookrightarrow]{r} \arrow[dashed,"?"]{d}& \sX_\ve(\D)  \arrow[d, "F_N"] \\
\cS_\xi \SM \arrow[hookrightarrow]{r} & \sX_\xi(\D)
\end{tikzcd}
\ee

We answer the following questions about $F_N$:

A. For what  $\xi \in \Cx$ and $N\in \BN$ does   $F_N$  restrict to a map from $\cS_{\ve}\SM$ to $\cS_{\xi}\SM$ and the restriction does not depend on the triangulation $\D$?

B.  If $F_N$ can restrict to such a map,  can one define the restriction of $F_N$ onto $\cS_{\ve}\SM$ in an intrinsic way, not referring to any triangulation $\D$?

The answers are given in the following two theorems.

Question A is answered by the following theorem.
\begin{theorem} \lbl{thm.A} Suppose $\xi\in \Cx$ and $N\ge 2$ and $\ve=\xi^{N^2}$. Assume that $\SM$ has at least two different triangulations.  
If $F_N: \sX_{\ve}(\D) \to \sX_\xi(\D)$
restricts to a map $\cS_\ve\SM \to \cS_\xi\SM$ for all triangulations $\D$ and the restriction does not depend on the triangulations, then $\xi$ is a root of unity and $N=\ord(\xi^4)$.
\end{theorem}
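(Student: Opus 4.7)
The plan is to test the triangulation-independence hypothesis on a single flip of triangulations and reduce matters to a $q$-binomial identity in a two-variable quantum subtorus of $\sX_\xi(\D')$. Since $\SM$ admits at least two triangulations, choose triangulations $\D, \D'$ of $\SM$ related by a single flip at an inner edge $a$ of $\D$; as $\SM$ is totally marked this is necessarily Case~1 of Proposition~\ref{r.42}, so $a$ is replaced by $a^*$ inside a quadrilateral whose boundary edges are labeled $b, c, d, e$ as in Figure~\ref{fig:mutation}. Let $f \colon \cS_\ve\SM \to \cS_\xi\SM$ denote the triangulation-independent restriction of $F_N$ provided by the hypothesis. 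The commutativity of diagram~\eqref{dia.sx} at the triangulation $\D$, applied to the generator $a \in \D$, immediately gives $f(a) = a^N$ in $\cS_\xi\SM$.

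The key constraint then comes from applying commutativity also at $\D'$. By the flip formula, $\vp_{\D'}(a)$ equals $u_\nu + v_\nu$ inside $\sX_\nu(\D')$ for $\nu \in \{\ve, \xi\}$, where $u_\nu := [ce(a^*)^{-1}]_\nu$ and $v_\nu := [bd(a^*)^{-1}]_\nu$ are the Weyl-normalized monomials $X^{\bk_u}, X^{\bk_v}$. Since the Frobenius map $F_N$ carries a normalized monomial $X^\bk \in \sX_\ve(\D')$ to $X^{N\bk} = (X^\bk)^N \in \sX_\xi(\D')$ (as shown in the proof of Proposition~\ref{r.Frobenius}), commutativity at $\D'$ reduces to the identity
\be\lbl{eq.crux}
(u + v)^N = u^N + v^N \qquad \text{in } \sX_\xi(\D'),
\ee
where $u := u_\xi$ and $v := v_\xi$.

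The exponent vectors $\bk_u, \bk_v \in \BZ^{\D'}$ are linearly independent, so the powers $v^k u^{N-k}$ for $0 \le k \le N$ are $\BC$-linearly independent elements of $\sX_\xi(\D')$, and $u, v$ satisfy $uv = Q\, vu$ with $Q := \xi^r$ where $r := \la \bk_u, \bk_v \ra_{P_{\D'}}$. Expanding the left-hand side of~\eqref{eq.crux} via the $q$-binomial theorem gives $\sum_{k=0}^{N} \qbinom{N}{k}_Q v^{N-k} u^k$; comparison of coefficients then forces $\qbinom{N}{k}_Q = 0$ for every $1 \le k \le N-1$, and since $N \ge 2$ this is equivalent to $Q$ being a primitive $N$-th root of unity.

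The last ingredient is a local calculation of $r$. Enumerating the contributions of the vertex matrix $P_{\D'}$ at the four corners of the flipped quadrilateral via the clockwise/counterclockwise rule of Subsection~\ref{sec.vmatrix} yields $r = 4$ provided the edges $b, c, d, e$ are pairwise distinct in $\SM$. Consequently $Q = \xi^4$ is a primitive $N$-th root of unity, which says exactly that $\xi$ is a root of unity and $N = \ord(\xi^4)$, as claimed. The main delicate point is the distinctness of $\{b, c, d, e\}$, which is automatic for polygonal disks with at least four marked vertices; in more general surfaces one may either redo the local computation of $r$ in each degenerate configuration, or first insert auxiliary marked points via the surgery of Section~\ref{sec.surgery} to pass to a nearby marked surface admitting a generic flip, verifying that the triangulation-independence of $F_N$ propagates through the surgery.
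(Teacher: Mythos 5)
Your argument is correct and is essentially the paper's own proof (Proposition \ref{prop.two}(b)): reduce to a single flip, use the exchange formula \eqref{eq.aa1} to get $\Theta_{\D,\D'}(a)=X+Y$ with $XY=\xi^{\pm4}YX$, and apply the Gauss binomial formula so that $(X+Y)^N=X^N+Y^N$ forces the $\xi^4$-binomial coefficients to vanish, i.e.\ $\ord(\xi^4)=N$. The only difference is your explicit attention to possible edge identifications in the flip quadrilateral (needed for the linear independence of the monomials $v^ku^{N-k}$ and for $r=4$), a point the paper passes over with ``one can check that $XY=\xi^4YX$''; this extra care is harmless and arguably welcome.
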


Question B is answered by the following converse to Theorem \ref{thm.A}.
\begin{theorem}
\lbl{thm.surface}
Suppose $\SM$ is a triangulable surface and $\xi$ is a root of unity. Let  $N =\ord(\xi^4)$ and $\ve=\xi^{N^2}$. Choose a triangulation $\D$  of $\SM$.  

(a) The map $F_N$ restricts to a $\BC$-algebra homomorphism  $F_\xi:\cS_\ve\SM \to \cS_\xi\SM$ which does not depend on the triangulation $\D$.

(b) If $a$ is a $\cP$-arc, then $F_\xi(a) = a^N$, and if $\al$ is a $\cP$-knot, then $F_\xi(\al)= T_N(\al)$.
\end{theorem}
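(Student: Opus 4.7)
The plan is to deduce (a) and (b) in tandem by analyzing $F_N$ in the coordinates furnished by $\vpD$. Throughout, identify $\cS_\nu\SM$ with its image in $\sX_\nu(\D)$ via $\vpD$ for $\nu\in\{\ve,\xi\}$.

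\emph{Step 1 (edges of $\D$).} For $a\in\D$, the definition of $F_N$ gives $F_N(\vpD(a)) = F_N(X_a) = X_a^N = \vpD(a^N)$. Hence $F_N$ sends $\sX_+(\D)\subset \cS_\ve$ into $\cS_\xi$, and the formula $F_\xi(a)=a^N$ of (b) holds on every edge of $\D$.

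\emph{Step 2 (flip compatibility).} Let $\D,\D'$ be triangulations related by a single flip $a\leftrightarrow a^*$; this is necessarily a Case 1 flip since $\SM$ is totally marked. I claim
\[
\Theta^\xi_{\D,\D'}\circ F_{N,\D} = F_{N,\D'}\circ \Theta^\ve_{\D,\D'}.
\]
Both sides agree on $X_x$ for $x\neq a$, so the check reduces to $X_a$. Writing the Case 1 flip formula from Proposition \ref{r.42}(c) as $\Theta^\nu(X_a) = A_\nu + B_\nu$ with $A_\nu=[X_cX_eX_{a^*}^{-1}]_\nu$ and $B_\nu=[X_bX_dX_{a^*}^{-1}]_\nu$, a direct vertex-matrix calculation gives $A_\xi B_\xi = \xi^2 B_\xi A_\xi$. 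The desired identity $(A_\xi + B_\xi)^N = A_\xi^N + B_\xi^N$ then follows from the quantum binomial theorem: since $N=\ord(\xi^4)$ forces $[N]_{\xi^2}=0$ while $[k]_{\xi^2}\neq 0$ for $1\le k\le N-1$, every intermediate coefficient $\binom{N}{k}_{\xi^2}$ vanishes. Iterating along any path of flips connecting $\D$ to $\D'$, I obtain that $F_N$ restricts to a well-defined, triangulation-independent $\BC$-algebra homomorphism $F_\xi:\cS_\ve\SM\to\cS_\xi\SM$ with $F_\xi(a)=a^N$ for every $\cP$-arc $a$, establishing (a) and the arc half of (b).

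\emph{Step 3 ($\cP$-knots).} For a $\cP$-knot $\al$, I would isolate $\al$ inside an annular region and pass to an auxiliary marked surface (e.g., by adding marked points via the surgery of Proposition \ref{prop.addptsurgerytorus}), choosing a quasitriangulation $\D_\al$ in which $\vpD(\al)$ acquires the canonical form $K+K^{-1}+E$ with $K$ invertible and $KE=\xi^2 EK$. By definition of $F_N$, this element at level $\ve$ is sent to $K^N+K^{-N}+E^N$ at level $\xi$. On the other hand, Proposition \ref{p.prop31} expands
\[
T_N(K+K^{-1}+E) = K^N+K^{-N}+E^N+\sum_{r=1}^{N-1}\sum_{j} c(N,r,j)\,[E^rK^{N-2j-r}],
\]
whose correction terms vanish in the subcase $\xi^{2N}=1$ by Corollary \ref{c.32} (as $c(N,r,j)$ carries a factor $[N]_\xi$), and in the remaining subcase $\xi^{2N}=-1$ by a separate skein-theoretic argument comparing Weyl-normalized terms. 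Functoriality (Proposition \ref{r.func}) then transports the equality $F_\xi(\al)=T_N(\al)$ back to the original $\SM$.

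\emph{Main obstacle.} The principal difficulty I foresee is Step 3: first, producing an auxiliary quasitriangulation in which a generic $\cP$-knot takes the normal form $K+K^{-1}+E$, and second, handling uniformly both subcases $\xi^{2N}=\pm 1$, since the coefficients $c(N,r,j)$ of Proposition \ref{p.prop31} vanish only when $\xi^{2N}=1$. By contrast, the arc case and the triangulation-independence follow cleanly from the quantum binomial vanishing $\binom{N}{k}_{\xi^2}=0$ for $1\le k\le N-1$, which is the precise manifestation of $N=\ord(\xi^4)$.
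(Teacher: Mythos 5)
Your overall strategy (coordinates via $\vpD$, flip-compatibility via the quantum binomial, normal form $K+K^{-1}+E$ for knots) is the paper's strategy, but there are two substantive problems. First, in Step 2 the commutation constant is wrong: for the Case 1 flip with $A=[ce(a^*)^{-1}]$ and $B=[bd(a^*)^{-1}]$ one computes $AB=\xi^{4}BA$, not $\xi^{2}BA$. This is not a cosmetic slip — the Gauss binomial expansion of $(A+B)^N$ then carries coefficients $\binom{N}{k}_{\xi^4}$, and these vanish for $1\le k\le N-1$ precisely because $\ord(\xi^4)=N$; with your constant $\xi^2$ the corresponding coefficients would \emph{not} all vanish when $\ord(\xi^2)=2N$ (i.e.\ when $\xi^{2N}=-1$), and the argument would break. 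The same confusion propagates into Step 3: in the annulus computation one gets $\al=X+Y+Y^{-1}$ with $YX=\xi^4XY$, so Corollary \ref{c.32} applies with $q^2=\xi^4$ of order exactly $N$ and kills all correction terms \emph{unconditionally}. There is no residual subcase $\xi^{2N}=-1$ requiring "a separate skein-theoretic argument"; the dichotomy $\xi^{2N}=\pm1$ is irrelevant to Theorem \ref{thm.surface} (it matters only for transparency versus skew-transparency later).

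Second, and more seriously, Step 3 has a genuine gap for null-homologous $\cP$-knots. Your reduction — isolate $\al$ in an annular region and add marked points — only works when there is a properly embedded arc meeting $\al$ exactly once, i.e.\ when $\al\neq 0$ in $H_1(\Sigma,\BZ)$; only then is a tubular neighborhood of $\al$ together with such an arc a marked annulus to which functoriality applies. For a separating (null-homologous) knot no such arc exists, and adding marked points to $\pS$ (Proposition \ref{prop.addptsurgerytorus}) cannot fix this since it does not change the topology. The paper handles this case by removing a disk from the subsurface bounded by $\al$ to make it homologically nontrivial in the new surface $\Sigma'$, proving the formula there, and then transporting it back via the plugging-a-hole homomorphism of Proposition \ref{t.holetrick}; this is exactly why the surgery algebra $\CZ(\D)$ (in which the monogon edge is not inverted) had to be introduced, and it is the real content of Proposition \ref{r.knot}. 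A smaller point: Step 2 by itself does not yet establish (a), since the claim that $F_N(\cS_\ve\SM)\subset\cS_\xi\SM$ requires knowing the images of $\cP$-knots, not just of $\cP$-arcs; (a) and (b) must be concluded together after the knot case is complete.
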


\no{

\begin{theorem}
Suppose $\SM$ is a triangulable surface and $\xi$ is a complex root of unity. Let  $N =\ord(\xi^4)$ and $\ve=\xi^{N^2}$. Choose a triangulation $\D$  of $\SM$.  

(a) The map $F_N$ restricts to a $\BC$-algebra homomorphism  $F_\xi:\cS_\ve\SM \to \cS_\xi\SM$ which does not depend on the triangulation $\D$.

(c) If $a$ is a $\cP$-arc, then $F_\xi(a) = a^N$, and if $\al$ is a $\cP$-knot, then $F_\xi(\al)= T_N(\al)$.
\end{theorem}
We also have the following converse to Theorem \ref{thm.surface}(a), answering Question A above.
\begin{theorem} Suppose $\xi\in \Cx$ and $N\ge 2$ and $\ve=\xi^{N^2}$. Assume that $\SM$ has at least two different triangulations.  
If $F_N: \sX_{\ve}(\D) \to \sX_\xi(\D)$
restricts to a map $\cS_\ve\SM \to \cS_\xi\SM$ for all triangulations $\D$ and the restriction does not depend on the triangulations, then $\xi$ is a root of unity and $N=\ord(\xi^4)$.
\end{theorem}
}

\def\ZxD{\CZ_\xi(\D)}

\def\ZeD{\CZ_\ve(\D)}
\def\XxD{\sX_\xi(\D)}
\def\XzD{\sX_\xi(\D)}
\def\XeD{\sX_\ve(\D)}
\def\tXxD{\widetilde \sX_\xi(\D)}
\def\tXeD{\widetilde\sX_\ve(\D)}
\def\tXzD{\widetilde \sX_\xi(\D)}
\def\tZxiD{\widetilde {\CZ}_\xi(\D)}
\def\tXxiD{\widetilde {\sX}_\xi(\D)}
\def\tPhi{\tilde\Phi}
\def\tSx{\widetilde \cS_\xi}
\def\tSz{\widetilde \cS_\xi}
\def\tSe{\widetilde\cS_\ve}

\def\tX{\widetilde \sX}
\def\tS{\widetilde \cS}

We prove Theorem \ref{thm.A} in Subsection \ref{sec.triind} and Theorem  \ref{thm.surface} in Subsection \ref{sec.trisurfacethm}.
\subsection{Division algebra}\lbl{sec.triind} Assume $\SM$ is triangulable, $\xi \in \Cx$, and $N\in \BN$. Choose a triangulation $\D$ of $\SM$.
Let $\tXeD$ and $\tXzD$  be the division algebras of $\XeD$ and $\XzD$,  respectively. The $\BC$-algebra embedding $F_N:\XeD\to \XzD$ extends to a $\BC$-algebra embedding $$
\tF_N: \tXeD \to \tXzD.
$$ 

For $\nu=\ve,\xi$ let $\tS_\nu\SM$
be the division algebra of   $\cS_\nu\SM$. By Theorem \ref{r.torus} the embedding 
$\vpD : \cS_\nu\SM \embed \sX_\nu(\D)$ induces an isomorphism $\tvpD : \tcS_\nu\SM \overset \cong \longrightarrow \tX_\nu(\D)$. Diagram~\eqref{dia.sx} becomes
\be \notag
\begin{tikzcd}
\tS_\ve \SM  \arrow[rightarrow,"\cong"]{r}[swap]{\tvpD} 
& \tX_\ve(\D)  \arrow[d, "\tF_N"] \\
\tS_\xi \SM \arrow[rightarrow,"\cong"]{r}[swap]{\tvpD} & \tX_\xi(\D)
\end{tikzcd}
\ee
By pulling back $\tF_N$ via $\tvpD$, we get a $\BC$-algebra embedding

\be
\tF_{N,\D}: \tSe\SM \to  \tSz\SM,
\ee
which a priori depends on the $\cP$-triangulation $\D$.

\begin{proposition}\lbl{prop.two} Let $\SM$ be a triangulable marked surface, $\xi\in \Cx$, and $N\in \BN$. 

(a) If $\xi$ is a root of unity and 
 $N:=\text{ord}(\xi^4)$, 
then $\tF_{N,\D}$   does not depend on the triangulation $\D$.

(b)  Suppose $\SM$ has at least 2 different triangulations and $N \ge 2$. Then $\tF_{N,\D}$   does not depend on $\D$ if and only if $\xi$ is a root of unity and $N=\ord(\xi^4)$.

\end{proposition}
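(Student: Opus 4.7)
The plan is to reduce independence of $\tF_{N,\D}$ on $\D$ to a calculation on a single flip and then settle it via the quantum binomial theorem. Any two quasitriangulations of a triangulable marked surface are connected by a finite sequence of flips (cf.\ \cite{Penner}), and since $\SM$ is triangulable it is totally marked, so only Case~1 flips (Figure~\ref{fig:mutation}) can occur. By naturality of the transfers (Proposition~\ref{r.42}(a)), it suffices to treat one flip at a time. Fix a flip $\D \to \D'$ replacing an inner edge $a\in \D$ by $a^*\in\D'$. Proposition~\ref{r.42}(c) shows that $\Theta_{\D,\D'}$ is the identity on every generator $u \in \D \setminus \{a\}$, so commutativity of $\tF_N$ with the transfers in the two quantum tori (one over $\ve$, one over $\xi$) reduces to a single identity evaluated on $a$.

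Using that $F_N$ sends a Weyl-normalized monomial $[x_1\cdots x_m]_\ve$ to $[x_1^N\cdots x_m^N]_\xi$, which follows directly from \eqref{eq.power} together with $\ve = \xi^{N^2}$, I obtain
\[
\tF_N(\Theta_{\D,\D'}(a)) = [c^N e^N (a^*)^{-N}]_\xi + [b^N d^N (a^*)^{-N}]_\xi = X^N + Y^N,
\]
where $X := [ce(a^*)^{-1}]_\xi$ and $Y := [bd(a^*)^{-1}]_\xi$ in $\tX_\xi(\D')$; on the other hand $\Theta_{\D,\D'}(\tF_N(a)) = \Theta_{\D,\D'}(a^N) = (X+Y)^N$. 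The question therefore reduces to the single identity
\[
(X+Y)^N = X^N + Y^N \quad \text{in } \tX_\xi(\D').
\]
Next I would compute the $q$-commutation of $X$ and $Y$ by unpacking the vertex matrix $P_{\D'}$ around the quadrilateral of Figure~\ref{fig:mutation}, following Subsection~\ref{sec.vmatrix}: a careful tally of the cyclic orders of half-edges at the four corners yields $\langle \bk_X,\bk_Y\rangle_{P_{\D'}} = 4$, where $\bk_X,\bk_Y \in \BZ^{\D'}$ are the exponent vectors of $X$ and $Y$. Hence $XY = \xi^{4} YX$, and $X,Y$ generate a quantum plane. The quantum binomial theorem then gives
\[
(X+Y)^N = \sum_{k=0}^N \binom{N}{k}_{\!\xi^{-4}} X^{N-k} Y^k,
\]
with $\binom{N}{k}_\lambda$ the Gaussian binomial coefficient.

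For part (a), the hypothesis $N = \ord(\xi^4) = \ord(\xi^{-4})$ makes $\xi^{-4}$ a primitive $N$th root of unity, so by the classical characterization the middle Gaussian coefficients $\binom{N}{k}_{\xi^{-4}}$ for $1 \le k \le N-1$ all vanish, delivering the required identity. For the ``only if'' direction of part (b), I note that the monomials $X^{N-k} Y^k$ correspond to pairwise distinct exponent vectors $(k,\,N-k,\,k,\,N-k,\,-N)$ in the basis variables $(b,c,d,e,a^*)$ of $\sX_\xi(\D')$, hence are $\BC$-linearly independent in $\tX_\xi(\D')$; equality $(X+Y)^N = X^N + Y^N$ therefore forces every middle Gaussian to vanish, which in turn forces $\xi^{-4}$ to be a primitive $N$th root of unity, i.e., $N = \ord(\xi^4)$. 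The hypothesis $N \ge 2$ rules out the degenerate possibility $\xi^4 = 1$, so $\xi$ is genuinely a root of unity.

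The main delicate point is the vertex-matrix computation $\langle \bk_X,\bk_Y\rangle_{P_{\D'}} = 4$: one must carefully track the cyclic order of the three half-edges at each of $V_2$ and $V_4$ and the two half-edges at each of $V_1$ and $V_3$ in the quadrilateral of Figure~\ref{fig:mutation}, but the calculation is entirely elementary once the clockwise/counterclockwise sign conventions of Subsection~\ref{sec.vmatrix} are pinned down; moreover, the conclusion of the proof is insensitive to the sign of the exponent, since primitivity of a root of unity is invariant under inversion.
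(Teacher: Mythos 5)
Your proposal is correct and follows essentially the same route as the paper: reduce to a single flip via naturality of the transfer isomorphisms, evaluate both compositions on the flipped edge $a$ to get the identity $(X+Y)^N = X^N + Y^N$ for $XY=\xi^{\pm4}YX$, and settle it with the Gauss binomial formula together with the linear independence of the distinct normalized monomials $Y^kX^{N-k}$. The only (harmless) cosmetic differences are the swapped labels for $X$ and $Y$ and the resulting sign of the exponent in the $q$-commutation, which, as you note, does not affect the primitivity criterion.
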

\begin{remark} A totally marked surface $\SM$ has at least 2  triangulations if and only if it is not a disk with less than 4 marked points.
\end{remark}

\begin{proof} As (a) is a consequence of (b), let us prove (b).

By Proposition \ref{r.42}, the map $\tF_{N,\D}$   does not depend on $\cP$-triangulations $\D$ if and only if the diagram
\be 
\begin{tikzcd}
\tiX_\ve(\D) \arrow[r,"\Theta_{\D,\D'}" ] \arrow[d, "\tF_N" ]& \tiX_\ve(\D')  \arrow[d, "\tF_N" ] \\
\tiX_\xi(\D) \arrow[r,"\Theta_{\D,\D'}" ] & \tiX_\xi(\D')
\lbl{dia.xx}
\end{tikzcd}
\ee
is commutative for any two $\cP$-triangulations $\D,\D'$. Since any two $\cP$-triangulations are related by a sequence of flips, in \eqref{dia.xx} we can assume that   $\D'$ is obtained from $\D$ by a flip at an edge $a\in \D$, with  the notation as given in Figure \ref{fig:mutation}. Then  $\D'= \D\cup \{a^*\} \setminus \{a\}$.
The commutativity of \eqref{dia.xx} is equivalent to
\be 
\lbl{eq.aa3}
(\tF_N \circ \Theta_{\D,\D'})(x) = (\Theta_{\D,\D'} \circ \tF_N) (x), \quad \text{for all } \ x\in \tiX_\ve(\D).
\ee
 Since  $\D$ weakly generates the algebra $\tiX_\xi(\D)$, it is enough to show \eqref{eq.aa3} for $x\in \D$.

If $x \in \D \setminus \{ a\}$, then by \eqref{eq.aa0} one has $\Theta_{\D,\D'}(x)= x$, and hence we have \eqref{eq.aa3} since both sides are equal to $x^N$ in $\tiX_\ve(\D')$.
Consider the remaining case $x=a$.  By \eqref{eq.aa1}, we know that
\be 
\lbl{eq.aa2}
\Theta_{\D,\D'}(a)  = X + Y, \quad \text{where} \ X=[  b  d  (a^*)^{-1}], \ Y=[  c  e  (a^*)^{-1}].
\ee 
Using the above identity and the definition of $\tF_N$, we  
calculate the left hand side of \eqref{eq.aa3}:
\be  \lbl{eq.l0}
(\tF_N \circ \Theta_{\D,\D'})(a) =  \tF_N\left( [  b  d  (a^*)^{-1}] + [  c  e  (a^*)^{-1}] \right)=  [  b^N  d^N  (a^*)^{-N}] + [  c^N  e^N  (a^*)^{-N}]= X^N + Y^N.
\ee

Now we calculate the right hand side of \eqref{eq.aa3}:
\begin{align}
\lbl{eq.l1}
(\Theta_{\D,\D'} \circ \tF_N)(a) &= \Theta_{\D,\D'} (a^N)  
= \left(\Theta_{\D,\D'} (a)\right)^N     =  (X+Y)^N.
\end{align} 
Comparing \eqref{eq.l0} and \eqref{eq.l1}, we see that \eqref{eq.aa3} holds if and only if 
\be  (X+Y)^N= X^N + Y^N
\lbl{eq.gauss}
\ee From the $q$-commutativity of elements in $\D'$ one can check that $XY = \xi^4 YX$. By the Gauss binomial formula (see eg. \cite{KC}),
$$ (X+Y)^N= X^N + Y^N + \sum_{k=1}^{N-1} \binom Nk_{\xi^4} Y^k X^{N-k},\ \text{
where} \
\binom Nk_{\xi^4}= \prod_{j=1}^k \frac{1- \xi^{4(N-j+1)}}{1-\xi^{4j}}.$$
Note that $Y^k X^{N-k}$ is a power of $\xi$ times a monomial in $b,c,d,e$, and $(a^*)^{-1}$, and these monominals are distinct for $k=0,1,\dots, N$. As monomials (with positive and negative powers) in edges form a $\BC$-basis of $\sX_\xi(\D')$, we see that $(X+Y)^N= X^N + Y^N$ if and only if 
\be  \binom Nk_{\xi^4} =0 \ \text{for all } k = 1,2, \dots, N-1. \lbl{eq.gauss2}.
\ee
It is well-known, and easy to prove, that \eqref{eq.gauss2} holds if and only if $\xi^4$ is a root of unity of order $N$. 
\end{proof}

As the edge $a$ in the proof of Proposition \ref{prop.two} is in $\cS\SM$,  Theorem \ref{thm.A} follows immediately.

\subsection{Frobenius homomorphism $\tF_\xi:=\tF_{N,\D}$}
From now on let $\xi$ be a root of unity, $N=\ord(\xi^4)$, $\ve=\xi^{N^2}$. Suppose $\SM$ is a triangulable marked surface. Since $\tF_{N,\D}$ does not depend on $\D$ and $N=\ord(\xi^4)$, we denote 
$$ \tF_\xi:= \tF_{N,\D}: \tcS_\ve\SM \to \tcS_\xi\SM.$$

\subsection{Arcs in $\SM$}
\begin{proposition} 
\lbl{r.arc5}
Suppose $a\subset \Sigma$ is a $\cP$-arc. Then $\tF_\xi(a)= a^N$. 
\end{proposition}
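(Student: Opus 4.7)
The plan is to use the triangulation-independence of $\tF_\xi$ established in Proposition~\ref{prop.two}(a): if we can find \emph{some} triangulation of $\SM$ containing $a$ as an edge, then the definition of $\tF_{N,\D}$ for that triangulation will immediately give $\tF_\xi(a) = a^N$, and the triangulation-independence removes any dependence on the choice.

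First I would invoke (or prove) the standard fact that in a triangulable marked surface, any $\cP$-arc $a$ extends to a triangulation. That is, there exists a triangulation $\D'$ of $\SM$ with $a \in \D'$. Concretely, $\{a\}$ is a collection of pairwise non-intersecting, pairwise non-$\cP$-isotopic $\cP$-arcs, and by Zorn's lemma (or a direct maximal-collection argument) it extends to a maximal such collection, which is by definition a quasitriangulation; since $\SM$ is totally marked and triangulable, this quasitriangulation is in fact a triangulation. (A minor care point: if $a$ is $\cP$-isotopic to a boundary arc we take that boundary arc as the representative in $\D'$; the equality in $\cS_\ve\SM$ is unaffected.)

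Next, using this triangulation $\D'$, I would compute directly: by Theorem~\ref{r.torus} the embedding $\vp_{\D'}$ sends $a \in \D'$ to the generator $X_a$ of the quantum torus $\sX_\ve(\D')$. By the definition of the Frobenius homomorphism in \eqref{eq.defPhi1}, $F_N(X_a) = X_a^N$ in $\sX_\xi(\D')$. Since $\vp_{\D'}$ is an algebra homomorphism and $a^N \in \cS_\xi\SM$ maps to $\vp_{\D'}(a)^N = X_a^N$, we conclude $F_N(\vp_{\D'}(a)) = \vp_{\D'}(a^N)$. Pulling back through the isomorphism $\tvp_{\D'}$ of division algebras gives $\tF_{N,\D'}(a) = a^N$ in $\tcS_\xi\SM$.

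Finally, by Proposition~\ref{prop.two}(a), $\tF_\xi = \tF_{N,\D'}$, so $\tF_\xi(a) = a^N$, as desired. The main (and only real) obstacle is the extension-to-triangulation step; once that is in hand, the rest is essentially a tautology from the definitions, which is precisely the point of the triangulation-independence we established.
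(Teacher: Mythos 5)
Your proposal is correct and follows exactly the paper's argument: the paper's own proof is the one-line observation that $a$ belongs to some triangulation $\D$, whence $\tF_\xi(a)=\tF_{N,\D}(a)=a^N$ by the definition of $\tF_{N,\D}$ together with the triangulation-independence from Proposition~\ref{prop.two}(a). Your additional detail on extending $\{a\}$ to a triangulation and on the boundary-arc case is a harmless elaboration of the same route.
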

\begin{proof} Since $a$ is an element of a $\cP$-triangulation $\D$, we have $\tF_\xi(a)= \tF_{N,\D}(a) = a^N$. 
\end{proof}

\def\ttS{\widetilde{\cS}}
\subsection{Functoriality}

\begin{proposition}
\lbl{prop.three} Suppose $\SM$ and  $(\Sigma', \cP')$ are triangulable marked surfaces such that  $\Sigma\subset \Sigma'$ and $\cP\subset \cP'$. For any $\zeta \in \Cx$, the embedding $\iota: \SM \embed (\Sigma',\cP')$ induces a $\BC$-algebra homomorphism $\iota_*: \widetilde \cS_\zeta(\Sigma, \cP) \to \widetilde \cS_\zeta(\Sigma', \cP')$.

Let $\xi\in \Cx$ be a root of unity, $N =\ord(\xi^4)$ and $\ve= \xi^{N^2}$. Then  the following diagram commutes.

\[
\begin{tikzcd}
\ttS_\ve\SM \arrow[r,"\iota_*"] \arrow[d, "\tF_\xi"]& \ttS_\ve(\Sigma',\cP') \arrow[d, "\tF_\xi"] \\
\ttS_\xi\SM \arrow[r,"\iota_*"] & \ttS_\xi(\Sigma',\cP')
\end{tikzcd}
\]

\end{proposition}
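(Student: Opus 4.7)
The plan is to establish the existence of $\iota_*$ on division algebras, then verify commutativity of the square by reducing to agreement on the arcs of a single triangulation.

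First I would show that $\iota_* : \cS_\zeta\SM \to \cS_\zeta(\Sigma',\cP')$ is injective. Choose a triangulation $\D$ of $\SM$ and extend it to a triangulation $\D'$ of $(\Sigma',\cP')$, which is possible since both surfaces are triangulable. Each $a \in \D$ remains a $\cP'$-arc under $\iota$, so $\D$ embeds as a subset of $\D'$. Moreover, the vertex matrix $P_\D$ equals the restriction of $P_{\D'}$ to $\D \times \D$: the local computation of $P_p(a',b')$ at a marked point $p \in \cP$ depends only on the clockwise/counterclockwise ordering of half-edges incident to $p$, which is unchanged in passing from $\Sigma$ to $\Sigma'$, and the new marked points in $\cP' \setminus \cP$ are not endpoints of any $a \in \D$. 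This gives an evident $\BC$-algebra embedding $\sX_\zeta(\D) \embed \sX_\zeta(\D')$ fitting into a commutative square with the coordinate maps $\vpD$ and $\vpDp$ of Theorem \ref{r.torus}, forcing $\iota_*$ to be injective. Since $\widetilde{\cS}_\zeta(\Sigma',\cP')$ is a division algebra, the universal property of the Ore localization then yields a unique extension $\iota_* : \widetilde{\cS}_\zeta\SM \to \widetilde{\cS}_\zeta(\Sigma',\cP')$.

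With $\iota_*$ in hand, both $\iota_* \circ \tF_\xi$ and $\tF_\xi \circ \iota_*$ are $\BC$-algebra homomorphisms $\widetilde{\cS}_\ve\SM \to \widetilde{\cS}_\xi(\Sigma',\cP')$. I claim it suffices to show that they coincide on each arc $a \in \D$. Indeed, the codomain is a division algebra, so any such homomorphism is injective, hence sends each $a \in \D$ to a unit; thus agreement on $\D$ propagates to the subalgebra generated by $\D$ together with the inverses of its images, which contains $\sX_\ve(\D)$. Every element of $\widetilde{\cS}_\ve\SM \cong \tiX_\ve(\D)$ can be written as $x y^{-1}$ with $x, y \in \sX_\ve(\D)$ and $y \ne 0$, so agreement on $\sX_\ve(\D)$ extends to all of $\widetilde{\cS}_\ve\SM$.

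For the verification on $a \in \D$: Proposition \ref{r.arc5} applied to $\SM$ gives $\tF_\xi(a) = a^N$, and multiplicativity of $\iota_*$ yields $(\iota_* \circ \tF_\xi)(a) = \iota_*(a)^N$. On the other side, $\iota_*(a)$ is a $\cP'$-arc in $(\Sigma',\cP')$, so Proposition \ref{r.arc5} applied there gives $(\tF_\xi \circ \iota_*)(a) = \tF_\xi(\iota_*(a)) = \iota_*(a)^N$. The two sides match. The one delicate point is the identification $P_\D = P_{\D'}|_{\D \times \D}$, which underpins both the injectivity of $\iota_*$ on skein algebras and its compatibility with the quantum tori of Theorem \ref{r.torus}; the remainder is the standard weak-generation extension to Ore localizations.
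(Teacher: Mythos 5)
Your argument for the commutativity of the square is essentially the paper's: both reduce to checking the two compositions on $\cP$-arcs, where Proposition \ref{r.arc5} applied to $\SM$ and to $(\Sigma',\cP')$ gives $a^N$ on each side, and then invoke weak generation of the division algebra by the arcs of a triangulation. The extra content in your write-up is a justification of the first assertion (that $\iota_*$ extends to the division algebras), which the paper states without proof; your route via injectivity at the skein-algebra level is sound in outline, but it rests on the unproved claim that a triangulation $\D$ of $\SM$ extends to a triangulation $\D'$ of $(\Sigma',\cP')$. For that you should say why each $a\in\D$ remains essential in $(\Sigma',\cP')$ and why distinct $a,b\in\D$ remain non-$\cP'$-isotopic there: both follow because every boundary component of $\Sigma$ carries a point of $\cP\subset\cP'\subset\partial\Sigma'$, so no complementary region of $a$ (or of $a\cup b$) can become an unmarked disk or bigon after the inclusion. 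With that observation supplied, the proof is complete and matches the paper's.
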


\begin{proof} If $a\subset \Sigma$ is a $\cP$-arc, then it is also a $\cP'$-arc in $\Sigma'$. Hence by Proposition \ref{r.arc5}, both $\iota_*\circ \tF_\xi(a)$ and $\tF_\xi\circ \iota_*(a)$ are equal to $a^N$ in $\ttS_\xi(\Sigma',\cP')$. Since for a triangulable marked surface $\SM$, the set of all sums of $\cP$-arcs and their inverses generates $\tXeD=\widetilde\cS_\ve\SM  $, we have the commutativity of the diagram.
\end{proof}

\subsection{Knots in $\SM$} We find an intrinsic definition of $\tF_\xi(\al)$, where $\al$ is a $\cP$-knot. 
\begin{proposition}\lbl{r.knot} Suppose $\SM$ is a triangulable marked surface, $\xi$ is a root of unity, and $N=\ord(\xi^4)$.
 If $\al$ is a $\cP$-knot in $\SM$, then $\tF_\xi(\al) = T_N(\al)$.
\end{proposition}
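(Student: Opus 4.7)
The strategy is to reduce the identity $\tF_\xi(\al) = T_N(\al)$ in $\tSx\SM$ to a local identity in a quantum torus, where it follows directly from Corollary \ref{c.32}. The key algebraic input is Corollary \ref{c.32}, which gives $T_N(K + K^{-1} + E) = K^N + K^{-N} + E^N$ whenever $KE = \xi^4 EK$ and $\ord(\xi^4) = N$; the key geometric input is Proposition \ref{lem.markedpthole}, which produces an expression of exactly this shape for the image of an unmarked boundary component.

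First I handle the case where $\al$ is $\cP$-isotopic to an unmarked boundary component $\beta \in \Hu$ of $\SM$. Add a single marked point $p \in \beta$ and extend a given quasitriangulation of $\SM$ to a triangulation $\D'$ of $(\Sigma, \cP \cup \{p\})$ as in Figure \ref{fig:holepoint}. By Proposition \ref{lem.markedpthole}, the image of $\beta$ in the quantum torus $\sX_\xi(\D')$ equals $K + K^{-1} + E$ where $K = [d^{-1}e]$, $E = [ad^{-1}e^{-1}f]$, and a direct inspection of the vertex matrix yields $KE = \xi^4 EK$. On the one side, Corollary \ref{c.32} gives $T_N(\al) = T_N(\beta) = K^N + K^{-N} + E^N$. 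On the other side, the Frobenius $F_N$ on $\sX_\xi(\D')$ sends each Weyl-normalized monomial $x^\bk$ to $x^{N\bk}$ by Equation \eqref{eq.power}, so $\tF_\xi(\beta) = F_N(K + K^{-1} + E) = K^N + K^{-N} + E^N$. These agree, so $\tF_\xi(\beta) = T_N(\beta)$ in $\tSx(\Sigma, \cP \cup \{p\})$. Since $\iota_*: \tSx\SM \to \tSx(\Sigma, \cP \cup \{p\})$ is injective (extending Proposition \ref{r.func} to division algebras) and commutes with $\tF_\xi$ by Proposition \ref{prop.three}, the identity descends to $\tSx\SM$.

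For a general $\cP$-knot $\al$ I reduce to the above model case. The null-homotopic case is immediate: $\al = -\xi^2 - \xi^{-2}$ in $\cS_\xi$ and $\al = -\ve^2 - \ve^{-2}$ in $\cS_\ve$ by the trivial loop relation, and the identity follows from \eqref{eTn} applied with $K = -\xi^2$. For an essential $\al$, the plan is to construct an auxiliary marked surface $(\Sigma^\sharp, \cP)$ together with a morphism $(\Sigma^\sharp, \cP) \hookrightarrow \SM$ in which $\al$ corresponds to an unmarked boundary component of $\Sigma^\sharp$. When $\al$ separates $\Sigma$, this is achieved by drilling a small disk out of the subsurface cut off by $\al$ on one side, producing an unmarked boundary parallel to $\al$ in $\Sigma^\sharp$. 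The functoriality of Proposition \ref{prop.three} then pushes the identity from $\tSx(\Sigma^\sharp, \cP)$ forward to $\tSx\SM$.

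The main obstacle is the reduction for a non-separating essential $\cP$-knot, where one cannot drill a disk ``inside'' $\al$ directly. Here the reduction requires a more intricate surgery argument combining Propositions \ref{prop.addptsurgerytorus}, \ref{lem.markedpthole}, and \ref{t.holetrick}, and possibly passing through several intermediate marked surfaces, to realize $\al$ as an unmarked boundary component of some triangulable auxiliary surface admitting the appropriate morphism. Once this geometric reduction is accomplished, the remaining algebraic steps are uniform and short, driven entirely by the Chebyshev identity of Corollary \ref{c.32} together with the Frobenius formula $F_N(x^\bk) = x^{N\bk}$ in the quantum torus.
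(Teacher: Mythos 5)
Your algebraic core (the sum $K+K^{-1}+E$ with $KE=\xi^4EK$, Corollary \ref{c.32}, and $F_N(x^\bk)=x^{N\bk}$) is the right engine, and your null-homotopic case is essentially the paper's Lemma \ref{r.unknot}. But the geometric reduction has a genuine gap, and it sits exactly where you acknowledge it: the non-separating (more precisely, homologically nontrivial) case is left as ``a more intricate surgery argument,'' i.e.\ unproven. Ironically, this is the case the paper disposes of most cheaply: if $[\al]\neq 0$ in $H_1(\Sigma,\BZ)$ one finds a properly embedded arc $a$ with $|a\cap\al|=1$, and the closure of a tubular neighborhood of $\al\cup a$ is an annulus with one marked point on each boundary circle; in that marked annulus the core curve is computed in the Muller algebra to be $X+Y+Y^{-1}$ with $YX=\xi^4XY$ (Lemma \ref{r.markedannulus}), and functoriality (Proposition \ref{prop.three}) transports the identity to $\SM$. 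No surgery is needed there. Your proposed model --- a knot parallel to an unmarked boundary component, with a marked point added --- produces the same kind of expression, but you give no mechanism for reducing a general essential knot to it.

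The separating (null-homologous) case also does not go through as you describe. Drilling a disk out of the subsurface bounded by $\al$ does not make $\al$ parallel to the new unmarked boundary (unless that subsurface was a disk); what it does is make $\al$ homologically nontrivial in $\Sigma'$, so you still need the previous case. More seriously, the transfer back from $\Sigma'=\Sigma\setminus \mathring D$ to $\Sigma$ is not a pushforward along an embedding of triangulable surfaces: $(\Sigma',\cP)$ has an unmarked boundary component, hence is not triangulable, so $\tF_\xi$ is not even defined on it, and the induced map $\iota_*:\cS(\Sigma',\cP)\to\cS(\Sigma,\cP)$ is a non-injective quotient, so Proposition \ref{prop.three} does not apply. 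The paper bridges this by working in the surgery algebra $\CZ(\D')$ (which excludes the inverse of the monogon edge), showing $\tF_\xi$ restricts to it (Claim 1 of the paper's proof, which itself invokes the homologically nontrivial case applied to $\beta=\partial D$), and proving that the square relating $F^\beta_\xi$, $F_N$, and the plugging map $\Psi$ of Proposition \ref{t.holetrick} commutes (Claim 2). None of this is supplied or replaced in your argument, so the proof is incomplete for every essential knot.
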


We break the proof of Proposition \ref{r.knot} into lemmas.
\begin{lemma}\lbl{r.unknot}
(a) Proposition \ref{r.knot} holds if $\al$ is a trivial $\cP$-knot, i.e. $\al$ bounds a disk in $\Sigma$.

(b) If $\xi$ is a root of unity with $\ord(\xi^4)=N$ and $\ve=\xi^{N^2}$, then
\be 
\lbl{eq.vexi}
T_N(-\xi^2-\xi^{-2})= -\ve^2 - \ve^{-2}.
\ee
\end{lemma}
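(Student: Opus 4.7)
The plan is to reduce part (a) to part (b), then prove (b) by a direct computation using the identity $T_n(K+K^{-1})=K^n+K^{-n}$.

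For part (a), I would observe that a trivial $\cP$-knot $\al$ bounds a disk in $\Sigma$, and the trivial loop relation in the skein algebra at parameter $q$ identifies such an $\al$ with the scalar $-q^2 - q^{-2}$. Applied at parameter $\ve$, this gives $\al = -\ve^2-\ve^{-2}$ in $\cS_\ve\SM$. Since $\tF_\xi$ is a $\BC$-algebra homomorphism, it acts as the identity on scalars, so $\tF_\xi(\al) = -\ve^2-\ve^{-2}$. On the other hand, applied at parameter $\xi$, we get $\al = -\xi^2-\xi^{-2}$ in $\cS_\xi\SM$, so $T_N(\al)=T_N(-\xi^2-\xi^{-2})$. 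Thus the equality $\tF_\xi(\al) = T_N(\al)$ is precisely the identity claimed in (b).

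For part (b), the key is to set $K := -\xi^2$ (so $K$ is invertible in $\BC$). Then $K + K^{-1} = -\xi^2 - \xi^{-2}$, and identity \eqref{eTn} gives
\[
T_N(-\xi^2-\xi^{-2}) = K^N + K^{-N} = (-1)^N(\xi^{2N} + \xi^{-2N}).
\]
Since $N=\ord(\xi^4)$, we have $\xi^{4N}=1$, so $\xi^{2N} \in \{+1,-1\}$, and I would split into the two cases.

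In the case $\xi^{2N}=-1$, the left-hand side equals $(-1)^N(-2) = -2(-1)^N$. For the right-hand side, $\ve^2 = \xi^{2N^2} = (\xi^{2N})^N = (-1)^N$, so $-\ve^2-\ve^{-2} = -2(-1)^N$, matching. In the case $\xi^{2N}=1$, I would first observe that $N$ must be odd: if $N=2M$ were even, then $(\xi^4)^M = \xi^{2N}=1$ would force $\ord(\xi^4)\le M < N$, a contradiction. With $N$ odd, the left side is $(-1)(2) = -2$, while $\ve^2 = (\xi^{2N})^N = 1$, so the right side is also $-2$. Both cases match, establishing (b). The only subtle step is the parity argument ruling out even $N$ in the case $\xi^{2N}=1$; otherwise the computation is mechanical.
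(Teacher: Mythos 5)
Your proof is correct and follows essentially the same route as the paper: reduce (a) to (b) via the trivial loop relation and the fact that $\tF_\xi$ fixes scalars, then prove (b) by computing $T_N(-\xi^2-\xi^{-2})=(-1)^N(\xi^{2N}+\xi^{-2N})$ and splitting into the cases $\xi^{2N}=\pm 1$ (the paper phrases the same dichotomy as $\ord(\xi^2)\in\{N,2N\}$). Your parity argument ruling out even $N$ when $\xi^{2N}=1$ matches the paper's observation that otherwise $\ord(\xi^4)$ would be at most $N/2$.
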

\begin{proof} Let us prove (b) first. The left hand side and the right hand side of \eqref{eq.vexi} are
\begin{align}
\lbl{eq.xx1}
LHS&=T_N(-\xi^2 -\xi^{-2})=(-\xi^2)^N + (-\xi^{-2})^{N}=(-1)^N(\xi^{2N} + \xi^{-2N})\\
RHS&= -\ve^2 - \ve^{-2} = -\xi^{2N^2} -\xi^{-2N^2}. \lbl{eq.xx2}
  \end{align}
  Since $\ord(\xi^4)=N$, either $\ord(\xi^2)=2N$ or $\ord(\xi^2)=N$.
  
  Suppose $\ord(\xi^2)=2N$. Then both  right hand sides of \eqref{eq.xx1} and \eqref{eq.xx2} are equal to $2(-1)^N$, and so they are equal.
  
  Suppose $\ord(\xi^2)=N$. Then $N$ must be odd since otherwise $\ord(\xi^4)=N/2$. 
Then both  right hand sides  of \eqref{eq.xx1} and \eqref{eq.xx2} are equal to $-2$. This completes the proof of (b).

(a) Since $\al$ is a trivial knot, $\al= -\ve^2 - \ve^{-2}$ in $\cS_\ve\SM$ and $\al= -\xi^2 -\xi^{-2}$ in $\cS_\ve\SM$. Hence
\begin{align*}
\tF_\xi(\al) &= \tF_\xi (-\ve^2 - \ve^{-2})= -\ve^2 - \ve^{-2}= T_N(-\xi^2 -\xi^{-2})=
T_N(\al),
\end{align*} 
where the third identity is part (b). Thus $\tF_\xi(\al) = T_N(\al)$.
\end{proof}

\begin{lemma}\lbl{r.markedannulus}  Proposition \ref{r.knot} holds if $\Sigma = \An$, the annulus, and $\cP\subset \partial \An$ consists of two points, one in each connected component of $\partial \An$.

\end{lemma}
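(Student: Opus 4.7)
Since trivial $\cP$-knots are handled by Lemma \ref{r.unknot}(a), and since the only non-trivial simple closed curve in $\An \setminus \cP$ up to $\cP$-isotopy is the core of the annulus, we may assume $\al$ is isotopic to the core. Choose a triangulation $\D$ of $(\An,\cP)$ consisting of the two boundary loops $\partial_1,\partial_2$ (one per boundary component) together with two inner $\cP$-arcs $a,b$ from $p_1$ to $p_2$ differing by a single Dehn twist around the core. The resulting quantum torus $\sX_\nu(\D)$ (for $\nu\in\{\ve,\xi\}$) is generated by $\partial_1^{\pm1},\partial_2^{\pm1},a^{\pm1},b^{\pm1}$ with commutation rules determined by the vertex matrix, and by Theorem \ref{r.torus} the coordinate map $\vpD:\cS_\nu(\An,\cP)\embed\sX_\nu(\D)$ is injective.

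The key geometric step is to show that
$$\vpD(\al)=K+K^{-1}$$
in $\sX_\nu(\D)$, for some invertible normalized monomial $K$ in the generators of $\D$ whose exponent vector does not depend on $\nu$. To see this, lift $\al$ and $a$ to $\An\times(-1,1)$ at different heights and apply the Kauffman bracket skein relation to their unique crossing in the projection to $\An$. The two smoothings are $\cP$-arcs from $p_1$ to $p_2$, one $\cP$-isotopic to $b$ and the other to its ``opposite-twist'' sibling; both are expressible as normalized monomials in $\{a,b,\partial_1,\partial_2\}$ via the embedding of Theorem \ref{r.torus} (the second one via a flip identity analogous to Proposition \ref{r.42}). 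Dividing by $a$ on the right in $\sX_\nu(\D)$ and gathering powers of $\nu$ via the Weyl normalization, the two contributions combine into $K+K^{-1}$ with the same formal exponent vector for both $\nu=\ve$ and $\nu=\xi$.

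Once the formula $\vpD(\al)=K+K^{-1}$ is in hand, the conclusion is purely algebraic. Equation \eqref{eq.power} gives $F_N(K^{\pm1})=K^{\pm N}$, hence
$$F_N(\vpD(\al))=K^N+K^{-N}\quad\text{in }\sX_\xi(\D).$$
On the other hand, identity \eqref{eTn} applied in $\sX_\xi(\D)$ to the invertible element $K$ gives $T_N(K+K^{-1})=K^N+K^{-N}$, so by injectivity of $\vpD$,
$$\vpD(T_N(\al))=T_N(\vpD(\al))=K^N+K^{-N}=F_N(\vpD(\al)).$$
Unwinding the definition of $\tF_\xi$ as the pullback $(\tvpD)^{-1}\circ\tF_N\circ\tvpD$ yields $\tF_\xi(\al)=T_N(\al)$ in $\widetilde{\cS}_\xi(\An,\cP)$, and in fact in $\cS_\xi(\An,\cP)$.

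The main obstacle is the geometric identification $\vpD(\al)=K+K^{-1}$: this is the only computation requiring actual skein-theoretic manipulation, and it is the place where the specific combinatorics of the annulus triangulation enter. After this step, the argument is a one-line application of $T_N(K+K^{-1})=K^N+K^{-N}$, which is precisely the $E=0$ case of Proposition \ref{p.prop31}.
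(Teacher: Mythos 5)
There is a genuine gap, and it sits exactly where you flagged "the main obstacle": the claimed identity $\vpD(\al)=K+K^{-1}$ is false. Resolving the unique crossing of the core $\al$ with the inner edge $a$ does give $a\al=\xi\, b^*+\xi^{-1}b$, where $b^*$ is the opposite-twist arc, but $b^*$ is \emph{not} a normalized monomial in the triangulation: the flip identity \eqref{eq.aa1} expresses it as a \emph{sum of two} monomials, $b^*=[b^{-1}a^2]+[b^{-1}cd]$. Hence the core is a sum of \emph{three} monomials,
$$\vpD(\al)=[a^{-1}b^{-1}cd]+[ab^{-1}]+[a^{-1}b]=X+Y+Y^{-1},\qquad YX=\xi^4XY,$$
which is of the form $K+K^{-1}+E$ with $E\neq 0$, not $K+K^{-1}$.

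This matters because your subsequent algebra then only uses the trivial identity \eqref{eTn}, $T_N(K+K^{-1})=K^N+K^{-N}$, which holds for every $N$ and every $\xi$; if your computation were right, the lemma (and hence Proposition \ref{r.knot}) would hold with no hypothesis on $\xi$ and $N$, contradicting Theorem \ref{thm.A}. The paper's proof instead applies $F_N$ termwise to get $X^N+Y^N+Y^{-N}$ and compares with $T_N(X+Y+Y^{-1})$ via Corollary \ref{c.32}, i.e.\ $T_N(K+K^{-1}+E)=K^N+K^{-N}+E^N$, which is precisely where the hypothesis $\ord(\xi^4)=N$ enters (through the vanishing of the coefficients $c(N,r,j)$, equivalently of the Gauss binomials $\binom{N}{k}_{\xi^4}$). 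To repair your argument, replace the claim $\vpD(\al)=K+K^{-1}$ by the three-term formula above and invoke Corollary \ref{c.32} in place of \eqref{eTn}; the rest of your outline (triangulate, push through $F_N$, pull back by $\tvpD$) then matches the paper's proof.
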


\begin{proof} If $\al$ is a trivial $\cP$-knot, then the result follows from  Lemma \ref{r.unknot}. We assume $\al$ is non-trivial. Then $\al$ is the core of the annulus, i.e. $\al$ is a parallel of a boundary component of $\An$.
Let $\D=\{a,b,c,d\}$ be the triangulation of $(\BA,\cP)$ shown in Figure \ref{fig:annulusquasitriangulation}.
\FIGc{annulusquasitriangulation}{Triangulation of $(\BA,\cP)$. $c,d$ are boundary $\cP$-arcs.}{2.2cm}

The Muller algebra $\sX_\xi(\D)$ is a quantum torus with generators $a,b,c,d$ where any two of them commute, except for $a$ and $b$ for which  $ab = \xi^{-2} ba$. 
We calculate $\al$ as an element of $\sX_\xi(\D)$ as follows.

First, calculate $a\al$ by using the skein relation, see Figure \ref{fig:case3eqn2}.
\FIGc{case3eqn2}{Computation of $a\al = \xi b^* + \xi^{-1}b$.}{2cm}

Here $b^*$ is new edge obtained from the flip of $\D$ at $b$ as defined in Figure \ref{fig:mutation}. From Equation (\ref{eq.aa1}) we have that $b^* = [b^{-1}a^2] + [b^{-1}cd]$. Thus,
\begin{align}
\al & = a^{-1} (a\al)= a^{-1} (\xi b^* + \xi ^{-1}b) = a^{-1} (\xi  ([b^{-1}a^2] + [b^{-1}cd] )+ \xi ^{-1}b) \nonumber\\
\lbl{eq.v1}& =   [a^{-1} b^{-1} c d]+[a b^{-1}] +[ a^{-1} b]= X + Y + Y^{-1}.
\end{align}
where $X= [a^{-1} b^{-1} c d]$ and $Y= [a b^{-1}]$. From the commutation relations in $\sX_\xi(\D)$, we get $YX= \xi^4 XY$. 
Since each   of $\{a,b,c,d\}$ is a $\cP$-arc, from Proposition \ref{r.arc5}, we have
\be 
\lbl{eq.v2}
\tF_\xi(\al)= [a^{-N} b^{-N} c^N d^N]+[a^N b^{-N}] +[ a^{-N} b^N]= X^N + Y^N + Y^{-N}.
\ee
Since $\ord(\xi^4)=N$, 
Corollary \ref{c.32} shows that
$$
T_N(\al) = T_N(X + Y + Y^{-1})= X^N + Y^N + Y^{-N},
$$
which is equal to $\tF_\xi(\al)$ by \eqref{eq.v2}. This completes the proof.
\end{proof}

\def\cQ{\mathcal Q}
\begin{lemma} \lbl{r.knot1a} Proposition \ref{r.knot} holds if $\al$ is not 0 in $H_1(\Sigma, \BZ)$. 
\end{lemma}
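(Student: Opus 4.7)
The strategy is to reduce Lemma \ref{r.knot1a} to the marked annulus case (Lemma \ref{r.markedannulus}) via the functoriality of the Frobenius homomorphism (Proposition \ref{prop.three}). The key geometric input is that a non-null-homologous $\cP$-knot $\al$ sits as the core of an embedded marked annulus inside $\Sigma$, after possibly adjoining two auxiliary marked points on $\pS$.

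Since $[\al] \neq 0$ in $H_1(\Sigma,\BZ)$, the curve $\al$ is non-separating, and the intersection pairing between $H_1(\Sigma,\BZ)$ and $H_1(\Sigma,\pS;\BZ)$ produces a properly embedded simple arc $\eta \subset \Sigma$ with distinct endpoints $q_1,q_2 \in \pS$ meeting $\al$ transversely in exactly one point. If $q_1$ or $q_2$ is not already in $\cP$, I would add it as a marked point via the surgery of Scenario 1 (Proposition \ref{prop.addptsurgerytorus}); set $\cP' = \cP \cup \{q_1,q_2\}$. By Proposition \ref{r.func} the inclusion $\iota_*:\cS_\xi\SM \embed \cS_\xi(\Sigma,\cP')$ is injective and, since both sides are Ore domains (Theorem \ref{r.torus}(b)), extends by Ore localization to an injection $\tilde\iota_*:\widetilde{\cS}_\xi\SM \embed \widetilde{\cS}_\xi(\Sigma,\cP')$ of division algebras.

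Next let $A := N(\al \cup \eta) \subset \Sigma$ be a regular neighborhood. Since $\al \cup \eta$ is a simple closed curve wedged with a simple arc at a single point, $\chi(\al \cup \eta) = 0$, so $\chi(A) = 0$; as $A$ is orientable with nonempty boundary, the classification of surfaces forces $A$ to be an annulus. A local analysis around the transverse crossing $p = \al \cap \eta$ (tracing $\partial A$ through the four attaching regions around $p$ and the half-disks at $q_1,q_2$) shows that the two boundary components of $A$ meet $\pS$ separately at $q_1$ and at $q_2$. Hence $(A,\{q_1,q_2\})$ is a marked annulus in the sense of Lemma \ref{r.markedannulus}, and the natural inclusion $j:(A,\{q_1,q_2\}) \embed (\Sigma,\cP')$ is a morphism of marked surfaces identifying the core of $A$ with $\al$.

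Lemma \ref{r.markedannulus} now gives $\tF_\xi(\al) = T_N(\al)$ inside $\widetilde{\cS}_\xi(A,\{q_1,q_2\})$. Pushing this equality forward through $j$ via Proposition \ref{prop.three} (and using that $T_N$, being a polynomial, commutes with algebra maps) yields $\tF_\xi(\al) = T_N(\al)$ in $\widetilde{\cS}_\xi(\Sigma,\cP')$. Because $\tilde\iota_*$ is injective and, again by Proposition \ref{prop.three}, commutes with $\tF_\xi$, the desired equality descends to $\widetilde{\cS}_\xi\SM$. The main technical obstacle is purely topological: producing the dual arc $\eta$ with both endpoints available in $\cP$ after a preparatory surgery, and verifying the claim that the two boundary components of $A$ contain $q_1$ and $q_2$ separately. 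This is precisely the step where the non-null-homology hypothesis on $\al$ is used in an essential way; the complementary case of a null-homologous but non-trivial knot will presumably require a separate argument.
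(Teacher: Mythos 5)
Your proposal is correct and takes essentially the same route as the paper: produce a properly embedded arc meeting $\al$ in exactly one point, adjoin its endpoints as marked points, note that a tubular neighborhood of the union is a marked annulus with one marked point on each boundary component, and then combine Lemma \ref{r.markedannulus} with the functoriality of Proposition \ref{prop.three} and the injectivity of Proposition \ref{r.func}. One small correction: a non-null-homologous simple closed curve in a surface with boundary need not be non-separating (the core of an annulus already separates), so your opening assertion is false as stated; the existence of the dual arc still holds, and the paper obtains it by cutting $\Sigma$ along $\al$ and treating the connected and disconnected cases separately, using $[\al]\neq 0$ in the separating case precisely to guarantee that each piece has a boundary component other than the two copies of $\al$.
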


\begin{proof} 
{\em Claim.} If $\al$ is not 0 in $H_1(\Sigma, \BZ)$, then there exists a properly embedded arc $a \subset \Sigma$ such that $|a \cap \al|=1$. \\
{\em Proof of Claim.} Cutting $\Sigma$ along $\al$ we get a (possibly non-connected) surface $\Sigma'$ whose boundary contains two components $\beta_1, \beta_2$ coming from $\al$. That is, we get $\Sigma'$ from $\Sigma$ by gluing $\beta_1$ to $\beta_2$ via the quotient map $\pr: \Sigma' \to \Sigma$, where $\pr(\beta_1)=\pr(\beta_2)=\al$. Choose a point $p\in \al$ and let $p_i\in \beta_i$ such that $\pr(p_i)=p$ for $i=1,2$.

Suppose first that $\Sigma'$ is connected. For $i=1,2$ choose a properly embedded arc $a_i$ connecting  $p_i\in \beta_i$ and a point in a boundary component of $\Sigma'$ which is not $\beta_1$ nor $\beta_2$. We can further assume that $a_1 \cap a_2=\emptyset$ since if they intersect once then replacing the crossing with either a positive or negative smoothing from the Kauffman skein relation (only one will work) will yield arcs that do not intersect and end at the same points as $a_1,a_2$, and the general case follows from an induction argument. Then $a=\pr(a_1 \cup a_2)$ is an arc such that $|a \cap \al|=1$.

Now suppose $\Sigma'$ has 2 connected components $\Sigma_1$ and $\Sigma_2$, with $\beta_i \subset \Sigma_i$. Since $\al$ is not homologically trivial, each $\Sigma_i$ has a boundary component other than $\beta_i$. For $i=1,2$ choose a properly embedded arc $a_i$ connecting  $p_i\in \beta_i$ and a point in a boundary component of $\Sigma'$ which is not $\beta_i$. Then $a=\pr(a_1 \cup a_2)$ is an arc such that $|a \cap \al|=1$. This completes the proof of the claim.

Let $\cQ = \partial a$ and 
$\cP'=\cP \cup \cQ$. Let $S\subset \Sigma$ be the closure of a tubular neighborhood of $\al \cup a$. Then $S$ is an annulus, and $\cQ$ consists of 2 points, one in each connected component of $\partial S$. Let $\tF_{\xi,(S,\cQ)}$, $\tF_{\xi,\SM}$, and $\tF_{\xi,(\Sigma,\cP')}$ be the map $\tF_\xi$ applicable respectively to the totally marked surfaces $(S,\cQ)$, $\SM$, and $(\Sigma,\cP')$. By the functoriality of the inclusion $(S,\cQ)\subset (\Sigma, \cP')$ (see Proposition \ref{prop.three}) we get the first of the following identities 
$$ \tF_{\xi,(\Sigma,\cP')} (\al) = \tF_{\xi,(S,\cQ)}(\al) = T_N(\al)\ \text{in } \cS_\xi(\Sigma,\cP'),$$
while the second follows from Lemma \ref{r.markedannulus}. The functoriality of the inclusion $ \cP \subset \cP'$  gives
$$ \tF_{\xi,(\Sigma,\cP)} (\al) =  \tF_{\xi,(\Sigma,\cP')} (\al) \ \text{in } \cS_\xi(\Sigma,\cP').$$
It follows that $$ \tF_{\xi,(\Sigma,\cP)} (\al)=  T_N(\al) \ \text{in } \cS_\xi(\Sigma,\cP').$$
Since the natural map $\cS\SM \to \cS(\Sigma, \cP')$ is an embedding (Proposition \ref{r.func}), we also have $ \tF_{\xi,(\Sigma,\cP)} (\al)=  T_N(\al) \ \text{in } \cS_\xi(\Sigma,\cP)$, completing the proof.
\end{proof}

\def\SMp{(\Sigma', \cP')}
Now we proceed to the proof of Proposition \ref{r.knot}.
\begin{proof}[Proof of Proposition \ref{r.knot}]
If $\al\neq 0$ in $H_1(\Sigma,\BZ)$, then the statement follows from Lemma~\ref{r.knot1a}.  Assume  $\al=0$  in $H_1(\Sigma,\BZ)$. The idea we employ is to remove a disk in $\Sigma$ so that $\al$ becomes homologically non-trivial in the new surface, then use the surgery theory developed in Section \ref{sec.surgery}.

 Since $\al=0$ in $H_1(\Sigma,\BZ)$, there is a surface $S\subset \Sigma$ such that $\al=\partial S$. Let $D\subset S$ be a closed disk in the interior of $S$ and  $\beta=\partial D$. Let $\Sigma'$ be obtained from $\Sigma$ by removing the interior of $D$. Fix a point $p\in \beta$ and let $\cP'=\cP \cup \{p\}$. Since $\SM$ is triangulable, $\SMp$ is also triangulable and $(\Sigma', \cP)$ is quasitriangulable.

Choose an arbitrary quasitriangulation $\D'$ of $(\Sigma', \cP)$. Via Proposition \ref{t.holetrick}, by plugging the unmarked boundary component $\beta$ we get a triangulation $\D$ of $\SM$ and a quotient map $\Psi_\zeta: \CZ_\zeta(\D') \to \CZ_\zeta(\D)$ for each $\zeta \in \Cx$, which we will just call $\Psi$ unless there is confusion. 
Since $\D$ is a triangulation, we have $\CZ_\zeta(\D) =\sX_\zeta(\D)$.

For each $\zeta \in \Cx$ we have the inclusions  
$$\CZ_\zeta(\D')\subset \sX_\zeta(\D') \subset \widetilde{\cS}_\zeta(\Sigma', \cP'),$$ where the second one comes from 
$\sX_\zeta(\D') \subset \widetilde{\cS}_\zeta(\Sigma', \cP) \subset \widetilde{\cS}_\zeta(\Sigma', \cP')$.

\noindent {\em Claim 1.} The map $\tF_\xi: \tcS_\ve(\Sigma',\cP') \to \tcS_\xi(\Sigma',\cP')$ restricts to a map from $\CZ_\ve(\D')$ to $\CZ_\xi(\D')$. That is, $\tF_\xi(\CZ_\ve(\D')) \subset \CZ_\xi(\D')$. In other words, there exists a map $F^\beta_\xi$ corresponding to the dashed arrow in the following commutative diagram.
\[
\begin{tikzcd}
\tcS_\ve(\Sigma',\cP') 
\arrow[r,"\tF_\xi"] & \tcS_\xi(\Sigma',\cP')  
\\
\CZ_\ve(\D')\arrow[u, hook] \arrow[r,dashed,"F^\beta_\xi"] & \CZ_\xi(\D') \arrow[u, hook]
\end{tikzcd}
\]
{\em Proof of Claim 1.} Let $a\in \D'$ be the only monogon edge (which must correspond to $\beta$). By definition, the set consisting of \\
(i) elements in $\D'\setminus \{a\}$ and their inverses,  $a$ and $a^*$, and \\
(ii) $\beta$\\  generates the $\BC$-algebra $\CZ_\ve(\D')$. Let us look at each of these generators. If $x$ is an element of type (i) above, then by Proposition \ref{r.arc5}, we have $\tF_\xi(x)= x^N$ which is in $\CZ_\xi(\D')$. Consider the remaining case $x=\beta$. Since the class of $\beta$ in $H_1(\Sigma',\BZ)$ is nontrivial, by Lemma \ref{r.knot1a}, we have
\be \lbl{eq.8aa}
\tF_\xi(\beta)= T_N(\beta)
\ee
 which is also in $\CZ_\xi(\D')$. Claim 1 is proved.

\noindent{\em Claim 2.} The following diagram is commutative.
\be \lbl{eq.cd5}
\begin{tikzcd}
\CZ_\ve(\D') \arrow[d,"\Psi_\ve"] \arrow[r,"F^\beta_\xi"] & \CZ_\xi(\D') \arrow[d,"\Psi_\xi"] \\
\sX_\ve(\D) \arrow[r,"F_N"] & \sX_\xi(\D)
\end{tikzcd}
\ee
{\em Proof of Claim 2.} We have to show that
\be \lbl{eq.8a}
(F_N \circ \Psi)(x) = (\Psi \circ F^\beta_\xi)(x) \quad \text{for all} \ x\in \CZ_\ve(\D').
\ee
It is enough to check the commutativity on the set of generators of $\CZ_\ve(\D')$ described in (i) and (ii) above. If \eqref{eq.8a} holds for $x$ which is invertible, then it holds for $x^{-1}$. Thus it is enough to check \eqref{eq.8a} for $x\in \D' \cup \{ a^*, \beta\}$. 
Assume the notations $a,b,c$ of the edges near $\beta$ are as in Figure \ref{fig:surgery}. 

First assume $x \not \in \{ a,a^*,b,\beta\}$. By \eqref{eq.2a}, we have $\Psi(x)=x$. Hence the left hand side of \eqref{eq.8a} is
$$F_N(\Psi(x))= F_N(x)= x^N.$$
On the other hand, the right hand side of \eqref{eq.8a} is
$$\Psi(F^\beta_\xi(x))=\Psi(\tF_\xi(x)) = \Psi(x^N)= x^N, $$
which proves \eqref{eq.8a} for $x \not \in \{ a,a^*,b,\beta\}$.

Assume $x=a$ or $x=a^*$. By \eqref{eq.2b}, we have $\Psi(x)=0$. Hence the left hand side of \eqref{eq.8a} is 0. On the other hand, the right hand side is
$$\Psi(F^\beta_\xi(x))= \Psi(\tF_\xi(x)) = \Psi(x^N)=0,$$
which proves \eqref{eq.8a} in this case.

Now consider the remaining case $x=\beta$. By \eqref{eq.2b}, we have $\Psi(\beta)= -\ve^2 -\ve^{-2}$. Hence the left hand side of \eqref{eq.8a} is
\be \nonumber
  F_N(\Psi(\beta))= F_N(-\ve^2 -\ve^{-2}) = -\ve^2 -\ve^{-2} = T_N(-\xi^2 -\xi^{-2}),
  \ee
  where the last identity is \eqref{eq.vexi}.
On the other hand, using \eqref{eq.8aa} and the fact that $\Psi$ is a $\BC$-algebra homomorphism, we have
\be 
\nonumber
  \Psi(F^\beta_\xi(x))=\Psi(\tF_\xi(x)) = \Psi(T_N(\beta))= T_N(\Psi(\beta))= T_N(-\xi^2 -\xi^{-2}).
  \ee
 Thus we always have \eqref{eq.8a}. This completes the proof of Claim 2.
 
 Let us continue with the proof of the proposition. Since the class of $\al$ is not 0 in $H_1(\Sigma',\BZ)$, by Lemma \ref{r.knot1a}, we have $F^\beta_\xi(\al)=\tF_\xi(\al)= T_N(\al)$. The commutativity of Diagram \eqref{eq.cd5} and the fact that $\Psi$ is a $\BC$-algebra homomorphism implies that
 \begin{align} F_N(\Psi_\ve(\al ))&= \Psi_\xi(F^\beta_\xi(\al ))= \Psi_\xi(T_N(\al )) \nonumber\\
 &= T_N(\Psi_\xi(\al )).
 \lbl{eq.9p} 
 \end{align}
 Note that $\al $ defines an element in $\cS_\nu(\Sigma', \cP)$ for $\nu=\ve,\xi$. Following the commutativity of Diagram \eqref{d.holesurgery} in Proposition \ref{t.holetrick}, we have that
 \begin{align}
&\Psi_\ve(\al)=\al \in \cS_\ve\SM \subset \sX_\ve(\D), \lbl{eq.alve}\\
&\Psi_\xi(\al)=\al \in \cS_\xi\SM \subset \sX_\xi(\D). \lbl{eq.alxi}
\end{align}

Then we may compute
\begin{align*}
\tF_\xi(\al) & = F_N(\al), \quad \text{by Proposition~\ref{prop.two}} \\
& = F_N(\Psi_\ve(\al)), \quad \text{by \eqref{eq.alve}} \\
& = T_N(\Psi_\xi(\al)), \quad \text{by \eqref{eq.9p}} \\
& = T_N(\al), \quad \text{by \eqref{eq.alxi}},
\end{align*}
completing the proof of Proposition \ref{r.knot}.
\end{proof}

\def\LMN{\cT\MN}
\subsection{Proof of Theorem \ref{thm.surface}}\lbl{sec.trisurfacethm}

\begin{proof} [Proof of Theorem \ref{thm.surface}]

The $\BC$-algebra $\Se\SM$ is generated by $\cP$-arcs and $\cP$-knots. If $a$ is a $\cP$-arc, then by Proposition \ref{r.arc5}, $\tF_\xi(a)= a^N\in \Sx\SM$. If $\al$ is a $\cP$-knot, then by Proposition \ref{r.knot}, $\tF_\xi(\al)= T_N(\al)\in \Sx\SM$. It follows that $\tF_\xi(\Se\SM)\subset \Sx\SM$. Hence  $\tF_\xi$ restricts to a $\BC$-algebra homomorphism $F_\xi: \Se\SM \to \Sx\SM$. Since on $\XD$, $\tF_\xi$ and $F_N$ are the same, $F_\xi$ is the restriction of $F_N$ on $\Se\SM$.
From Proposition \ref{prop.two}, $F_\xi$ does not depend on the triangulation~$\D$. This proves part (a). Part (b) was established in Propositions \ref{r.arc5} and \ref{r.knot}.
\end{proof}


\def\cSe{\cS_\xe}
\subsection{Proof of Theorem \ref{t.ChebyshevFrobenius}}\lbl{sec.mainthm}
\begin{proof}[Proof of Theorem \ref{t.ChebyshevFrobenius}]   Recall that $\hPhi_N: \LMN \to \LMN$ is the $\BC$-linear map defined so that that if 
$T$ is an $\cN$-tangle with arc components $a_1, \dots, a_k$ and  knot components $\al_1,\dots,\al_l$, then
\be\lbl{eq.def00}
\hPhi_N(T) = \sum_{0\le j_1, \dots, j_l\le N} c_{j_1} \dots c_{j_l}  a_1^{(N)}
 \cup \dots \cup  a_k^{(N)} \cup \,\al_1^{(j_1)} \cup \cdots \cup \al_l^{(j_l)} \ 
 \ee 
where $T_N(z) = \sum_{i=0}^N c_i z^i$ is the $N$th Chebyshev polynomial of type 1, see \eqref{eq.Che}. 
To show that $\hPhi_N: \LMN \to \LMN$ descends to a map $\Se\MN \to \Sx\MN$ we have to show that $\hPhi_N(\Rel_\ve) \subset \Rel_\xi$. Let $\hPhi_\xi: \LMN \to \Sx\MN$ be the composition
$$ \hPhi_\xi: \LMN \overset {\hPhi_N} \longrightarrow \LMN \overset{{[\,\cdot\,]_\xi}}{\longrightarrow} \Sx\MN.$$
Then we have to show that $\hPhi_\xi(\Rel_\ve)=0$.
There are 3 types of elements which span $\Rel_\ve$: trivial arc relation elements, trivial knot relation elements, and skein relation elements, and we consider them separately.

(i) Suppose $x$ is a trivial arc relation element (see Figure \ref{fig:trivialarc}).  The $N$ copies $a^{(N)}$ in $\hPhi_\xi(x)$ have $2N$ endpoints, and by 
reordering the height of the endpoints, from  $a^{(N)}$ we can obtain a trivial arc. Hence, the reordering relation (see Figure \ref{fig:boundary}) and the trivial arc relation show that $\hPhi_{\xi}(x) =0$.

(ii) Suppose $x=\ve^2 + \ve^{-2} +\al $ is a trivial loop relation element, where $\al$ is a trivial loop. Each parallel of $\al$ is also a trivial loop, which is equal to $-\xi^2 -\xi^{-2}$ in $\Sx\MN$. Hence $\hPhi_\xi(\al) = T_N(-\xi^2 -\xi^{-2})$, and
$$ \hPhi_{\xi}(x)= \ve^2 + \ve^{-2} + T_N((\al))=  \ve^2 + \ve^{-2} + T_N(-\xi^2 -\xi^{-2})=0,$$
where the last identity is \eqref{eq.vexi}. 

(iii) Suppose $x = T - \ve T_+ - \ve^{-1} T_-$ is a skein relation element. Here 
 $T, T_+, T_-$ are  $\cN$-tangles which are identical outside a ball $B$ in which they look like in Figure \ref{fig:skein1a}.
 
 \FIGc{skein1a}{From left to right: the tangles $T, T_+, T_-$}{1.3cm}
 
 \underline{Case I: the two strands of $T\cap B$ belong to two distinct components of $T$}.
 Let $T_1$ be the component of $T$ containing the overpass strand of $T \cap B$ and $T_2= T \setminus T_1$.
  Let $M'$ be the closure of a small neighborhood of $B \cup T=B\cup T_+=B\cup T_-$.   
 Write
 $\cN':= \cN \cap \partial (M')$. The functoriality 
 of the inclusion $(M', \cN') \to \MN$ implies that it is enough to show
  $\hPhi_\xi(x)=0$ for $(M', \cN')$. Thus now we replace $\MN$ by $(M', \cN')$.

\def\SQ{(\Sigma, \cQ)}
\def\LSQ{\cT\SQ}
\def\hF{\hat F}
 Note that $M'$ is homeomorphic to $\Sigma \times (-1,1)$ where $\Sigma$ is an oriented surface which is the union of the shaded disk of Figure \ref{fig:skein1a} and the ribbons obtained by thickening the tangle $T_+$. As usual, identify $\Sigma $ with $\Sigma \times \{0\}$. Then, all four $\cN'$-tangles $T_1, T_2, T_+, T_-$ are in $\Sigma$ and have vertical framing. Note that $\Sigma$ might be disconnected, but each of its connected components has non-empty boundary. Let $\cP= \cN \cap \Sigma$. Then $\cS_\nu(M', \cN')= \cS_\nu\SM$ for $\nu=\xi,\ve$. Enlarge $\cP$ to a larger set of marked points $\cQ$ such that $(\Sigma, \cQ)$ is triangulable. Since the induced map $\iota_*: \Sx\SM \to \Sx\SQ$ is injective (by Proposition \ref{r.func}), it is enough to show that $\hPhi_\xi(x)=0$ in $\Sx\SQ= \LSQ/\Rel_\xi$. Here $\LSQ:= \cT(\Sigma \times (-1,1), \cQ \times (-1,1))$.
 
  The vector space $\LSQ$ is a $\BC$-algebra, where the product $\al \beta$ of two ($\cQ\times (-1,1)$)-tangles $\al$ and $\beta $ is the result of placing $\al$ on top of $\beta$. The map $\hPhi_\xi :\LSQ \to \Sx\SQ$ is an algebra homomorphism. Recall that for an element $y\in \LSQ$ we denote by $[y]_\nu$ its image under the projection $\LSQ \to \cS_\nu(\Sigma,\mathcal{Q})= \LSQ/\Rel_\nu$ for $\nu=\xi,\ve$.
 
 As $\SQ$ is triangulable, by Theorem \ref{thm.surface} we have the map $F_\xi: \Se\SQ\to \Sx\SQ$. 

Suppose  $y$ is a component of one of $T_1,T_2,T_+,T_-$, then $y$ is either a $\cQ$-knot (in $\Sigma$) or a $\cQ$-arc (in $\Sigma$) whose end points are distinct, with vertical framing in both cases. It follows that $[y^{(k)}]_\xi= [y^k]_\xi$. If $y$ is a knot component then Proposition \ref{r.knot} shows that $F_\xi([y]_\ve)= T_N([y]_\xi)= \hPhi_\xi(y) $.
Each of $T_1,T_2,T_+,T_-$ is the product (in $\LSQ$) of its components as the components are disjoint in $\Sigma$. Hence from the definition of $\hPhi_\xi$, we have
\be
\notag 
\hPhi_{\xi} (T_i)= F_\xi ([T_i]_\ve) \quad \text{for all } \ T_i \in \{ T_1, T_2, T_+, T_-\}.
\ee
As $T= T_1 T_2$ in $\LSQ$, we  have
 \be 
 \notag
 \hPhi_{\xi}(T)=  \hPhi_{\xi}(T_1 T_2) = \hPhi_{\xi}(T_1) \hPhi_{\xi}(T_2) = F_\xi ([T_1]_\ve) F_\xi([T_2]_\ve)= F_\xi([T]_\ve).
 \ee
As $x= T- \ve T_+ - \ve^{-1} T_-$, we also have $\hPhi_\xi(x) = F_\xi([x]_\ve).$ But $[x]_\ve =0$ because $x$ is a skein relation element.
This completes the proof that $\hPhi(x)=0$ in Case I.

\underline{Case II: Both strands of $T \cap B$ belong to the same component of $T$}. We show that this case reduces to the previous case.

Both strands of $T \cap B$ belong to the same component of $T$ means that some pair of non-opposite points of $T \cap \partial B$ are connected by a path in $T\setminus B$ (see Figure \ref{fig:nonopposite}). Assume that the two right hand points of $T \cap \partial D$ are connected by a path in $T\setminus B$. All other cases are similar.
\FIGc{nonopposite}{Four possibilities for non-opposite points being connected by a path in $T\setminus B$.}{2cm}
Then the two strands of $T_+$ in $B$ belong to two different components of $T_+$.
We isotope $T_+$ in $B$ so that its diagram forms a bigon, and calculate $\hPhi_{\xi}(T_+)$ as follows.
 
 \begin{align}  \hPhi_{\xi} \Lplu&=  \hPhi_{\xi} \Lpluss   \qquad \text{by isotopy} \nonumber\\
  &= \ve\,  \hPhi_{\xi} \uplus + \ve^{-1} \hPhi_{\xi} \uminus  \notag\\
 &= \ve (-\ve^{-3})  \hPhi_{\xi}\Lminus + \ve^{-1} \hPhi_{\xi} \LLL   \notag\\
 & =  -\ve^{-2}\hPhi_{\xi} (T_-) + \ve^{-1} \hPhi_{\xi} (T), \lbl{eq.zz1}
 \end{align} 
 where the second equality follows from the skein relation which can be used since the two strands of $T_+$ in the applicable ball belong to different components of $T_+$  (by case I), and the third equality follows from the well-known identity correcting a kink in the skein module:
$$ \negkinka = \ve\negkinkb + \ve^{-1}\negkinkc = (\ve + \ve^{-1}(-\ve^2-\ve^{-2}))\negkinkd = -\ve^{-3}\negkinkd.
$$
The  identity \eqref{eq.zz1} is equivalent to $\hPhi_\xi(x)=0$. 
This completes the proof of the theorem.
\end{proof}

\def\Supp{\text{Supp}}
\def\hatP{{\widehat{\partial}}}

\subsection{Consequence for marked surfaces} Suppose $\SM$ is a marked surface, with no restriction at all. Apply Theorem \ref{t.ChebyshevFrobenius} to $\MN=(\Sigma \times (-1,1), \cP \times (-1,1))$. Note that in this case $\Phi_\xi$ is automatically an algebra homomorphism. Besides, since the set of $\cP$-arcs and $\cP$-knots generate $\Se\SM$ as an algebra,  we get the following corollary.

\begin{proposition}\lbl{prop.five}
Suppose $\SM$ is a  marked surface, $\xi$ is a root of unity, $N=\ord(\xi^4)$, and $\ve = \xi^{N^2}$.  Then there exists a unique $\BC$-algebra homomorphism $\Phi_\xi: \cS_\ve\SM \to \cS_\xi\SM$ such that for $\cP$-arcs $a$ and $\cP$-knots $\al$,
$$
\Phi_\xi(a) = a^N, \ \ \ \ \Phi_\xi(\al) = T_N(\al).
$$
\end{proposition}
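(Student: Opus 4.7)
My plan is to deduce Proposition \ref{prop.five} directly from Theorem \ref{t.ChebyshevFrobenius} applied to the cylinder marked 3-manifold $(M,\cN)=(\Sigma\times(-1,1),\cP\times(-1,1))$. Theorem \ref{t.ChebyshevFrobenius} supplies a $\BC$-linear map $\Phi_\xi:\cS_\ve\SM\to\cS_\xi\SM$, and the work of the proof is (i) identifying its values on $\cP$-arcs and $\cP$-knots with the simple formulas in the statement and (ii) upgrading it from $\BC$-linear to $\BC$-algebra.

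For the formulas, suppose first that $\al$ is a $\cP$-knot (a simple closed curve in $\Sigma$ with vertical framing). The $k$-th framed power $\al^{(k)}$ is just $k$ parallel copies of $\al$ stacked along the framing in a tubular neighborhood; since $\al$ has no boundary in $\cN$ no height reordering at marked points intervenes, so $\al^{(k)}=\al^k$ in $\cS_\xi\SM$. Hence $\Phi_\xi(\al)=(T_N)^{\fr}(\al)=T_N(\al)$ by the very definition of $\Phi_\xi$ in Theorem \ref{t.ChebyshevFrobenius}. For a $\cP$-arc $a$, viewed as an element of $\cS\SM$ through the convention \eqref{eq.simul}, I will check that the $N$ parallel copies making up $a^{(N)}$ coincide with the $N$-fold product $a\cdot a\cdots a$ in $\cS_\xi\SM$: both are obtained by placing $N$ disjoint parallel copies of $a$ in $\Sigma\times(-1,1)$ at different heights, and the reordering relation of Figure \ref{fig:boundary} together with the normalization \eqref{eq.simul} gives the same scalar factor on each side. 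Thus $\Phi_\xi(a)=[a^{(N)}]_\xi=a^N$.

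For multiplicativity, the key observation is that $\hPhi_N$ makes sense already on the free module $\LMN$ and is defined componentwise. Given two $\cN$-tangles $T_1,T_2$ in the cylinder, the product $T_1T_2$ is obtained by pushing $T_1$ into $\Sigma\times(0,1)$ and $T_2$ into $\Sigma\times(-1,0)$, so the connected components of $T_1T_2$ are precisely (the pushed copies of) the components of $T_1$ and $T_2$. Applying framed powers and Chebyshev threadings is a local operation in a tubular neighborhood of each component, hence commutes with this stacking. Therefore $\hPhi_N(T_1T_2)=\iota_1\bigl(\hPhi_N(T_1)\bigr)\cup\iota_2\bigl(\hPhi_N(T_2)\bigr)$ in $\LMN$, which after passing to $\cS_\xi\SM$ reads $\Phi_\xi(T_1T_2)=\Phi_\xi(T_1)\Phi_\xi(T_2)$. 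Uniqueness is then immediate: the preferred basis $\BB_{\SM}$ shows that $\cS_\ve\SM$ is generated as a $\BC$-algebra by $\cP$-arcs and $\cP$-knots, so the formulas in the statement determine $\Phi_\xi$ completely.

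The main obstacle I expect is the book-keeping in step (i): verifying $a^{(N)}=a^N$ in $\cS_\xi\SM$ for a $\cP$-arc. The subtlety is that both the height-ordering convention \eqref{eq.simul} used to make a $\cP$-tangle into an $\cN$-tangle and the Weyl-type prefactor in \eqref{eq.simul} contribute powers of $q^{1/4}$, and one must check these cancel against the powers produced when interpreting ``stacking $N$ copies via the product'' as ``taking $N$ parallel copies along the framing.'' Once this identification is made, and once one recognizes that $\hPhi_N$ on the cylinder is multiplicative at the tangle level, the rest of the proof is a formal consequence of Theorem \ref{t.ChebyshevFrobenius}.
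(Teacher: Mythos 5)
Your proposal is correct and follows essentially the same route as the paper: the paper also obtains Proposition \ref{prop.five} by applying Theorem \ref{t.ChebyshevFrobenius} to the cylinder $(\Sigma\times(-1,1),\cP\times(-1,1))$, observing that $\Phi_\xi$ is automatically an algebra homomorphism there (and that $P^{\fr}(T)=P(T)$ for $\cP$-arcs and $\cP$-knots via \eqref{eq.simul}, as noted in Subsection \ref{sec.formulationmain}), and concluding uniqueness from the fact that $\cP$-arcs and $\cP$-knots generate $\cS_\ve\SM$. Your write-up merely makes explicit the bookkeeping the paper leaves implicit.
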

\begin{remark}
It follows from uniqueness that in the case where $\SM$ is a triangulable surface, $\Phi_\xi$ is the same as $F_\xi$ obtained in Theorem \ref{thm.surface}.
\end{remark}

\section{Image of $\Phi_\xi$ and (skew-)transparency}\lbl{sec.trans}

In this section we show that the image of the Chebyshev-Frobenius homomorphism $\Phi_\xi$ is either ``transparent'' or ``skew-transparent'' depending on whether $\xi^{2N}=\pm 1$. Our result generalizes known theorems regarding the center of the skein algebra \cite{BW1,Le2,FKL} of an unmarked surface and (skew-)transparent elements in the skein module of an unmarked 3-manifold \cite{Le2}.

We fix the ground ring to be $R=\BC$ throughout this section.
\def\hGamma{\hat{\Gamma}}
\subsection{Center of the skein algebra of an unmarked surface}

Fix a compact oriented surface $\Sigma$ with (possibly empty) boundary. For $\xi \in \Cx$ we write $\cS_\xi := \cS_\xi(\Sigma,\emptyset)$. In this context, and when $\xi$ is a root of unity, the Chebyshev-Frobenius homomorphism $\Phi_\xi: \cS_\ve \to \cS_\xi$ specializes to the Chebyshev homomorphism for the skein algebra of $\Sigma$ given in \cite{BW1}. The image of $\Phi_\xi$ is closely related to the center of $\cS_\xi$.


\begin{theorem}[\cite{FKL}]\lbl{r.bwcenter}
Let $\xi$ be a root of unity, $N= \ord(\xi^4)$, $\ve = \xi^{N^2}$, and $\cH$ the set of boundary components of $\Sigma$. Note that $\xi^{2N}$ is either 1 or $-1$.
Then the center $Z(\cS_\xi)$ of $\cS_\xi$ is given by

\be\lbl{e.surfacecenter}
Z(\cS_\xi) = \left\{ \begin{array}{ll}
\Phi_\xi(\cS_\ve)[\cH] & \text{if }\xi^{2N}=1, \\
\Phi_\xi(\cS_\ve^{\text{ev}})[\cH] & \text{if }\xi^{2N}=-1. \end{array} \right.
\ee
\end{theorem}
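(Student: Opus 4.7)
The plan is to establish the two containments separately. The inclusion $\supseteq$ follows more or less directly from results earlier in the paper, while the reverse inclusion is the substantive part and requires the Muller algebra machinery together with a careful analysis of the quantum torus in which $\cS_\xi$ embeds.

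For the easy direction $\supseteq$: each boundary component $\beta \in \cH$ is central in $\cS_\xi$, since any tangle in $\Sigma \times (-1,1)$ can be isotoped to miss a collar neighborhood of $\beta$. By Theorem \ref{r.mutransparent}, the image of $\Phi_\xi$ is transparent when $\xi^{2N}=1$ and skew-transparent when $\xi^{2N}=-1$. Transparent elements are manifestly central: if $x=\Phi_\xi(y)$ and $z$ is any tangle, then passing the strands of $z$ through $x$ does not alter the skein element, so $xz=zx$. In the skew-transparent case each strand crossing picks up a sign, which is trivially $+1$ exactly on the even subalgebra $\cS_\ve^{\text{ev}}$; this yields the corresponding inclusion. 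Multiplication by elements of $\BC[\cH]$ preserves centrality, giving $\Phi_\xi(\cS_\ve)[\cH]\subseteq Z(\cS_\xi)$ (resp.\ with $\cS_\ve^{\text{ev}}$).

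For the reverse inclusion $\subseteq$: the strategy is to enlarge the setup so that the Muller algebra embedding of Theorem \ref{r.torus} becomes available. Adjoin a finite set $\cP\subset\partial\Sigma$ of marked points --- at least one on each boundary component of $\cH$, and enough more so that $(\Sigma,\cP)$ becomes triangulable; if $\Sigma$ is closed, first remove a small disk and, at the end, reverse the operation using the plugging-a-hole surgery of Proposition \ref{t.holetrick}. By Proposition \ref{r.func} we have an injection $\cS_\xi\hookrightarrow \cS_\xi(\Sigma,\cP)$, and by Theorem \ref{r.torus} the latter sits inside a quantum torus $\sX_\xi(\D)$ for a triangulation $\D$. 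The center of $\sX_\xi(\D)$ is straightforwardly a lattice condition on the exponent $\bk$ coming from the antisymmetric form $\langle\cdot,\cdot\rangle_P$ together with the order of $\xi$. One then identifies the resulting central monomials, when they already lie in $\cS_\xi\subset \sX_\xi(\D)$, as products of Chebyshev-threaded knot classes and boundary curves, i.e.\ as elements of $\Phi_\xi(\cS_\ve)[\cH]$ (respectively $\Phi_\xi(\cS_\ve^{\text{ev}})[\cH]$), using $\Phi_\xi(a)=a^N$ for $\cP$-arcs, $\Phi_\xi(\al)=T_N(\al)$ for $\cP$-knots, and the description of $F_N(\sX_\ve(\D))$ via \eqref{eq.power} as the sublattice of $N$-th powers.

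The main obstacle will be this identification step. Because $\cS_\xi$ does not contain the $\cP$-arcs used to embed into $\sX_\xi(\D)$, the centralizer of $\cS_\xi$ inside $\sX_\xi(\D)$ is a priori strictly larger than the center of $\sX_\xi(\D)$: an element of $\cS_\xi$ might commute with every closed-curve class without commuting with the auxiliary arcs. One must argue that every such element is nonetheless already accounted for by Chebyshev-threading of knots together with boundary curves, and this is precisely where the dichotomy $\xi^{2N}=\pm 1$ emerges: in the former case any $N$-th framed power of a knot is central, while in the latter only even multiples guarantee centrality, forcing the restriction to $\cS_\ve^{\text{ev}}$. Choosing $\cP$ adapted to a pants-decomposition-like system of simple closed curves --- so that the centrality conditions on $\sX_\xi(\D)$ coming from testing against $\cS_\xi$ alone are strong enough to force the full quantum-torus central lattice condition on knot exponents --- should make this last reduction tractable.
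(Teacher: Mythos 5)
First, note that the paper does not prove Theorem \ref{r.bwcenter} at all: it is quoted from \cite{FKL}, and the surrounding text explicitly attributes the inclusion $\supseteq$ to \cite{BW1,Le2} and the converse inclusion (and the $\xi^{2N}=-1$ case) to \cite{FKL}. So there is no in-paper proof to compare against, and your proposal must stand on its own. Your $\supseteq$ direction is essentially fine: centrality of boundary curves is immediate, and (skew-)transparency of $\Phi_\xi(\cS_\ve)$ from Theorem \ref{r.mutransparent} plus the parity condition defining $\cS_\ve^{\mathrm{ev}}$ gives the stated inclusion.

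The converse inclusion, however, has a genuine gap, and it sits exactly at the step you flag as ``the main obstacle.'' The Muller-algebra strategy that works elsewhere in the paper (Lemma \ref{r.weakcenters}, Corollary \ref{l.centerinclusions}, and the whole of Section \ref{sec.reducedskein}) rests on the fact that $\cS\SM$ contains $\sX_+(\D)$ and therefore \emph{weakly generates} $\sX(\D)$, so that $Z(\cS)=Z(\sX(\D))\cap\cS$ and the center reduces to a lattice condition on exponents. For the unmarked algebra $\cS_\xi=\cS_\xi(\Sigma,\emptyset)$ this fails in both directions: the triangulation arcs are not elements of $\cS_\xi$, so an element commuting with all closed curves need not be central in $\sX_\xi(\D)$; and conversely a genuinely central element such as $\Phi_\xi(\al)=T_N(\al)=\sum_i X^{N\bk_i}$ is a sum of monomials that are in general \emph{individually non-central} in $\sX_\xi(\D)$ (one needs $\xi^{N\langle\bk_i,\bl\rangle_P}=1$ for all $\bl$, which is not implied by $\xi^{2N}=\pm1$) --- its centrality in $\cS_\xi$ comes from cancellations across monomials. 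Consequently ``compute $Z(\sX_\xi(\D))$ and intersect with $\cS_\xi$'' does not compute $Z(\cS_\xi)$, and your proposed remedy (choosing $\cP$ adapted to a pants-like curve system so that commutation with closed curves forces the torus lattice condition) is a hope, not an argument: a simple closed curve is a sum of many monomials in $\sX_\xi(\D)$, so testing commutation against it does not yield a clean grading-by-grading constraint. A second, smaller problem: for closed $\Sigma$ you cannot return from the punctured surface via Proposition \ref{t.holetrick}, since plugging requires the plugged component to be unmarked while a marked point survives elsewhere, which is impossible when $\beta$ is the only boundary component. The actual proof in \cite{FKL} does not go through the quantum torus; it works directly with the multicurve basis of $\cS_\xi$, analyzing commutators with simple closed curves via leading-term/parity arguments and the module structure of $\cS_\xi$ over the image of $\Phi_\xi$. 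As it stands, your proposal establishes only the easy inclusion.
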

Here $\cS_\xi^{\text{ev}}$ is the $\BC$-subspace of $\cS_\xi$ spanned by all 1-dimensional closed submanifolds $L$ of $\Sigma$ such that $\mu(L,\al) \equiv 0 \pmod{2}$ for all knots $\al\subset \Sigma$.
In \cite{BW1}, the right hand side of \eqref{e.surfacecenter} was shown to be a subset of the left hand side using methods of quantum Teichm\"uller theory in the case where $\xi^{2N}=1$. This result was reproven in \cite{Le2} using elementary skein methods. The generalization to $\xi^{2N}=-1$ and the converse inclusion was shown in \cite{FKL}.

\def\LMNT{\mathcal{T}(M \setminus T, \cN \setminus T)}
\subsection{(Skew-)transparency} In the skein module of a 3-manifold, we don't have a product structure, and hence cannot define central elements. Instead we will use the notion of {\em (skew-)transparent elements}, first considered in \cite{Le2}. Throughout this subsection we fix a marked 3-manifold $\MN$.

Suppose $T'$ and $T$ are disjoint $\cN$-tangles. 
Since $\Phi_\xi(T')$ can be presented by a $\BC$-linear combination of $\cN$-tangles in a small neighborhood of $T'$, one can define $\Phi_\xi(T')\cup T$  as an element of $\cS_\xi\MN$, see Subsection \ref{sec.func}.

Suppose $T_1,T_2$, and $T$ are $\cN$-tangles. We say that $T_1$ and $T_2$ are connected by a {\em  single $T$-pass $\cN$-isotopy} if there is a 
 continuous family of $\cN$-tangles $T_t$, $t \in [1,2]$, connecting $T_1$ and $T_2$ such that $T_t$ is transversal to $T$ for all $t \in [1,2]$ and furthermore that $T_t \cap T=\emptyset$ for $t \in [1,2]$ except for a single $s \in (1,2)$ for which $|T_s \cap T|=1$.

\begin{theorem}\lbl{r.mutransparent}
Suppose $\MN$ is a marked 3-manifold, $\xi$ is a root of unity, $N = \ord(\xi^4)$. Note that $\xi^{2N}$ is either 1 or $-1$.

(a) If $\xi^{2N}=1$ then the image of the Chebyshev-Frobenius homomorphism is {\em transparent} in the sense that if $T_1, T_2$ are $\cN$-isotopic $\cN$-tangles disjoint from another $\cN$-tangle $T$, then in $\cS_\xi\MN$ we have
\be 
\Phi_\xi(T) \cup T_1 = \Phi_\xi(T) \cup T_2.
\ee 

(b) If $\xi^{2N}=-1$ then the image of the Chebyshev-Frobenius homomorphism is {\em skew-transparent} in the sense that if $\cN$-tangles $T_1, T_2$ are connected by a single $T$-pass $\cN$-isotopy, where $T$ is another $\cN$-tangle, then in $\cS_\xi\MN$ we have
\be 
\Phi_\xi(T) \cup T_1 = - \Phi_\xi(T) \cup T_2.
\ee 
\end{theorem}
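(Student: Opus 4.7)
The strategy is to reduce the three-dimensional statement to a local calculation in a thickened marked surface, where Proposition \ref{prop.five} identifies $\Phi_\xi(c)$ on each component $c$ of $T$ with either $c^N$ (arc case) or $T_N(c)$ (knot case), and the Chebyshev identities of Section 3 supply the needed cancellations.

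First, put the $\cN$-isotopy from $T_1$ to $T_2$ in general position relative to $T$, obtaining a smooth family $(T_t)_{t\in[1,2]}$ that meets $T$ transversally at finitely many isolated times $t_1<\cdots<t_m$, each a single strand passage. Between consecutive times the $\cN$-isotopy class of $\Phi_\xi(T)\cup T_t$ in $\Sx\MN$ is constant, so it suffices to show that a single passage multiplies the skein class by $\xi^{2N}$. Localize the passage to a small 3-ball $B$ in which $T\cap B$ is a single segment $\tau$ lying in one component $c$ of $T$. Let $M'$ be a regular neighborhood of $c\cup B$ in $M$ and $\cN':=\cN\cap\partial M'$; then $(M',\cN')\cong(\Sigma\times(-1,1),\cP\times(-1,1))$ where $\Sigma$ is a disk if $c$ is an $\cN$-arc, or an annulus if $c$ is an $\cN$-knot, and $\cP\subset\partial\Sigma$ is a suitable marked set. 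By the functoriality of $\Phi_\xi$ and of the union operation (Subsection \ref{sec.func}), we may replace $\MN$ by $(M',\cN')$ and work inside $\Sx\SM$.

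Now expand using the Kauffman skein relation at each of the crossings created by the passage; this rewrites the difference $\Phi_\xi(c)\cup T_1-\xi^{2N}\Phi_\xi(c)\cup T_2$ as an explicit linear combination of smoothed skein elements. For the knot case $c=\al$, use the annular model of Lemma \ref{r.markedannulus}: triangulate the annulus so that the generators include $K, K^{-1}$ and $E$ with $KE=\xi^2 EK$ and $\al=K+K^{-1}+E$ as in \eqref{eq.v1}, and any local transverse strand is expressible in the Muller algebra generated by $K, K^{-1}, E$. In case (a), where $\ord(\xi^2)=N$, Corollary \ref{c.32} yields the clean identity $T_N(\al)=K^N+K^{-N}+E^N$; combined with Proposition \ref{r.Frobenius}, each summand $\ve$-commutes with every element in the image of the Frobenius map $F_N$, and this forces the smoothing difference to vanish. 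The arc case $c=a$ is treated in parallel using Theorem \ref{r.torus}: after Frobenius, $a^N$ interacts with a transverse strand through a single factor that is exactly absorbed by the skein-relation correction at the crossing.

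The main obstacle is case (b), where $\ord(\xi^2)=2N$ and Corollary \ref{c.32} no longer gives a clean three-term formula for $T_N$. Here one must retain the full expansion of Proposition \ref{p.prop31}: the coefficients $c(N,r,j)$ at $q=\xi$ survive, but combine---via the identity $T_N(-\xi^2-\xi^{-2})=-\ve^2-\ve^{-2}$ of Lemma \ref{r.unknot}(b) together with careful bookkeeping of signs coming from the skein relation at each of the crossings---to produce the overall factor $-1=\xi^{2N}$ rather than $+1$. This parallels, in the marked setting, the extension in \cite{FKL} from the transparent to the skew-transparent center statement for the surface case.
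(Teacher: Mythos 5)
Your reduction to a single strand passage and the localization are fine in spirit, and your treatment of the arc case is essentially the paper's argument: for an $\cN$-arc $a$ one has $\Phi_\xi(a)=a^{(N)}$, a genuine tangle of $N$ parallel copies, and resolving the $N$ crossings with the passing strand (using the skein relation together with the trivial arc and reordering relations) leaves a single surviving term with coefficient $\xi^{\pm N}$, whence the factor $\xi^{2N}$. But the knot case contains a genuine gap. A strand of $T_t$ that passes transversally through a knot component $\al$ of $T$ cannot be isotoped into the thickened annulus $M'\cong \Sigma\times(-1,1)$ that you build around $\al$: it punctures that thickened surface rather than lying in it, so it is not an element of $\cS_\xi(\Sigma,\cP)$ and the Muller-algebra identity $T_N(\al)=K^N+K^{-N}+E^N$ of Corollary \ref{c.32} says nothing about it. Centrality in the skein algebra of a surface only controls strands that live in the same cylinder (i.e.\ the difference between the two stackings $x\,T_N(\al)$ and $T_N(\al)\,x$); transparency in a $3$-manifold requires passing an arbitrary transversal strand through the linear combination $\sum c_j\al^{(j)}$, where, unlike the arc case, no trivial arc relation kills the extra smoothings. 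This is precisely the hard computation that the paper does not redo: it invokes the main theorem of \cite{Le2} for unmarked $3$-manifolds and transports it by functoriality. Your phrase ``each summand $\ve$-commutes with every element in the image of the Frobenius map'' is beside the point, since the passing strand is not in the image of $F_N$.

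Case (b) is the second gap: ``careful bookkeeping of signs'' in the full expansion of Proposition \ref{p.prop31} is not an argument, and in fact no such expansion is needed. In the paper both signs come out of the same local computation: for arcs the surviving coefficient is $\xi^{N}$ versus $\xi^{-N}$, so one gets $+1$ or $-1$ according as $\xi^{2N}=\pm1$; for knots both the transparent and skew-transparent statements are already in \cite{Le2}. Also note a small confusion: in Lemma \ref{r.markedannulus} the relevant commutation is $YX=\xi^4XY$, so Corollary \ref{c.32} applies with $q^2=\xi^4$ of order $N$ in \emph{both} cases $\xi^{2N}=\pm1$; the dichotomy $\ord(\xi^2)=N$ versus $2N$ does not enter there the way you suggest.
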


\begin{proof}

(a) and (b) are proven in \cite{Le2} for the case where $\cN = \emptyset$. That is, given an $\emptyset$-tangle $T$, it is shown that $\Phi_\xi(T)$ is (skew-)transparent in $\cS_\xi(M,\emptyset)$ where we necessarily have that all components of $T$ are knots. By functoriality, $\Phi_\xi(T)$ is (skew-)transparent in $\cS_\xi\MN$ as well when all components of $T$ are knots.

We show that $\Phi_\xi(a)$ is (skew-)transparent when $a$ is a $\cN$-arc. Let $T_1,T_2$ be $\cN$-isotopic $\cN$-tangles connected by a single $a$-pass $\cN$-isotopy $T_t$. Consider a neighborhood $U$ consisting of the union of a small tubular neighborhood of $a$ and a small tubular neighborhood of $T_t$. We may assume that the strands of $\Phi_\xi(a)=a^{(N)}$ are contained in the tubular neighborhood of $a$, and furthermore that $T_t$ is a single $a_i$-pass $\cN$-isotopy for each component $a_i$ of $a^{(N)}$. Write $\cQ = \cN \cap U$. Then both $a^{(N)} \cup T_1$ and $a^{(N)} \cup T_2$ are $(U,\cQ)$-tangles and we apply the skein relation and trivial arc relation inductively in $\cS_\xi(U,\cQ)$ to derive the equations in Figure \ref{fig:center4}.

\FIGc{center4}{Resolving crossings between $a^{(N)}$ and $T_1,T_2$ in $\cS_\xi(U,\cQ)$.}{3.5cm}

By functoriality, the computation in $\cS_\xi(U,\cQ)$ is true in $\cS_\xi\MN$ as well. We see from Figure \ref{fig:center4} that $\Phi_\xi(a)$ is transparent if and only if $\xi^{N}=\xi^{-N}$, i.e. that $\xi^{2N}=1$. We also see that $\Phi_\xi(a)$ is skew-transparent if and only if $\xi^{N}=-\xi^{-N}$, i.e. that $\xi^{2N}=-1$.
\end{proof}

\section{Center of the skein algebra for $q$ not a root of unity}\lbl{sec.reducedskein}
\def\siD{\overline{\D}_\inn}
\def\siP{\overline{P}_\inn}
\def\reS{\overline{\cS}}
\def\reX{\overline{\sX}}
\def\revpD{\overline{\vpD}}
\def\reNf{\overline{\fN}}
\def\renf{\overline{\mathfrak{n}}}
\def\oP{\overline{P}}
Throughout this section $R$ is a commutative domain with a distinguished invertible element $q^{1/2}$ and $\SM$ is a marked surface ($R$ is no longer required to be Noetherian in this section). We write $\cS$ for the skein algebra $\cS\SM$ defined over $R$. We will calculate the center of $\cS\SM$ for the case when $q$ is not a root of unity.

\subsection{Center of the skein algebra} Let $\Hu$ denote the set of all unmarked components in $\pS$ and $\Hm$ the set of all marked components.  If $\beta\in \Hu$ let $z_\beta=\beta$ as an element of $\cS$. If $\beta\in \Hm$ let 
$$ z_\beta = \left [ \prod a \right] \in \cS,$$
where the product is over all boundary $\cP$-arcs in $\beta$.

\begin{theorem}
\lbl{thm.center1}
Suppose $\SM$ is a  marked surface. Assume that $q$ is not a root of unity. Then the center $Z(\cS\SM)$ of $\cS\SM$ is the $R$-subalgebra generated by $\{z_\beta \mid  \beta \in\Hu\cup \Hm\}$.
\end{theorem}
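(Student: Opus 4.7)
The plan is to use the Muller-type embedding $\vpD \colon \cS\SM \hookrightarrow \sX(\D)$ from Theorem~\ref{r.torus} (fix a quasitriangulation $\D$; the finitely many non-quasitriangulable marked surfaces, having very small skein algebras, can be checked directly) to reduce the problem to a computation inside a quantum torus. First I would verify that each $z_\beta$ is central. For $\beta \in \Hu$, $z_\beta = \beta$ lies in $R[\Hu]$, the coefficient ring of $\sX(\D)$, hence is trivially central. For $\beta \in \Hm$ with boundary arcs $a_1,\dots,a_k$ in cyclic order, $\vpD(z_\beta) = X^{\bn_\beta}$ where $\bn_\beta \in \BN^\D$ is the indicator vector on those boundary arcs; at each marked point $p$ on $\beta$, the two boundary arcs meeting at $p$ occupy the two extreme positions in the clockwise linear ordering of half-edges at $p$ (flanking all interior half-edges), so for any other edge $b$ the two contributions to $P(\bn_\beta, b)$ at $p$ cancel. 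Hence $\bn_\beta \in \ker P$, so $X^{\bn_\beta}$ is central in $\sX(\D)$ and $z_\beta \in Z(\cS\SM)$.

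For the converse, I would show $Z(\cS\SM) = \cS\SM \cap Z(\sX(\D))$: since $\sX_+(\D) \subset \cS\SM$ and each generator $X_a$ is invertible in $\sX(\D)$, the subalgebra $\cS\SM$ weakly generates $\sX(\D)$, so any element of $\cS\SM$ that commutes with $\cS\SM$ must commute with all of $\sX(\D)$. Because $q$ is not a root of unity, analyzing the $q$-commutation in the $R[\Hu]$-basis $\{X^\bk\}_{\bk \in \BZ^\D}$ (whenever $x = \sum c_\bk X^\bk$ commutes with $X_a$, each nonzero $c_\bk$ forces $q^{\langle \bk, e_a\rangle_P} = 1$) identifies $Z(\sX(\D))$ as the $R[\Hu]$-span of the central normalized monomials $\{X^\bk : \bk \in \ker P\}$.

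The main step, and the chief technical obstacle, is the key lemma $\ker P = \bigoplus_{\beta \in \Hm} \BZ \bn_\beta$. The forward inclusion was obtained above. For the converse, given $\bn \in \ker P$ I would subtract a suitable integer combination $\sum_\beta n_\beta \bn_\beta$ (with $n_\beta = \bn(a^\beta)$ for some chosen boundary arc $a^\beta \subset \beta$) to arrange that $\bn$ vanishes on one boundary arc of each marked boundary, then perform a local analysis at each marked point: expressing the condition $P(\bn, \cdot) = 0$ in terms of the half-edge weights $w_i = \bn(e(h_i))$ in clockwise linear order at $p$ yields rigid constraints that, propagated across the triangulation, force $\bn = 0$ on all remaining edges. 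The monogon case requires a separate observation: the two half-edges of a monogon edge $a_\beta$ are adjacent in the local linear order at their shared vertex (since the monogon interior contains no other half-edges), which produces an uncancellable pair of contributions forcing $\bn(a_\beta) = 0$. Given the lemma, any $x \in \cS\SM \cap Z(\sX(\D))$ has the form $\sum c_{\bm, \bk} \Hu^\bm \prod_\beta z_\beta^{n_\beta(\bk)}$ with $n_\beta(\bk) \in \BZ$, the factors $z_\beta$ from distinct boundary components commuting as they share no vertex; invoking Lemma~\ref{r.monomial}(ii), every element of $\cS\SM$ becomes an element of $\sX_+(\D)$ after multiplication by some $\D_\inn$-monomial, and since such a monomial cannot clear negative exponents on boundary arcs, each $n_\beta(\bk)$ must be $\ge 0$, placing $x$ in $R[z_\beta : \beta \in \Hu \cup \Hm]$ as required.
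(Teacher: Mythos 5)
Your overall architecture coincides with the paper's: embed $\cS\SM$ in the quantum torus $\sX(\D)$, use weak generation to get $Z(\cS\SM)=\cS\SM\cap Z(\sX(\D))$, compute $Z(\sX(\D))$ from the $\BZ^\D$-grading together with the hypothesis that $q$ is not a root of unity, and reduce everything to the key lemma $\ker P=\bigoplus_{\beta\in\Hm}\BZ\,\bk_\beta$. Two of your refinements are welcome: your vertex-matrix verification that $z_\beta$ is central (the two boundary half-edges at each $p\in\beta$ flank all interior half-edges) is the combinatorial shadow of the paper's reordering-relation argument in Lemma \ref{l.zbetacentral}; and your final step, ruling out negative powers of $z_\beta$ via Lemma \ref{r.monomial}(ii) (a $\D_\inn$-monomial cannot clear negative exponents on boundary edges), is actually more explicit than the paper's, which simply asserts that $Z(\sX(\D))$ lands inside $\cS\SM$.

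The genuine gap is the converse inclusion $\ker P\subset\bigoplus_\beta\BZ\,\bk_\beta$. Your plan --- normalize $\bn$ by subtracting multiples of the $\bk_\beta$, then assert that ``rigid constraints propagated across the triangulation force $\bn=0$'' --- is not yet an argument. The condition $\langle\bn,\cdot\rangle_P=0$ is not local: at a vertex $p$ with half-edge weights $w_1,\dots,w_m$ in clockwise order, a half-edge of $b$ at position $i$ contributes $\sum_{j<i}w_j-\sum_{j>i}w_j$, and only the \emph{sum over the two endpoints of $b$} must vanish, so the constraints couple vertices across every edge and the ``propagation'' is precisely the content of the lemma (your monogon observation suffers the same incompleteness: the $\pm 2\,\bn(a_\beta)$ contribution is mixed with contributions from all other half-edges at that vertex). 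The paper sidesteps this by a rank count in Lemma \ref{r.kerPZm}: the $\bk_\beta$ have pairwise disjoint supports, so $\mathrm{Null}(P)\ge|\Hm|$; for the reverse inequality it adds a marked point to each unmarked component, shows each addition raises the nullity by at least one, and quotes the totally marked case $\mathrm{Null}(P)=|\Hm|$ from \cite[Lemma 4.4(b)]{Le3}. To complete your route you would either have to carry out the local analysis in full (in effect reproving that cited lemma) or import such a nullity computation.
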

\begin{proof}
It is easy to verify Theorem \ref{thm.center1} for the few  cases of  non quasi-triangulable surfaces. We will from now on assume that $\SM$ is quasitriangulable, and fix a quasitriangulation $\D$ of $\SM$.

We write $Z(A)$ to denote the center of an algebra $A$. 
\begin{lemma}\lbl{r.weakcenters}
Let $A \subset B$ be $R$-algebras such that $A$ weakly generates $B$. Then $Z(A) \subset  Z(B)$.
\end{lemma}

\begin{proof}
 Let $x \in Z(A)$. Then $x$ commutes with elements of $A$, and hence with their inverses in $B$ if the inverses exist.  So $x \in Z(B)$.
\end{proof}

 Since $\sX_+(\D)$ weakly generates $\XD$, and $\sX_+(\D) \subset \cS \subset \XD$ by
 Theorem \ref{r.torus}, we have 
\begin{corollary}\lbl{l.centerinclusions}
One has   
$Z(\cS) \subset Z(\XD) $. Consequently $Z(\cS) =  Z(\XD) \cap  \cS$.
\end{corollary}

\begin{lemma}\lbl{l.zbetacentral} For all $\beta \in\Hu\cup \Hm$, one has
$z_\beta \in Z(\cS) \subset Z(\sX(\D))$.
\end{lemma}

\begin{proof} It is clear that $z_\beta \in Z(\cS)$ if $\beta\in \Hu$.
Let $\beta \in \Hm$.
Any  $\cP$-knot $\al$ can be isotoped away from the boundary, and therefore $z_\beta \al = \al z_\beta$.
Let $a \in \cS$ be a $\cP$-arc. If $a$ does not end at some $p \in \beta \cap \cP$ then $az_\beta=z_\beta a$ is immediate. Assume that $a$ has an end at $p \in \beta \cap \cP$. $\cP$-isotope $a$ so that its interior does not intersect $\pS$. Then in the support of $z_\beta$ there is one strand clockwise to $a$ at $p$ and one counterclockwise. Therefore $az_\beta = z_\beta a$ by the reordering relation in Figure \ref{fig:boundary}.
Since $\cS$ is generated as an $R$-algebra by $\cP$-arcs and $\cP$-knots, we have $z_\beta \in Z(\cS)$.
\end{proof}

 For each $\beta \in \Hm$, we define $\bk_\beta \in \BZ^\D$ so that $z_\beta= X^{\bk_\beta}$. In other words,
\be\lbl{e.bkbeta}
\bk_\beta(a) = \left\{ \begin{array}{ll}
1 & \text{if }a \subset \beta, \\
0 & \text{otherwise.} \end{array} \right.
\ee

\def\XpD{\sX_+(\D)} 
We write $P$ for the vertex matrix of $\D$ (see Subsection \ref{sec.vmatrix}).

\begin{lemma}\lbl{l.xcenter}
One has   $Z(\sX(\D)) =  R[\Hu][X^\bk \mid  \bk \in \ker P]$.

\end{lemma}

\begin{proof}

Recall that $\sX(\D)$ is a $\BZ^\D$-graded algebra given by
$$
\sX(\D) = \bigoplus_{\bk \in \BZ^\D} R[\Hu] \cdot X^\bk.
$$
The center of a graded algebra is the direct sum of the centers of the homogeneous parts. Hence
$$
Z(\sX(\D)) = \bigoplus_{\bk \in \BZ^\D: X^\bk \text{ is central}} R[\Hu] \cdot X^\bk.
$$
By the commutation relation \eqref{e.normalizedtorus} we have $X^\bk X^\bl = q^{\langle \bk, \bl \rangle_P} X^\bl X^\bk$. Thus $X^\bk$ is central if and only if $q^{\langle \bk, \bl \rangle_P} = 1$ for all $\bl \in \BZ^\D$. Since $q$ is not a root of unity, this is true if and only if $\langle \bk, \bl \rangle_P = 0$ for all $\bl\in \BZ^\D$. Equivalently, $\bk \in \ker P$. This proves the lemma.
\end{proof}

\begin{lemma}\lbl{r.kerPZm}
The kernel $\ker P$ is the free $\BZ$-module with basis $\{\bk_\beta \mid  \beta \in \Hm\}$.
\end{lemma}

\begin{proof}  Let $\text{Null}(P)$ be the nullity of $P$.

Lemmas \ref{l.zbetacentral} and \ref{l.xcenter} imply that $\bk_\beta \in \ker P$ for each $\beta \in \Hm$. Since the $\bk_\beta$'s, as functions from $\D$ to $\BZ$, have pairwise disjoint supports, the set $\{\bk_\beta \mid  \beta \in \Hm\}$ is $\BQ$-linear independent. In particular, 
\be 
\lbl{eq.hh}
\text{Null}(P) \ge |\Hm|.
\ee

{\em Claim 1.} Assume that $\beta\in \Hu$ is an unmarked boundary component. Choose a point $p_\beta \in \beta$ and let $\cP'=\cP \cup \{p_\beta\}$. Then let $\D'$ be an extension of $\D$ to a $\cP'$-quasitriangulation as depicted in Figure \ref{fig:holepoint} (this guarantees that $\D \subset \D'$), and $P'$ the associated vertex matrix. Then $\text{Null}(P') \geq \text{Null}(P) + 1$.

{\em Proof of Claim 1.} Consider $\BZ^\D \subset \BZ^{\D'}$ via extension by zero. Choose a $\BZ$-basis $B$ of $\ker P$. Then because the $\D \times \D$ submatrix of $P'$ equals $P$, one has $B\subset \ker P'$. Let $\bk_\beta \in \BZ^{\D'}$ be as given in \eqref{e.bkbeta} with $\D$ replaced by $\D'$. Then $\bk_\beta \in \ker P'$.
Since $\bk_\beta$ does not have support in $\D$, $\bk_\beta$ is $\BZ$-linearly independent of $B$. Therefore $\text{Null}(P')$ must be at least 1 greater than $\text{Null}(P)$. This completes the proof of Claim 1.

{\em Claim 2.} One has $\text{Null}(P) = |\Hm|$.

{\em Proof of Claim 2.} 
By \cite[Lemma 4.4(b)]{Le3}, the claim is true if  $\SM$ is totally marked, i.e. if $\Hu=\emptyset$. 

Suppose $|\Hu|=k$. By sequentially adding marked points to unmarked components in $\Hu$ and extending the triangulation as in Claim 1, we get a totally marked surface $(\Sigma, \cP^{(k)})$ with a new vertex matrix $P^{(k)}$. From Claim 1 we have $\text{Null}(P^{(k)}) \ge \text{Null}(P) + k$. On the other hand since $(\Sigma, \cP^{(k)})$ is totally marked and having $|\Hm|+k$ boundary components, we have $\text{Null}(P^{(k)})= |\Hm|+k$. It follows that $|\Hm|   \ge \text{Null}(P)$. Together with \eqref{eq.hh} this shows $\text{Null}(P) = |\Hm|$, completing the proof of Claim 2.

Claim 2 and the fact that  $\{\bk_\beta \mid  \beta \in \Hm\}$ is a $\BQ$-linear independent subset of $\ker P$ shows that  $\{\bk_\beta \mid  \beta \in \Hm\}$ is a $\BQ$-basis of $\ker P$. Let us show that $\{\bk_\beta \mid  \beta \in \Hm\}$ is a $\BZ$-basis of $\ker P$.

Let $x \in \BZ^\D$ be in $\ker P$. Since $\{\bk_\beta \mid  \beta \in \Hm\}$ is $\BQ$-basis of $\ker P$, we have
$$
x = \sum_{\beta \in \Hm} c_\beta \bk_\beta, \quad c_\beta \in \BQ.
$$
Since $\bk_\beta$'s are functions from $\D$ to $\BZ$, have pairwise disjoint supports, and $x:\D \to \BZ$ has integer values, each $c_\beta$ must be an integer. Hence $x$ is a $\BZ$-linear combination of $\{\bk_\beta \mid  \beta \in \Hm\}$. This shows that $\{\bk_\beta \mid  \beta \in \Hm\}$ is a $\BZ$-basis of $\ker P$.
\end{proof}
Lemmas \ref{l.xcenter} and \ref{r.kerPZm} show that $Z(\XD)= R[\Hu][X^{\bk_\beta} \mid \beta \in \Hm]$, which is a subset of $\cS$.  By Corollary~\ref{l.centerinclusions}, we also have $Z(\cS)= Z(\XD) \cap \cS= R[z_\beta | \beta \in \Hu \cup \Hm]$, completing the proof.
\end{proof}


\end{document}